\theoremstyle:=definition,remark,plain\do{%
      \expandafter\g@addto@macro\csname th@\theoremstyle\endcsname{%
        \addtolength\thm@preskip\parskip
        }%
      }
    \def\tagform@#1{\maketag@@@{%
     \textbf{(\ignorespaces#1\unskip\@@italiccorr)}}}%
     \renewcommand{\eqref}[1]{\textup{\maketag@@@{(\ignorespaces%
          {\ref{#1}}\unskip\@@italiccorr)}}}
  \newcommand\address[1]{}
  \newcommand\email[1]{}
  \newcommand\dedicatory[1]{}
  \theoremstyle{plain}
  \newtheorem{theorem}[equation]{Theorem}
  \newtheorem{proposition}[equation]{Proposition}
  \newtheorem{corollary}[equation]{Corollary}
  \newtheorem{lemma}[equation]{Lemma}
  \theoremstyle{definition}
  \newtheorem{remark}[equation]{Remark}
  \numberwithin{equation}{section}
  \newcommand{\set}[1]{\ensuremath{\left\{ {#1} \right\}}\xspace} 
  \newcommand{\abs}[1]{\ensuremath{\left\lvert {#1} \right\rvert}\xspace} 
  \newcommand{\norm}[1]{\ensuremath{\left\lVert {#1} \right\rVert}\xspace} 
  \newcommand{\st}{\ensuremath{\,\, \colon \,\,}\xspace} 
  \newcommand{\from}{\ensuremath{\colon \thinspace}\xspace} 
  \DeclareMathOperator{\area}{Area}
  \DeclareMathOperator{\height}{height}
  \DeclareMathOperator{\width}{width}
  \DeclareMathOperator{\id}{id}
  \DeclareMathOperator{\polyspec}{Poly}
  \DeclareMathOperator{\hol}{hol}
  \newcommand{\Z}{\ensuremath{\mathbb{Z}}\xspace}
  \newcommand{\R}{\ensuremath{\mathbb{R}}\xspace}
  \newcommand{\CC}{\ensuremath{\mathbb{C}}\xspace}
  \newcommand{\SL}{\ensuremath{\mathrm{SL}(2,\R)}\xspace}
  \newcommand{\RP}{\R \mathrm{P}^1}
  \newcommand{\scc}{\ensuremath{\mathcal{S}}\xspace} 
  \newcommand{\T}{\ensuremath{\mathcal{T}(S)}\xspace} 
  \newcommand{\mcg}{\ensuremath{\text{MCG}}\xspace}  
  \newcommand{\MF}{\ensuremath{\mathcal{MF}}\xspace} 
  \newcommand{\PMF}{\ensuremath{\mathcal{PMF}}\xspace} 
  \newcommand{\QD}{\ensuremath{\text{QD}}\xspace} 
  \newcommand{\Qk}{\ensuremath{\text{Q}(\kappa)}\xspace} 
  \newcommand{\QMod}{\ensuremath{\text{MQD}}\xspace} 
  \newcommand{\QkMod}{\ensuremath{\text{MQ}(\kappa)}\xspace} 
  \newcommand{\orb}{\ensuremath{\mathcal{O}}\xspace}
  \newcommand{\poly}{\ensuremath{\mathrm{P}}\xspace}
  \newcommand{\aim}{\ensuremath{\mathrm{M}}\xspace}
  \newcommand{\aimup}{\ensuremath{\widetilde{\mathrm{M}}}\xspace}
  \newcommand{\cyl}{\ensuremath{\mathsf{cyl}}\xspace}
  \newcommand{\hcyl}{\ensuremath{\widehat{\mathsf{cyl}}}\xspace}
  \newcommand{\convexspace}{\ensuremath{\mathscr{K}_0}\xspace}
  \newcommand{\cont}{\ensuremath{\mathscr{C}}\xspace}
  \newcommand{\lsl}{\ensuremath{l^{\,\mathrm{SL}}\xspace}}
  \newcommand{\VT}{\ensuremath{\text{VT}}\xspace} 
  \newcommand{\PA}{\ensuremath{\text{PA}}\xspace} 
  \newcommand{\IL}{\ensuremath{\text{IL}}\xspace} 
  \newcommand{\sv}{\ensuremath{\lambda}\xspace}
  \newcommand{\ecc}{\ensuremath{\mathrm{ecc}}\xspace} 
  \newcommand{\param}{{\mathchoice{\mkern1mu\mbox{\raise2.2pt\hbox{$
  \centerdot$}}
  \mkern1mu}{\mkern1mu\mbox{\raise2.2pt\hbox{$\centerdot$}}\mkern1mu}{
  \mkern1.5mu\centerdot\mkern1.5mu}{\mkern1.5mu\centerdot\mkern1.5mu}}}
\begin{document}

  \title    {Veech surfaces and simple closed curves} \author   {Max
  Forester, Robert Tang, and Jing Tao} \date{}

  \maketitle \thispagestyle{empty}

  \begin{abstract} 

    We study the $\SL$--infimal lengths of simple closed curves on
    half-translation surfaces. Our main result is a characterization of
    Veech surfaces in terms of these lengths. 

    We also revisit the ``no small virtual triangles'' theorem of Smillie
    and Weiss and establish the following dichotomy: the virtual triangle
    area spectrum of a half-translation surface either has a gap above zero
    or is dense in a neighborhood of zero. 
    
    These results make use of the \emph{auxiliary polygon} associated to a
    curve on a half-translation surface, as introduced by Tang and Webb.
    
  \end{abstract}
  
\section{Introduction}
  
  Let $S$ be a surface of genus $g$ with $p$ marked points and let $\QD(S)$
  be the space of quadratic differentials on $S$. Each element $q \in
  \QD(S)$ naturally endows $S$ with a locally Euclidean metric with
  isolated conical singularities and linear holonomy restricted to
  $\set{\pm \id}$. We also refer to elements of $\QD(S)$ as
  half-translation surfaces. There is a natural action of $\SL$ on $\QD(S)$
  preserving the signature of the singularities. A half-translation surface
  $q$ is called a \emph{Veech surface} if its group of (derivatives of)
  affine self-diffeomorphisms is a lattice in $\SL$. Veech surfaces possess
  remarkable dynamical properties akin to flat tori, and arise naturally in
  the contexts of rational billiards and Teichm\"uller curves in moduli
  space \cite{Vee89}.
  
  In this paper, we study $\QD(S)$ from the point of view of simple closed
  curves on $S$. On a half-translation surface $q$, a simple closed curve
  $\alpha$ either has a unique geodesic representative or there is a
  maximal flat cylinder on $q$ foliated by closed geodesics in the homotopy
  class of $\alpha$. In the former case, the geodesic representative
  $\alpha^q$ of $\alpha$ is a concatenation of saddle connections. We say
  that $\alpha$ is a \emph{crooked curve} on $q$ if $\alpha^q$ has at least
  two saddle connections whose associated holonomy vectors are not
  parallel. We define the $\SL$--\emph{infimal length} of $\alpha$ on $q$
  to be
  \[
    \lsl_\alpha(q) = \inf_{q' \in \, \SL \, \cdot \, q} l_{\alpha}(q'),
  \]
  where $l_\alpha(q)$ denotes the geodesic length of $\alpha$ on $q$. A
  curve $\alpha$ is crooked on $q$ if and only if $\lsl_\alpha(q)$ is
  positive (see Proposition \ref{polygon}). Our first main result is a
  characterization of Veech surfaces in terms of their $\SL$--infimal
  length spectra.
  
  \begin{theorem}\label{nscc}

   Let $q$ be a half-translation surface. Then $q$ is a Veech surface if
   and only if it has no short crooked curves: there is an $\epsilon >0$
   such that $\lsl_\alpha(q) \geq \epsilon$ for every crooked curve
   $\alpha$.

  \end{theorem}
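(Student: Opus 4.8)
The plan is to establish the two implications by rather different arguments. I first note that crookedness is an $\SL$--invariant property of a curve: if $\alpha^q$ is a concatenation of saddle connections whose holonomy vectors are not all parallel, then by transitivity of parallelism some two \emph{consecutive} saddle connections of $\alpha^q$ have non-parallel holonomies; hence the affine image of $\alpha^q$ on any $q' \in \SL \cdot q$ is a closed geodesic with a genuine corner, so the class of $\alpha$ supports no flat cylinder on $q'$, and by the dichotomy recalled above this image is exactly $\alpha^{q'}$. Thus $\alpha$ is crooked on $q$ iff it is crooked on every $q' \in \SL \cdot q$, and, writing $f(q')$ for the length of a shortest crooked curve on $q'$,
\[
  \inf_{\alpha\ \mathrm{crooked}} \lsl_\alpha(q) \;=\; \inf_{q' \in \SL \cdot q} f(q');
\]
a limiting argument sketched below will upgrade the right-hand side to $\inf_{q' \in \overline{\SL \cdot q}} f(q')$.

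For the forward implication, suppose $q$ is a Veech surface and let $\alpha$ be crooked. On any $q' \in \SL \cdot q$ write $\alpha^{q'} = \sigma_1 \cup \dots \cup \sigma_n$ and choose consecutive $\sigma_i, \sigma_{i+1}$ with non-parallel holonomy vectors $v_i, v_{i+1}$; these saddle connections share a cone point, and $\tfrac12 \lvert v_i \wedge v_{i+1} \rvert$ is the area of the virtual triangle they determine. By the ``no small virtual triangles'' theorem of Smillie and Weiss, whose conclusion is $\SL$--invariant, this area is bounded below by some $\delta > 0$ uniformly over $\SL \cdot q$, and therefore
\[
  l_\alpha(q') \;\ge\; \lvert v_i \rvert + \lvert v_{i+1} \rvert \;\ge\; 2\sqrt{\lvert v_i \wedge v_{i+1} \rvert} \;\ge\; 2\sqrt{2\delta}.
\]
Taking the infimum over $q'$ gives $\lsl_\alpha(q) \ge 2\sqrt{2\delta}$, independently of $\alpha$. (Alternatively one can argue via Proposition \ref{polygon}, using that the auxiliary polygon of a crooked curve contains a virtual triangle of comparable area while $\lsl_\alpha(q)$ is comparable to the square root of the polygon's area.)

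For the reverse implication, suppose $q$ is not a Veech surface, so $\SL \cdot q$ is not closed in its stratum. One route is to invoke the structure theory of $\SL$--orbit closures: by Eskin--Mirzakhani--Mohammadi, $\mathcal{M} := \overline{\SL \cdot q}$ is an affine invariant submanifold, and since the orbit is not closed $\dim \mathcal{M} \ge 4$. To justify the equality $\inf_\alpha \lsl_\alpha(q) = \inf_{q' \in \mathcal{M}} f(q')$ claimed above, observe that for $q^* \in \mathcal{M}$ with $g_n \cdot q \to q^*$, a shortest crooked curve on $q^*$ pulls back through the approximating markings to crooked curves $\alpha_n$ on $q$ with $\lsl_{\alpha_n}(q) \le l_{\alpha_n}(g_n \cdot q) \to f(q^*)$, the reverse inequality being clear. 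It then suffices to produce surfaces in $\mathcal{M}$ with arbitrarily short crooked curves. For this I would use the density of periodic directions in $\mathcal{M}$ together with Wright's cylinder deformation theorem: the extra room afforded by $\dim \mathcal{M} > 3$ should let one twist and then collapse a suitable sub-collection of cylinders of a cylinder decomposition, staying within $\mathcal{M}$, so as to bring cone points on opposite sides of a shrinking cylinder into vertical alignment; on nearby surfaces of $\mathcal{M}$ this produces two short non-parallel saddle connections, which one splices into a simple closed curve $\alpha$ that is then crooked and has $\lsl_\alpha(q)$ small, the auxiliary polygon underlying Proposition \ref{polygon} serving to control that last estimate.

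The step I expect to be the main obstacle is this construction in the reverse direction. A purely cylindrical degeneration of $\mathcal{M}$ produces only short cylinder curves --- an essential curve carried by saddle connections that are short near a collapsing cylinder is confined to an annular region, hence isotopic to the core, so not crooked; and a small embedded flat triangle bounds a disk, so its boundary is inessential. One must therefore genuinely exploit non-Veechness, encoded in the positive-dimensional family of degenerations available to $\mathcal{M}$, to find a degeneration that pinches $\mathcal{M}$ in a topologically entangled way and produces an \emph{essential}, simple, crooked curve of vanishing length; verifying that the manufactured saddle connections close up with all three properties while remaining inside $\mathcal{M}$, with the auxiliary polygon tracking crookedness and $\SL$--infimal length, is the crux. (Making precise the transport of homotopy classes under $g_n \cdot q \to q^*$ used above is a more routine, if delicate, matter.)
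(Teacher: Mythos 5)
Your forward implication is essentially correct, and it is in fact a bit more direct than the paper's: you pick two consecutive non-parallel saddle connections in $\alpha^{q'}$ and bound $l_\alpha(q')$ below by the Smillie--Weiss virtual-triangle gap, which is $\SL$--invariant since wedge products are unchanged under determinant-one maps. The paper reaches the same conclusion by first proving discreteness of the whole spectrum $\polyspec(q)$ (Theorem \ref{discreteness}, via Lemma \ref{area-sum}) and then invoking Proposition \ref{polygon}\ref{tw3}; both arguments rest on the same Smillie--Weiss input, so this half is fine.

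The reverse implication, however, has a genuine gap, and it is exactly the one you flag yourself: you never actually produce surfaces in $\mathcal{M}=\overline{\SL\cdot q}$ with short \emph{crooked} curves. Your proposed mechanism (density of periodic directions plus Wright's cylinder deformations, then splicing two short non-parallel saddle connections into a curve) is precisely the naive approach the paper warns against before Proposition \ref{notveech-dense}: short saddle connections produced by collapsing cylinders tend to give cylinder cores or inessential curves, and there is no argument in your sketch that the spliced curve is simple, essential, and crooked, nor that the deformation stays where you need it. The paper's proof avoids any such construction. Since $q$ is not Veech, Corollary \ref{veechcor} (the only place the Eskin--Mirzakhani--Mohammadi theorem is used, to get local path connectedness of the orbit closure) gives a point $q'\in\aimup_q-\orb_q$; the Duchin--Leininger--Rafi rigidity result (Proposition \ref{rigidity}) then produces a \emph{single} topological curve $\alpha\in\hcyl(q)-\hcyl(q')$, so $\area(P_\alpha(q))=0$ while $\area(P_\alpha(q'))=a>0$. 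Continuity of $q\mapsto \area(P_\alpha(q))$ (Corollary \ref{areacontinuity}, which rests on the Hausdorff-continuity of the auxiliary polygon proved in Theorem \ref{polygoncontinuity}) plus connectedness of $\aimup_q$ gives every value in $[0,a]$ on $\aimup_q$; density of $\orb_q$ and the $\SL$-- and mapping-class--invariance of the spectrum pull a dense set of small positive values back to $q$ itself, and Proposition \ref{polygon}\ref{tw3} converts these into arbitrarily short crooked curves. Neither the rigidity input nor the continuity of the polygon appears in your proposal (indeed your reduction to $\inf_{q^*\in\mathcal{M}} f(q^*)$ already tacitly needs a continuity statement of the type of Proposition \ref{infimalcontinuity}), and the ``$\dim\mathcal{M}\ge 4$'' and periodic-direction remarks do not address the crux you correctly identified; as written, the backward direction is an unproven program rather than a proof.
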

  
  This result is reminiscent of the no-small-(virtual)-triangles theorem
  due to Smillie and Weiss \cite{SW10}. They characterize Veech surfaces as
  the half-translation surfaces which possess a positive lower bound on the
  areas of Euclidean triangles on $q$ with edges formed by saddle
  connections. One advantage of working with simple closed curves is that
  they are topological objects; they do not depend on the half-translation
  structure, and therefore we can study them at once over the entire
  quadratic differential space. In contrast, a saddle connection on $q$
  persists only in an open subset of the relevant stratum
  of $\QD(S)$. 
  
  One of the main tools we use in this paper is the \emph{auxiliary
  polygon} $P_\alpha(q)$ associated to a simple closed curve $\alpha$ on
  $q$, as introduced by the second author and Webb in \cite{TW15}. The area
  of $P_\alpha(q)$ gives an estimate for $\lsl_\alpha(q)^2$ up to bounded
  multiplicative error (see Proposition \ref{polygon}\ref{tw3}). It follows
  that Theorem \ref{nscc} is equivalent to the statement that $q$ is a
  Veech surface if and only if the \emph{polygonal area spectrum} 
  \[
    \polyspec(q) = \set{\area(P_\alpha(q)) \st \alpha \text{ is a simple
    closed curve}} 
  \] 
  has a gap above zero. In fact, our arguments will prove a slightly
  stronger statement: either there is a gap (exactly when $q$ is a Veech
  surface), or this spectrum is dense in a neighborhood $[0,a)$ for some $a
  > 0$. This statement also holds for the \SL--infimal length spectrum.
  
  The forward implication of Theorem \ref{nscc} will follow from a
  relatively straightforward application of the no-small-virtual-triangles
  theorem.  
  
  The reverse implication of Theorem \ref{nscc} will make use of the
  auxiliary polygon mentioned above, as well as two other ingredients: the
  orbit closure theorem of Eskin, Mirzakhani, and Mohammadi \cite{EMM15}
  and a rigidity statement for $\SL$--orbits of quadratic differentials due
  to Duchin, Leininger, and Rafi \cite{DLR10}. We remark that the orbit
  closure theorem is used only to deduce local path connectedness of
  $\SL$--orbit closures in strata of half-translation surfaces. 

  To prove the reverse implication we use an elementary continuity
  argument; see Proposition \ref{notveech-dense}. An essential step is 
  establishing that the auxiliary polygon $P_\alpha(q)$ is continuous in
  $q$, with respect to the Hausdorff topology. This is achieved in several
  steps. By continuity of the intersection pairing between measured
  foliations on $S$, we obtain a continuous function {$\QD^1(S) \to
  \cont(\RP, \R)$ of the form $q \mapsto i\big(\nu_q^{\frac{\pi}{2}
  +\theta}, \alpha\big)$ for any fixed curve $\alpha$ on $S$. The value
  $i\big(\nu_q^{\frac{\pi}{2} +\theta}, \alpha\big)$ coincides with the
  \emph{width} of the auxiliary polygon in direction $\theta$, and so the
  \emph{width function} $w_{P_\alpha(q)}\in \cont(\RP,\R)$ of the polygon
  is continuous in $q$. Finally, standard results from convex geometry on
  centrally symmetric sets yield continuity of $P_\alpha(q)$ itself. 

  As a further application of continuity of the auxiliary polygon, we
  show that $\SL$--infimal length is continuous in $q$; see Proposition
  \ref{infimalcontinuity}.  

  Next consider the \emph{virtual triangle area spectrum}, defined as
  follows: 
  \[
    \VT(q) = \set{ \, \abs{u \wedge v} \st u, v \in \hol(q)} 
  \]
  where $\hol(q)$ is the set of holonomy vectors of saddle
  connections in $q$. The no-small-virtual-triangles theorem of
  \cite{SW10} states that $\VT(q)$ has a gap above zero if and only if
  $q$ is a Veech surface. In our second theorem we show further that
  $\VT(q)$ resembles the polygonal area spectrum as discussed above: 

  \begin{theorem} \label{nsvt}

    Let $q$ be a half-translation surface. Then $\VT(q)$ either has a
    gap above zero (exactly when $q$ is a Veech surface) or is
    dense in a neighborhood $[0,a)$ for some $a>0$. 

  \end{theorem}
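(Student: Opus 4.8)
The plan is to treat the two cases separately. If $q$ is a Veech surface, then $\VT(q)$ has a gap above zero by the no-small-virtual-triangles theorem of \cite{SW10}, and there is nothing more to prove. So assume henceforth that $q$ is not a Veech surface; by the same theorem $\inf\big(\VT(q)\setminus\{0\}\big)=0$, and the task is to promote this to density of $\VT(q)$ in some interval $[0,a)$ with $a>0$. Two soft properties of the spectrum drive the argument. First, since every element of $\SL$ has determinant $1$ and holonomy vectors transform linearly, $|u\wedge v|$ is $\SL$--invariant, so $\VT(g\cdot q)=\VT(q)$ for all $g\in\SL$. Second, $\VT$ is lower semicontinuous along the stratum $\Qk$ of $q$: if $q_n\to q_\infty$ in $\Qk$, then every saddle connection of $q_\infty$ persists on $q_n$ for large $n$ with holonomy converging to its value on $q_\infty$, so each element of $\VT(q_\infty)$ is a limit of elements of $\VT(q_n)$.

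Combining these, let $\mathcal N\subseteq\Qk$ be the $\SL$--orbit closure of $q$ in its stratum; by the theorem of Eskin, Mirzakhani and Mohammadi \cite{EMM15}, $\mathcal N$ is an affine invariant submanifold, connected (indeed locally path connected), in which $\SL\cdot q$ is dense. For any $q'\in\mathcal N$, writing $q'=\lim_n g_n\cdot q$, lower semicontinuity together with $\SL$--invariance gives $\VT(q')\subseteq\overline{\VT(q)}$. Hence $\bigcup_{q'\in\mathcal N}\VT(q')\subseteq\overline{\VT(q)}$, and since the right-hand side is closed it suffices to show that $\bigcup_{q'\in\mathcal N}\VT(q')$ is dense in $[0,a)$ for some $a>0$.

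Next I would produce nondegenerate intervals inside this union. The key elementary point is that a finite configuration of vectors in $\R^2$ is determined up to $\SL$ by the pairwise wedge products of a spanning subfamily; consequently a deformation of $q$ within $\Qk$ that preserves all pairwise wedges of its saddle-connection holonomies is induced by $\SL$ --- this is the rigidity exploited in \cite{DLR10}. Since $q$ is not Veech, $\SL\cdot q$ is not closed in $\Qk$, so $\mathcal N$ strictly contains $\SL\cdot q$; thus there is a point $q_0\in\mathcal N$ and a tangent direction to $\mathcal N$ at $q_0$ transverse to the orbit, which by the rigidity point must change some wedge. Writing $u(q'),v(q')$ for the holonomies of two saddle connections $\sigma,\tau$ persisting near $q_0$, the continuous function $q'\mapsto u(q')\wedge v(q')$ is therefore non-constant on a connected neighbourhood of $q_0$ in $\mathcal N$, and by the intermediate value theorem its range of absolute values contains a nondegenerate interval, which lies in $\bigcup_{q'\in\mathcal N}\VT(q')\subseteq\overline{\VT(q)}$.

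The remaining --- and main --- difficulty is to obtain such intervals accumulating densely onto $0$. For this I would use the non-compactness of $\mathcal N$: following a path into an end of $\mathcal N$ along which a chosen saddle connection $\sigma$ shrinks produces a continuous family $q_\delta\in\mathcal N$, parametrized by $\delta=$ (length of $\sigma)\to 0$, in which $\sigma$ pairs with a bounded saddle connection $\tau$ to give $|u(q_\delta)\wedge v(q_\delta)|\asymp\delta$. The crux is that this small wedge can be taken non-constant on $\mathcal N$ near $q_\delta$ --- that is, that the transverse deformation space of $\mathcal N$ near the cusp genuinely moves $\sigma$ against some bounded saddle connection --- and I expect this to require a careful analysis of the geometry of the ends of $\mathcal N$, for instance via cylinder-deformation techniques on affine invariant submanifolds; this is the step I anticipate being hard. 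Granting it, the non-constant wedge near $q_\delta$ sweeps out an interval $[c_1(\delta),c_2(\delta)]\subseteq\overline{\VT(q)}$ with $0\le c_1(\delta)<c_2(\delta)$ and $c_i(\delta)\to 0$ as $\delta\to 0$; a continuity and intermediate-value argument on this family then shows $\bigcup_{\delta}[c_1(\delta),c_2(\delta)]\supseteq(0,a]$ for $a=c_2(\delta_0)$, and since $0\in\overline{\VT(q)}$ by \cite{SW10} we conclude $[0,a)\subseteq\overline{\VT(q)}$, i.e. $\VT(q)$ is dense in $[0,a)$.
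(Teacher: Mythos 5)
Your overall skeleton --- reduce to the $\SL$--orbit closure via $\SL$--invariance and continuity of holonomy vectors, then run an intermediate value argument on a wedge function $q'\mapsto\abs{u(q')\wedge v(q')}$ --- matches the paper's strategy (Proposition \ref{densetriangles}), and your treatment of the Veech direction and the reduction to density of $\bigcup_{q'\in\mathcal N}\VT(q')$ are fine. But there is a genuine gap at exactly the step you flag as hard: producing wedge values accumulating at $0$. Your first intermediate-value argument only yields \emph{some} nondegenerate interval $[c_1,c_2]\subseteq\overline{\VT(q)}$ with no control on $c_1$, and your proposed remedy --- following a shrinking saddle connection into an end of $\mathcal N$ and showing that the transverse deformation space still moves it against a bounded saddle connection --- is left entirely unproved. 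No such analysis of the ends of $\mathcal N$ is needed.

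The missing idea is to anchor the interval at $0$ by choosing the pair of saddle connections so that their wedge already vanishes at $q$ itself. The paper does this with the Duchin--Leininger--Rafi rigidity result in the form of Proposition \ref{rigidity}: since $q$ is not Veech, one can pick $q'\in\aimup_q-\orb_q$, and rigidity then supplies a simple closed curve $\alpha\in\hcyl(q)-\hcyl(q')$, i.e.\ $\alpha^q$ has constant direction while $\area(P_\alpha(q'))>0$. The saddle connections $e_1,\dotsc,e_k$ of $\alpha^q$ are mutually parallel, so every consecutive wedge $\phi_i(t)=\abs{v_i(t)\wedge v_{i+1}(t)}$ is $0$ at the start of a path $q_t$ from $q$ to $q'$ in $\aimup_q$; and some $\phi_i$ must become positive along the path, since otherwise the concatenation $e_1(t)\dotsm e_k(t)$ would remain a constant-direction geodesic representative of $\alpha$, contradicting $\area(P_\alpha(q_t))>0$ for $t>0$ (here the auxiliary polygon and Lemma \ref{area-sum} do the work). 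A single application of the intermediate value theorem on the open set where $e_i$ and $e_{i+1}$ persist then yields all of $[0,a]$, and density of $\orb_q$ finishes the argument --- with the bonus that the dense values come from \emph{based} virtual triangles. Note also that your ``rigidity via pairwise wedges of holonomies'' is asserted rather than proved; the form of rigidity actually available, and the one the paper uses, is the statement about $\hcyl$ in Proposition \ref{rigidity}.
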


  In fact, our argument provides a new proof of the ``gap
  implies Veech'' direction of the no-small-virtual-triangles
  theorem. Moreover, the virtual triangles yielding the
  dense subset of $[0,a)$ in the non-Veech case are \emph{based}
  virtual triangles; see Section \ref{sec:triangles} and Proposition
  \ref{densetriangles}. 
  The non-Veech case of Theorem \ref{nsvt} is proved very similarly to
  Theorem \ref{nscc}. 

  Finally we have some additional results on polygonal area and
  $\SL$--infimal length when $q$
  is a Veech surface. The first of these will be derived from the
  analogous result for $\VT(q)$ in \cite{SW10}. 

  \begin{theorem} \label{discreteness}

    If $q$ is a Veech surface then $\polyspec(q)$ is a discrete subset
    of\/ $\R$.  

  \end{theorem}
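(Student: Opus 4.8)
The plan is to derive discreteness of $\polyspec(q)$ from discreteness of $\VT(q)$, which is known for Veech surfaces by \cite{SW10}. The key mechanism is that when $q$ is a Veech surface, $\area(P_\alpha(q))$ can be expressed in terms of finitely many holonomy vectors of saddle connections, with only finitely many combinatorial possibilities once the area is bounded above. Concretely, fix $R > 0$ and consider all simple closed curves $\alpha$ with $\area(P_\alpha(q)) \leq R$. By Proposition \ref{polygon}\ref{tw3}, this gives an upper bound on $\lsl_\alpha(q)$, hence (after applying an element of $\SL$ to realize a near-infimal length) a bound on the geodesic length of $\alpha$ on a surface in the $\SL$--orbit of $q$. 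Since Veech surfaces have the lattice property, the geodesic $\alpha^{q'}$ on any $q' \in \SL \cdot q$ is a concatenation of a uniformly bounded number of saddle connections, each with holonomy vector of bounded length; there are only finitely many such saddle connections on $q$ up to the affine group action. The upshot is that the auxiliary polygon $P_\alpha(q)$, up to the action of the (discrete) Veech group on its vertices, takes only finitely many shapes, so its area takes only finitely many values below $R$.

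First I would recall the structure of $P_\alpha(q)$ from \cite{TW15}: its edge vectors are holonomy vectors of the saddle connections comprising $\alpha^q$ (or of the core curve and a crossing arc, in the cylinder case), so $\area(P_\alpha(q))$ is a fixed polynomial (a sum of $2\times 2$ determinants $|u \wedge v|$) in these holonomy vectors. Next I would quantify the finiteness: given the area bound $R$, Proposition \ref{polygon} bounds $\lsl_\alpha(q)$, and the definition of $\lsl$ together with the lattice property of the Veech group bounds both the number of saddle connections in $\alpha^{q'}$ for a suitable $q'$ and their lengths. Pulling back under the affine diffeomorphism $q' \to q$ (whose derivative lies in a fixed lattice) shows $\alpha^q$ is built from saddle connections of $q$ whose holonomy vectors lie in a bounded region, modulo the Veech group. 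Since $\hol(q)$ is discrete and the Veech group acts with finitely many orbits on saddle connections of bounded length, there are finitely many possible edge-vector tuples up to the Veech action, and the determinant expression for the area is invariant under $\SL$ (hence under the Veech group). Therefore $\polyspec(q) \cap [0,R]$ is finite for every $R$, which is exactly discreteness.

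The main obstacle I anticipate is the bookkeeping needed to pass from an area bound on $P_\alpha(q)$ to a genuine finiteness statement for the edge-vector data. The subtlety is that the infimum defining $\lsl_\alpha(q)$ need not be attained, so one must argue on a surface $q'$ in the orbit where $l_\alpha(q')$ is close to $\lsl_\alpha(q)$, control the number and lengths of saddle connections there, and then transport the information back to $q$ via an affine map whose derivative is constrained only to lie in the (possibly large) lattice $\SL$. Handling the cylinder case uniformly with the crooked-curve case, and making sure the combinatorial type of $\alpha^q$ (which saddle connections appear, with what multiplicities) is also constrained, requires care. Once these finiteness inputs are in place, the conclusion is immediate since the area is a fixed $\SL$--invariant polynomial in finitely many discrete data.
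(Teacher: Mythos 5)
You have the right ingredients in hand --- the expression of $\area(P_\alpha(q))$ as a sum of $2\times 2$ determinants of holonomy vectors (Lemma \ref{area-sum}) and the discreteness of $\VT(q)$ for Veech surfaces (Theorem \ref{nst}\ref{sw3a}) --- but the finiteness argument you build on top of them has a genuine gap, and it is also unnecessary. The problematic step is the claim that a bound on $\lsl_\alpha(q)$ forces $\alpha^{q'}$, for a near-optimal $q' = A \cdot q$, to be a concatenation of a \emph{uniformly bounded} number of saddle connections whose holonomy data is confined to finitely many Veech-group orbits. The $\SL$--orbit of a Veech surface is closed but not compact: surfaces $q'$ in the orbit can have arbitrarily short saddle connections, so a bound on $l_\alpha(q')$ does not bound the number of saddle connections appearing in $\alpha^{q'}$. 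Likewise, transporting the holonomy data back to $q$ multiplies lengths by the stretch factor $\sv(A)$, and since the Veech group is a non-cocompact lattice you cannot normalize $A$ into a compact fundamental domain; so ``holonomy vectors of bounded length on $q$, modulo the Veech group'' is not something your length bound actually delivers. You flag this bookkeeping as the main obstacle, but as stated it is not just bookkeeping --- the quantities you need to be finite are not controlled by the hypotheses you have.

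The repair is to stop one step earlier, which is exactly what the paper does. By Lemma \ref{area-sum}, $\area(P_\alpha(q)) = \sum_{e,e'} m_e m_{e'} \abs{v_e \wedge v_{e'}}$, and each number $\abs{v_e \wedge v_{e'}}$ already lies in $\VT(q)$ because $v_e, v_{e'} \in \hol(q)$; thus the area is a positive integer combination of elements of $\VT(q)$. For a Veech surface, $\VT(q)$ is discrete and its nonzero elements are bounded below by some $\epsilon > 0$ (Theorem \ref{nst}\ref{sw3}--\ref{sw3a}). Hence if $\area(P_\alpha(q)) \leq R$, the sum contains at most $R/\epsilon$ nonzero summands counted with the multiplicities $m_e m_{e'}$, each equal to an element of the finite set $\VT(q) \cap (0,R]$. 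There are therefore only finitely many possible values of $\area(P_\alpha(q))$ in $[0,R]$ for each $R$, which is discreteness of $\polyspec(q)$. No control on the shapes of the polygons, on the number of saddle connections in $\alpha^q$, or on the Veech group action is needed.
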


  Next, for $a > 0$ define the sets 
  \begin{eqnarray*}
    \PA(a) &=& \set{q \in \QD^1(S) \st \area(\poly_\alpha(q)) \geq a \text{
    for every crooked curve } \alpha \text{ on } q}, \\
    \IL(a) &=& \set{q \in \QD^1(S) \st \lsl_\alpha(q) \geq a \text{
    for every crooked curve } \alpha \text{ on } q}.
  \end{eqnarray*}
  We say that two half-translation surfaces are \emph{affinely
  equivalent} if they are related by the actions of the mapping class
  group and $\SL$. 

  \begin{theorem} \label{finiteness}

    For any $a > 0$, the sets $\PA(a)$ and $\IL(a)$ both contain only finitely
    many affine equivalence classes of half-translation surfaces. 

  \end{theorem}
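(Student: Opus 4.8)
The plan is to reduce both statements to a single finiteness statement about Veech surfaces in one stratum, and then to rule out an infinite family of the associated Teichm\"uller curves. By Proposition~\ref{polygon} there is a constant $C=C(S)\geq 1$ with $C^{-1}\lsl_\alpha(q)^2\leq\area(P_\alpha(q))\leq C\,\lsl_\alpha(q)^2$ for every crooked curve $\alpha$ and every half-translation surface $q$; hence $\IL(a)\subseteq\PA(a^2/C)$ and $\PA(a)\subseteq\IL(\sqrt{a/C}\,)$ for every $a>0$, so the two assertions are equivalent and it suffices to prove that $\IL(a)$ meets only finitely many affine equivalence classes for each $a>0$. Fix $a>0$. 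By Theorem~\ref{nscc} every element of $\IL(a)$ is a Veech surface, and since $\QD^1(S)$ is a finite union of strata it is enough to bound the number of affine classes in $Y:=\IL(a)\cap\Qk^1$ for each stratum $\Qk$.

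The set $Y$ is invariant under the $\SL$-- and $\mcg$--actions: $\lsl_\alpha$ is constant along $\SL$--orbits, and a mapping class carries crooked curves to crooked curves and preserves $\SL$--infimal lengths. It is also closed in $\Qk^1$: if $q_n\to q$ in $\Qk^1$ with $q_n\in Y$ and $\alpha$ is crooked on $q$, then $\lsl_\alpha(q)>0$, so $\lsl_\alpha(q_n)>0$ for $n$ large by Proposition~\ref{infimalcontinuity}, whence $\alpha$ is crooked on $q_n$, $\lsl_\alpha(q_n)\geq a$, and $\lsl_\alpha(q)=\lim_n\lsl_\alpha(q_n)\geq a$. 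Since $Y$ consists of Veech surfaces, each of which has a closed $\SL$--orbit in its stratum, $Y$ is a union of closed $\SL$--orbits, and its image $Z$ in $\QkMod^1=\Qk^1/\mcg$ is a closed union of Teichm\"uller curves; we must show this union is finite.

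Suppose not, and let $V_1,V_2,\dots\subseteq Z$ be distinct Teichm\"uller curves. There is a universal constant $\epsilon_0>0$ such that every $\SL$--orbit of a unit-area half-translation surface meets the compact set $\mathcal K=\set{[q]\in\QkMod^1 \st \text{every saddle connection of } q \text{ has length at least } \epsilon_0}$; choose $q_i\in V_i$ with $[q_i]\in\mathcal K$. After passing to a subsequence, $[q_i]\to[q_\infty]\in\mathcal K$, and $[q_\infty]\in Z$ because $Z$ is closed; thus $q_\infty$ is a Veech surface lying in $\IL(a)$, and the distinct curves $V_i$ accumulate onto $V_\infty:=\SL\,q_\infty/\mcg$. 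Ruling this out is the main obstacle. One option is to quote the isolation properties of affine invariant submanifolds \cite{EMM15}, which imply that only finitely many Teichm\"uller curves in a fixed stratum meet a given compact subset of moduli space; this contradicts all of the $V_i$ meeting $\mathcal K$.

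A more self-contained route, and the one we expect to carry out, is to argue directly that distinct Teichm\"uller curves lying in $\IL(a)$ cannot accumulate onto $V_\infty$. Since $q_\infty\in\IL(a)$ is a Veech surface, $\polyspec(q_\infty)$ is discrete (Theorem~\ref{discreteness}) and has a gap above $0$, so the crooked curves on $q_\infty$ with $\area(P_\alpha(q_\infty))\leq a^2/C+1$ form a finite set $\mathcal A$. Using continuity of the auxiliary polygon, together with a compactness argument near the Veech surface $q_\infty$ controlling which crooked curves can become short, one shows that for $i$ large $\mathcal A$ is also the set of crooked curves on $q_i$ of polygon area at most $a^2/C+\tfrac12$. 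The rigidity statement for $\SL$--orbits of \cite{DLR10} --- which recovers the affine class from the marked $\SL$--infimal length spectrum --- combined with this uniformity should then force the $\SL$--infimal length spectrum of $q_i$ to coincide with that of $q_\infty$ for $i$ large, so that $V_i=V_\infty$, contradicting distinctness. The difficulty is concentrated in establishing the required uniformity of the polygons $P_\alpha(\param)$ in a neighborhood of $q_\infty$ within the Veech locus; this, rather than any single cited input, is where we expect the main work to lie.
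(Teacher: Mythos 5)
Your reduction of $\PA(a)$ to $\IL(a)$ and your verification that $Y=\IL(a)\cap\Qk$ is closed, $\SL$-- and $\mcg(S)$--invariant, and consists of Veech surfaces are correct and match the set-up of the paper's proof. The gap is in the finiteness step itself, in both routes you offer. The ``isolation'' statement you want to quote --- that only finitely many Teichm\"uller curves in a fixed stratum meet a given compact subset of moduli space --- is false: square-tiled surfaces generate infinitely many distinct Teichm\"uller curves in a fixed stratum, and each of them meets the universal thick part $\mathcal{K}$ (your own observation that every $\SL$--orbit enters $\mathcal{K}$ already shows this), so no contradiction arises from all the $V_i$ meeting $\mathcal{K}$. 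What \cite{EMM15} gives in that direction is that distinct closed orbits accumulating must be contained in a \emph{larger} affine invariant submanifold, which by itself does not rule out accumulation.

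Your ``self-contained'' route also fails at its first concrete step: on a Veech surface $q_\infty$ the set of crooked curves with $\area(P_\alpha(q_\infty))$ below a fixed bound is typically infinite, since the affine group is a lattice (an infinite group) acting on $\scc$ and preserving polygon areas; Theorem \ref{discreteness} bounds the set of \emph{values} in $\polyspec(q_\infty)$, not the set of curves realizing them. The remaining steps of that route are explicitly conjectural in your write-up, so finiteness is not established. The missing ingredient is Theorem \ref{EMMfinite} (Theorem 2.2 of \cite{EMM15}): since $\IL(a)$ descends to a closed $\SL$--invariant subset of each unmarked stratum, that theorem makes it a finite union of $\SL$--orbit closures, and because every point of $\IL(a)$ is a Veech surface each such orbit closure is a single closed $\SL$--orbit; with finitely many strata this yields finitely many affine equivalence classes. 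This is exactly the paper's short proof, and the closedness, invariance, and Veechness you established plug directly into it.
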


  This result is an application of Theorem 2.2 of \cite{EMM15}, which
  classifies the closed $\SL$--invariant subsets of strata of
  half-translation surfaces: namely, any such subset is a finite union of
  orbit closures. 
  
  \subsection*{Acknowledgements} The authors thank Alex Wright for many
  helpful conversations regarding $\SL$--orbit closures. We also thank
  Jenya Sapir for encouraging us to prove continuity of $\SL$--infimal
  length, and the referee for providing helpful feedback. Forester was
  partially supported by NSF award DMS-1105765, 
  and Tao by NSF award DMS-1311834. 
  
\section{Background}
  
  \subsection{Quadratic differentials and half-translation surfaces}
  
  We begin by recalling relevant background regarding quadratic
  differentials and half-translation surfaces. For further details, consult
  \cite{Str84}. 

  Let $S$ be a surface of genus $g$ and let $\varrho \subset S$ be $p$
  marked points with $3g-3+p \ge 1$. A \emph{half-translation
  structure} on $S$ consists of a finite set $\varsigma$ of singular
  points on $S$ (possibly including marked points),
  together with an atlas of charts to $\CC$ defined away from $\varsigma$,
  where the transition maps are of the form $z \mapsto \pm z + c$ for some
  $c \in \CC$. By pulling back the standard Euclidean metric on $\CC$, one
  obtains a locally Euclidean metric on $S - \varsigma$. We require that
  the metric completion of this metric yields $S$ with a singular Euclidean
  structure, where every singularity has a cone angle $n \pi$ for some $n
  \ge 1$, and those that are not marked points must have cone angle at
  least $3\pi$. The atlas determines a preferred vertical direction on $S$,
  and we consider this to be part of the data in the half-translation
  structure.

  One can construct a half-translation surface by taking a finite
  collection of disjoint Euclidean polygons in $\CC$, with pairs of edges
  identified by gluing maps of the form $z \mapsto \pm z + c$ \cite{FM14}.

  By a \emph{quadratic differential} $q$ on $S$, we mean a complex
  structure on $S$ equipped with an integrable holomorphic quadratic
  differential on $S - \varrho$ with finitely many zeros. This means that
  $q$ can extended to a meromorphic quadratic differential on $S$ with at
  worst simple poles at the marked points. The number of zeros minus the
  number of poles of $q$, counted with multiplicity, is $4g-4$. The union
  of the set of zeros and poles will constitute the singularities of $q$.
  There is a \emph{natural} holomorphic coordinate system $z$ on $S$ such
  that, in a neighborhood of a point away from a singularity, $q$ is given
  by $q = dz^2$. In a neighborhood of a zero of order $k \ge 1$, $q =
  z^kdz^2$, and in a neighborhood of a pole, $q = \frac{1}{z}dz^2$. A zero
  of order $k$ has cone angle $(k+2)\pi$ and a pole has cone angle $\pi$.
  Thus $q$ induces a half-translation structure on $S$, where the singular
  Euclidean metric is given locally by $|dz|^2$.  We shall use $q$ to
  denote a quadratic differential on $S$, as well as $S$ equipped with the
  corresponding half-translation structure. The assumptions on $q$ ensure
  that the area of the half-translation surface, equal to $\int_S
  \abs{q}dz^2$, is finite.

  Let $\QD(S)$ be the space of quadratic differentials on $S$. This is a
  complex manifold of dimension $6g-6+2p$, and can be identified with the
  cotangent bundle to Teichm\"uller space $\T$ via the natural
  projection $\QD(S) \rightarrow \T$ obtained by taking the underlying
  complex structure of each half-translation surface. The space
  $\QD^1(S)$ of \emph{unit area} quadratic differentials on $S$ can be
  identified with the unit cotangent bundle to $\T$.
 
  There is a natural $\SL$--action on $\QD(S)$ defined by post-composing
  the coordinate charts to $\CC$ by an $\R$--linear transformation. One can
  view this action by applying an element ${A \in \SL}$ to a defining set
  of polygons for a half-translation surface $q$ to obtain a new
  half-translation structure $A \cdot q$, noting that parallel edges of the
  same length remain so under $\SL$--deformations. Note that
  $\SL$--deformations preserve area, and so $\SL$ also acts naturally on
  $\QD^1(S)$.

  We will write $e^{i\theta}\cdot q$ as shorthand for
  $\begin{pmatrix}\cos\theta & -\sin\theta \\ \sin\theta & \cos\theta
  \end{pmatrix}\cdot q$; or in natural coordinates at a regular point,
  $e^{i\theta}\cdot q =
  e^{i\theta} dz^2$.
  
  \subsection{Geodesic representatives and measured foliations}
  
  Let $\scc$ be the set of (free homotopy classes of) essential simple
  closed curves on $S$. For any $\alpha \in \scc$, either $\alpha$ has a
  unique geodesic representative on $q$, or there is a unique maximal flat
  cylinder on $q$ foliated by the closed geodesics in the homotopy class of
  $\alpha$. In the former case, the geodesic representative of $\alpha$ is
  a concatenation of \emph{saddle connections} -- embedded geodesic arcs or
  loops with endpoints at singularities with no singularities on their
  interior. The angle between consecutive saddle connections is always
  at least $\pi$ on both sides. We shall use $\alpha^q$ to denote any
  geodesic representative of $\alpha$ on $q$.
  
  If $\alpha^q$ is a core curve of a flat cylinder, then we call $\alpha$ a
  \emph{cylinder curve} on $q$. Let $\cyl(q)$ denote the set of cylinder
  curves on $q$, and $\hcyl(q)$ the set of curves whose geodesic
  representatives have constant direction on $q$. Any curve in $\scc -
  \hcyl(q)$ is called a \emph{crooked curve} on $q$.
  
  Let $|dz|^2 = dx^2+dy^2$ be the singular Euclidean metric associated to
  $q$. We can consider several notions of length of a curve $\alpha$ on
  $q$. The Euclidean length $l_\alpha(q)$ of $\alpha^q$ is given by
  integrating $\alpha^q$ with respect to $|dz|$ (in local coordinates).
  Integrating $\alpha^q$ with respect to $\abs{dx}$ and $\abs{dy}$  (in
  local coordinates) gives the \emph{horizontal} and \emph{vertical}
  lengths $l_\alpha^H(q)$ and $l_\alpha^V(q)$ respectively. Finally, define
  the $\SL$--\emph{infimal length} of $\alpha$ with respect to $q$ to be
  \[
    \lsl_\alpha(q) = \inf_{q' \in \, \SL \, \cdot \, q} l_\alpha(q').
  \]
  This should be viewed as a measure of length of $\alpha$ with respect to
  the $\SL$--orbit of $q$, rather than with respect to $q$ itself.
      
  Using the natural coordinate of $q$, one can pull back the foliation of
  $\CC$ by lines in the direction $\theta \in \RP$ to obtain a measured
  foliation $\nu_q^\theta$ on $q$, where the transverse measure is the
  Euclidean distance between leaves. In particular, the horizontal and
  vertical directions respectively give rise to the \emph{horizontal} and
  \emph{vertical} foliations $\nu_q^H = \nu_q^0$ and $\nu_q^V =
  \nu_q^{\pi/2}$. The map $\QD(S) \times \RP \rightarrow \MF(S)$ defined by
  ${(q, \theta) \mapsto \nu_q^\theta}$ is continuous, where $\MF(S)$ is the
  space of measured foliations on $S$ \cite{HM79}. Let $\MF(q) = \set{t
  \cdot \nu_q^\theta \st \theta \in \RP, t \in \R_+}$. Let $\PMF(S)$ and
  $\PMF(q)$ be the projectivizations of $\MF(S)$ and $\MF(q)$,
  respectively. Note that these sets are invariant under
  $\SL$--deformations.
  
  The \emph{geometric intersection number} $i \from \scc \times \scc \to
  \R$ extends continuously to ${\MF(S) \times \MF(S)}$. For any curve
  $\alpha$, we have $i(\nu_q^H, \alpha) = l_\alpha^V(q)$ and $i(\nu_q^V,
  \alpha) = l_\alpha^H(q)$. See \cite{FLP79} for additional background on
  measured foliations. 
  
  \subsection{Auxiliary polygons}
  
  We now recall the construction from \cite{TW15} of the \emph{auxiliary
  polygon} $P_\alpha(q)$ associated to a curve $\alpha$ and a quadratic
  differential $q \in \QD(S)$.
  
  To each saddle connection $e$ on $q$, we assign a \emph{holonomy vector}
  ${v}_e \in \R^2$ which is parallel to it and is of the same length. This
  is well defined up to scaling by $\pm 1$. For consistency, we require
  that the direction $\theta(v_e)$ (that is, the oriented angle from the
  positive $x$--axis to $v_e$) lies in the interval $[0, \pi)$. Consider a
  geodesic representative $\alpha^q$ on $q$. If $\alpha$ is a cylinder
  curve, we may choose $\alpha^q$ to be a boundary component of the maximal
  flat cylinder with core curve $\alpha$. Let $m_e$ be the number of times
  $\alpha^q$ runs over the saddle connection $e$, in either direction.
  Define
  \[
    P_\alpha(q) = \set{\sum_e t_e m_e {v}_e \st -\frac{1}{2} \leq
    t_e \leq \frac{1}{2}} \subset \R^2, 
  \] 
  where the sum is taken over all saddle connections used by
  $\alpha^q$. 

  The set $P_\alpha(q)$ is a convex Euclidean polygon,
  unless $\alpha^q$ has constant direction, in which case $P_\alpha(q)$
  degenerates to a line segment parallel to $\alpha^q$ of length
  $l_\alpha(q)$. In particular, in the case of a cylinder curve, the
  definition of $P_\alpha (q)$ does not depend on the choice of
  boundary component. Moreover, this construction commutes with
  $\SL$--deformations: for all $A \in \SL$ we have $P_\alpha(A \cdot q) = A
  \cdot P_\alpha(q)$.
     
  \begin{proposition}[\cite{TW15}]\label{polygon}

    Given a curve $\alpha \in \scc$ and $q \in \QD^1(S)$, the auxiliary
    polygon $P_\alpha(q)$ defined above satisfies the following:
    \begin{enumerate}[label=\textup{(\roman*)}]

      \item The perimeter of $P_\alpha(q)$ is $2
        l_\alpha(q)$,\label{tw1} 

      \item $\height(P_\alpha(q)) = l_\alpha^V(q)$ and
      $\width(P_\alpha(q))
        = l_\alpha^H(q)$, \label{tw2} 

      \item $\pi \area(P_\alpha(q)) \leq \lsl_\alpha(q)^2 \leq
        8\area(P_\alpha(q))$, \label{tw3} 

      \item $\area(P_\alpha(q)) = 0$ if and only if $\alpha \in
        \hcyl(q)$. \label{tw4} \qed

    \end{enumerate}

  \end{proposition}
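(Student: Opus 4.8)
The structural fact to exploit is that $P_\alpha(q)$ is a \emph{zonotope}: writing $I_e = \set{t\, m_e v_e \st -\tfrac12 \le t \le \tfrac12}$ for the origin-centred segment of length $m_e\abs{v_e}$ in direction $\theta(v_e)$, one has $P_\alpha(q) = \sum_e I_e$ (a Minkowski sum), a centrally symmetric convex polygon which degenerates to a segment exactly when the $v_e$ with $m_e\ge 1$ are all parallel. Since these are precisely the holonomy vectors of the saddle connections traversed by $\alpha^q$, this occurs if and only if $\alpha^q$ has constant direction, i.e.\ $\alpha\in\hcyl(q)$, giving \ref{tw4}. For \ref{tw1} and \ref{tw2} I would use that orthogonal projection commutes with Minkowski sum: the projection of $P_\alpha(q)$ onto the line in direction $\theta$ has length $w(\theta)=\sum_e m_e\abs{v_e}\,\abs{\cos(\theta-\theta(v_e))}$. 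Taking $\theta = 0$ and $\theta=\tfrac\pi2$ and using $\abs{(v_e)_x} = \abs{v_e}\abs{\cos\theta(v_e)}$ together with the fact that each traversal of a saddle connection $e$ adds $\abs{(v_e)_x}$ to horizontal length yields $\width(P_\alpha(q))=\sum_e m_e\abs{(v_e)_x}=l_\alpha^H(q)$, and similarly $\height(P_\alpha(q))=l_\alpha^V(q)$, which is \ref{tw2}. For \ref{tw1}, Cauchy's perimeter formula (valid as $P_\alpha(q)$ is centrally symmetric) gives
\[
  \mathrm{perimeter}(P_\alpha(q)) = \int_0^\pi w(\theta)\,d\theta = \sum_e m_e\abs{v_e}\int_0^\pi\abs{\cos(\theta-\theta(v_e))}\,d\theta = 2\sum_e m_e\abs{v_e} = 2\,l_\alpha(q),
\]
using $\int_0^\pi\abs{\cos(\theta-c)}\,d\theta = 2$ and $l_\alpha(q)=\sum_e m_e\abs{v_e}$. (One could instead argue directly that the zonotope $\sum_e I_e$ has, in each direction, two antipodal boundary edges parallel to it of total length equal to the combined length of the $I_e$ in that direction.)

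The substantive statement is \ref{tw3}; write $L(K)$ for the perimeter of a convex body $K$. If $\alpha\in\hcyl(q)$ then $\area(P_\alpha(q))=0$ by \ref{tw4}, and also $\lsl_\alpha(q)=0$: here $P_\alpha(q)$ is a segment whose $\SL$--orbit contains segments of arbitrarily small length, so by \ref{tw1} the inequality reduces to $0\le 0\le 0$. Assume then $\mathcal A := \area(P_\alpha(q)) > 0$. Since $P_\alpha(A\cdot q) = A\cdot P_\alpha(q)$ for $A\in\SL$ and $\SL$ preserves area, \ref{tw1} gives $\lsl_\alpha(q)=\tfrac12\inf_{A\in\SL}L(A\cdot P_\alpha(q))$, so, setting $P := P_\alpha(q)$, item \ref{tw3} is equivalent to the planar assertion $4\pi\mathcal A \le \bigl(\inf_{A\in\SL}L(A\cdot P)\bigr)^2 \le 32\,\mathcal A$. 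The lower bound is the isoperimetric inequality $L(K)^2\ge 4\pi\area(K)$ applied to each $K=A\cdot P$, all of area $\mathcal A$. For the upper bound, let $E$ be the largest-area ellipse inscribed in $P$; since $P$ is centrally symmetric about the origin so is $E$, and John's theorem (symmetric case, dimension two) gives $E\subseteq P\subseteq\sqrt2\,E$. Pick $B\in\mathrm{GL}^+(2,\R)$ carrying $E$ to a round disk $D$ of radius $r$; then $D\subseteq B\cdot P\subseteq\sqrt2\,D$, so $\area(B\cdot P)\ge\pi r^2$, while $L(B\cdot P)\le L(\sqrt2\,D)=2\sqrt2\,\pi r$ by monotonicity of perimeter under inclusion of convex bodies. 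Since $L(K)^2/\area(K)$ is unchanged under rescaling $K$ and $B$ is a positive multiple of an element of $\SL$,
\[
  \Bigl(\inf_{A\in\SL}L(A\cdot P)\Bigr)^2 \;\le\; \frac{L(B\cdot P)^2}{\area(B\cdot P)}\,\mathcal A \;\le\; \frac{(2\sqrt2\,\pi r)^2}{\pi r^2}\,\mathcal A \;=\; 8\pi\,\mathcal A \;\le\; 32\,\mathcal A,
\]
that is, $\lsl_\alpha(q)^2\le 2\pi\,\area(P_\alpha(q))$ --- slightly stronger than claimed.

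I anticipate no deep obstacle here: the analytic inputs --- Minkowski additivity of projections, Cauchy's formula, the isoperimetric inequality, monotonicity of perimeter under inclusion, and John's theorem for centrally symmetric convex bodies --- are all standard, and the real work is in assembling them cleanly. The two places that need care are the cylinder-curve case of \ref{tw1}--\ref{tw2}, where $\alpha^q$ is taken to be a boundary component of the maximal flat cylinder and one must observe that it has the same Euclidean, horizontal and vertical lengths as the core geodesic so that $l_\alpha(q)=\sum_e m_e\abs{v_e}$ persists; and the passage from $\SL$ to $\mathrm{GL}^+(2,\R)$ in \ref{tw3}, which is harmless once one notes that $L(K)^2/\area(K)$ and all the inequalities involved are scale-invariant.
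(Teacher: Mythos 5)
The paper offers no proof of this proposition at all — it is quoted verbatim from \cite{TW15} and used as a black box — so your argument is by necessity a different, self-contained route, and it is correct. Your structural starting point, that $P_\alpha(q)$ is the Minkowski sum of the origin-centred segments $I_e$ and hence a centrally symmetric zonotope degenerating exactly when the $v_e$ are all parallel, is precisely what the paper's Lemma \ref{tiling} records, and Cauchy's formula $L(K)=\int_0^\pi w_K(\theta)\,d\theta$ is the same fact the paper invokes later in the proof of Lemma \ref{fellow}; so \ref{tw1}, \ref{tw2} and \ref{tw4} go through as you say, the only delicate point being the one you already flag, that for a cylinder curve the chosen boundary representative has the same total, horizontal and vertical lengths as the core. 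For \ref{tw3}, the reduction via $P_\alpha(A\cdot q)=A\cdot P_\alpha(q)$ and \ref{tw1} to the planar statement $4\pi\area(P)\le\bigl(\inf_{A\in\SL}L(A\cdot P)\bigr)^2\le 32\area(P)$ is legitimate; the lower bound is indeed just the isoperimetric inequality, and the upper bound via the John ellipse of a centrally symmetric planar body (factor $\sqrt2$), monotonicity of perimeter under inclusion, and scale-invariance of $L^2/\area$ is sound and even yields the sharper constant $2\pi$ in place of $8$. What this buys over the source is explicitness: Tang and Webb's normalization is the existence of $A_0\in\SL$ with $\ecc(A_0\cdot P)\le 2$ (the input quoted in Lemma \ref{lambdagrowth}), which plays exactly the role your John ellipse plays. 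The one step I would spell out is the return from $B\in\mathrm{GL}^+(2,\R)$ to $\SL$: writing $B=\mu A_0$ with $\det A_0=1$, so that $\area(A_0\cdot P)=\area(P)=\mathcal{A}$ and $L(A_0\cdot P)^2=L(B\cdot P)^2\,\mathcal{A}/\area(B\cdot P)$, makes the first inequality in your final display an honest statement about a genuine element of $\SL$.
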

   
  Here, the \emph{perimeter} of a polygon $P$ is the length of its
  boundary $\partial P$. In the situation where $P$ degenerates to a line
  segment, we view $\partial P$ as a closed path traversing the line
  segment once in each direction.
  
  The next property of $P_\alpha(q)$ that we need is really 
  a statement about centrally symmetric convex polygons in
  general. Let $v_1, \dotsc, v_k$ be non-zero vectors in $\R^2$ whose 
  directions $\theta(v_i)$ satisfy 
  \[
    0 \leq \theta(v_1) \leq \dotsb \leq \theta(v_k) < \pi.
  \]
  Let $P_k$ be the convex set $\set{ \sum_{i=1}^k t_i v_i \st - 1/2
  \leq t_i \leq 1/2}$. 

  \begin{lemma}\label{tiling}

    The points $p_k = \sum_{i=1}^k v_i/2$  and $-p_k$ lie on the boundary
    $\partial P_k$. Furthermore, 

    \begin{enumerate}[label=\textup{(\roman*)}]

    \item \label{t1} the boundary arc $\partial^+P_k$, traveling
    counter-clockwise from $-p_k$ to $p_k$, is the path obtained by
    concatenating the vectors $v_1, \dotsc, v_k$, in order. Similarly,
    the boundary arc $\partial^- P_k$ from $p_k$ to $-p_k$ is the
    concatenation of $-v_1, \dotsc, -v_k$.  

    \item \label{t2} $P_k$ admits a tiling by the (possibly degenerate)
    parallelograms $v_i \times v_j$, with one copy of $v_i 
    \times v_j$ for each unordered pair $i \not= j$. 

    \end{enumerate}

  \end{lemma}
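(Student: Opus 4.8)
The plan is to induct on $k$. The base case $k=1$ is immediate: $P_1$ is the segment $\{t_1 v_1 : -1/2 \le t_1 \le 1/2\}$, its endpoints are $p_1 = v_1/2$ and $-p_1 = -v_1/2$, the arc $\partial^+ P_1$ from $-p_1$ to $p_1$ is exactly the vector $v_1$, and the tiling in \ref{t2} is empty (no unordered pairs $i \ne j$). For the inductive step, I would write $P_{k} = P_{k-1} + \{t_k v_k : -1/2 \le t_k \le 1/2\}$, i.e.\ $P_k$ is the Minkowski sum of $P_{k-1}$ with a segment in direction $v_k$. The key geometric fact is that sweeping a convex body by a segment in a direction $\theta$ adds, to the two boundary arcs of the convex body cut out by the supporting lines of slope $\theta$, a translate of that segment; concretely, since $\theta(v_k) \ge \theta(v_i)$ for all $i < k$, the point $p_{k-1} = \sum_{i<k} v_i/2$ is the ``counterclockwise-most'' vertex of $P_{k-1}$ in the sense that the outward turning from $\partial^+ P_{k-1}$ to $\partial^- P_{k-1}$ happens there, and the supporting line in direction $v_k$ touches $P_{k-1}$ exactly at $p_{k-1}$ (and at $-p_{k-1}$). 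Hence $p_k = p_{k-1} + v_k/2$ and $-p_k = -p_{k-1} - v_k/2$ lie on $\partial P_k$, and the new upper arc $\partial^+ P_k$ is $\partial^+ P_{k-1}$ followed by the translate of $v_k$ based at $p_{k-1}$ — that is, the concatenation $v_1, \dots, v_{k-1}, v_k$. The lower arc is handled symmetrically using central symmetry of $P_k$, giving \ref{t1}.

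For \ref{t2}, the Minkowski-sum picture gives the tiling almost for free: $P_k = P_{k-1} + (\text{segment in direction } v_k)$ decomposes as $P_{k-1}$ together with the region swept out, which is the ``prism'' $\bigcup_{-1/2 \le t_k \le 1/2} (\partial^+ P_{k-1} + t_k v_k)$ lying between $\partial^+ P_{k-1}$ and its translate $\partial^+ P_{k-1} + v_k$. Since $\partial^+ P_{k-1}$ is, by the inductive part \ref{t1}, the concatenation of $v_1, \dots, v_{k-1}$, this swept region is tiled by the parallelograms $v_i \times v_k$ for $i = 1, \dots, k-1$ (one for each edge of the arc, possibly degenerate when $v_i \parallel v_k$). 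Combined with the tiling of $P_{k-1}$ by $\{v_i \times v_j : i \ne j,\ i,j < k\}$ from the inductive hypothesis, this yields a tiling of $P_k$ by $\{v_i \times v_j : i \ne j,\ i,j \le k\}$, one copy of each unordered pair, as claimed.

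The main obstacle is making the ``supporting line in direction $v_k$ meets $P_{k-1}$ exactly at $\pm p_{k-1}$'' claim rigorous and verifying that the swept region meets $P_{k-1}$ only along the arc $\partial^+ P_{k-1}$ (so that the union is genuinely a tiling with disjoint interiors, not an overlap). This amounts to checking that, with the ordering $\theta(v_1) \le \dots \le \theta(v_k)$, the support function of $P_{k-1}$ in direction normal to $v_k$ is maximized precisely at the vertex $p_{k-1}$; equivalently, that $p_{k-1}$ is the vertex reached by following $\partial^+ P_{k-1}$ until the next edge $v_k$ would turn ``the wrong way.'' This is a routine but slightly fiddly monotonicity-of-angles argument, and once it is in place everything else follows from the Minkowski-sum bookkeeping and central symmetry.
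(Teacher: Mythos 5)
Your proposal is correct and follows essentially the same route as the paper: induction on $k$ using the Minkowski-sum decomposition $P_k = P_{k-1} + S_k$, identifying $\partial^+ P_k$ as $\partial^+ P_{k-1}$ with the segment $v_k$ appended, and tiling $P_k$ by a translate of $P_{k-1}$ together with the prism $\partial^+ P_{k-1} + S_k$ swept out by $v_k$. The "fiddly" point you flag (that the support line normal to $v_k$ meets $P_{k-1}$ at $\pm p_{k-1}$, possibly along a degenerate edge when $v_{k-1} \parallel v_k$) is handled in the paper at the same level of detail, by the one-line appeal to the ordering $\theta(v_1) \leq \dotsb \leq \theta(v_k) < \pi$.
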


  \begin{proof}

    We proceed by induction on $k$, the case $k=1$ being trivial (with
    $P_1$ a line segment). 

    Recall that the \emph{Minkowski sum} of sets
    $A, B \subset \R^2$ is the set $A + B = \set{a + b \st a \in A, b \in
    B}$. Observe that $P_k$ is the Minkowski sum of
    $P_{k-1}$ with the line segment $S_k$ with endpoints $\pm
    v_k/2$. The description of $P_{k-1}$ given by the
    induction hypothesis allows us to see this sum clearly. The boundary
    of $P_k$ decomposes into four parts: the path $\partial^+ P_{k-1} +
    \set{ -v_k/2}$, the path $\partial^- P_{k-1} + \set{ v_k / 2}$,
    and two shifted copies of $S_k$; see Figure \ref{polyfig}. It is
    the assumption that $\theta(v_k)$ lies between $\theta(v_i)$ and
    $\pi$ for all $i$ that ensures that $\partial P_k$ is as
    described. 

    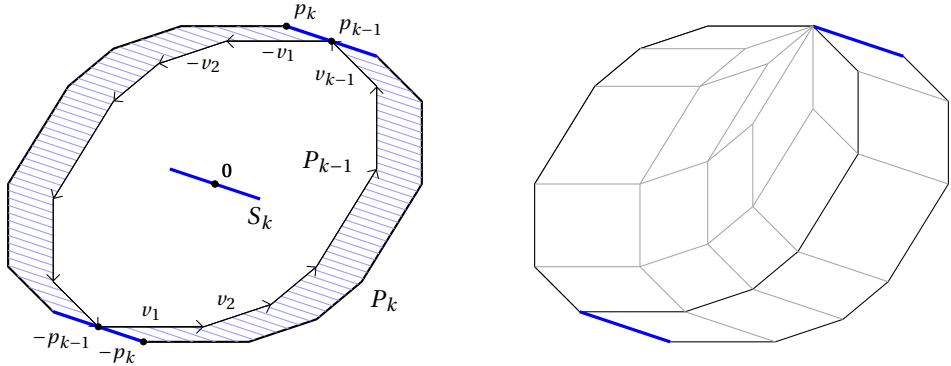
\begin{figure}[ht]
      \begin{center}
      \begin{tikzpicture}

        \path (0,0) coordinate (origin);
        \path (7,0) coordinate (rightorigin); 
        \path (1.4,0) coordinate (V1);
        \path (.9,.3) coordinate (V2);
        \path (.6,.5) coordinate (V3);
        \path (.8, 1.3) coordinate (V4);
        \path (0, 1.1) coordinate (V5);
        \path (-.6, .6) coordinate (V6);

        \path (-1.4,0) coordinate (V1-);
        \path (-.9,-.3) coordinate (V2-);
        \path (-.6,-.5) coordinate (V3-);
        \path (-.8, -1.3) coordinate (V4-);
        \path (0, -1.1) coordinate (V5-);
        \path (.6, -.6) coordinate (V6-);

        \path (.6, -.2) coordinate (W-);
        \path (-.6, .2) coordinate (W+);

        \path ($2*(W-)$) coordinate (V7-);
        \path ($2*(W+)$) coordinate (V7+);

        \path (origin) ++($.5*(V1-)$) ++($.5*(V2-)$) ++($.5*(V3-)$)
        ++($.5*(V4-)$) ++($.5*(V5-)$) ++($.5*(V6-)$) coordinate (P6-); 
        \path ($-1*(P6-)$) coordinate (P6+); 

        \draw[thick] (P6-) -- ++(W-) -- ++(V1) --
        ++(V2) -- ++(V3) -- ++(V4) -- ++(V5) -- ++(V6) -- ++(W+) -- ++(W+) --
        ++(V1-) -- ++(V2-) -- ++(V3-) -- ++(V4-) -- ++(V5-) -- ++(V6-) --
        ++(W-) -- cycle; 

        \begin{scope}
        \clip (P6-) -- ++(W-) -- ++(V1) --
        ++(V2) -- ++(V3) -- ++(V4) -- ++(V5) -- ++(V6) -- ++(W+) -- ++(W+) --
        ++(V1-) -- ++(V2-) -- ++(V3-) -- ++(V4-) -- ++(V5-) -- ++(V6-) --
        ++(W-) -- cycle; 
          \foreach \x in {-20,...,40}
          {\draw[blue!40] 
           (-3,.1*\x) -- (3,.1*\x-2);
          }
        \end{scope}

        \filldraw[fill=white, draw=black] (P6-) -- ++(V1) --
        ++(V2) -- ++(V3) -- ++(V4) -- ++(V5) -- ++(V6) -- 
        ++(V1-) -- ++(V2-) -- ++(V3-) -- ++(V4-) -- ++(V5-) -- ++(V6-) --
        cycle; 

        \small
        \draw (origin) ++(W-) node[anchor=north] {$S_k$};

        \scriptsize
        \draw[very thick,blue] (origin) ++(W-) -- ++(V7+);
        \filldraw (origin) circle (.4mm) node[anchor=south west] {$0$};

        \draw[->,>=angle 90,black] (P6-) -- ++(V1);
        \draw (P6-) -- ++(V1) +($-.5*(V1)$) node[anchor=south] {$v_1$};
        \draw[->,>=angle 90,black] (P6-) ++(V1) -- ++(V2); 
        \draw (P6-) ++(V1) ++(V2) +($-.5*(V2)$) +(-0.6,-.15) node[anchor=south] {$v_2$}; 
        \draw[->,>=angle 90,black] (P6-) ++(V1) ++(V2) -- ++(V3);
        \draw[->,>=angle 90,black] (P6-) ++(V1) ++(V2) ++(V3) -- ++(V4);
        \draw[->,>=angle 90,black] (P6-) ++(V1) ++(V2) ++(V3) ++(V4) --
        ++(V5);
        \draw[->,>=angle 90,black] (P6-) ++(V1) ++(V2) ++(V3) ++(V4) ++(V5) -- ++(V6);

        \draw[->,>=angle 90,black] (P6+) -- ++(V1-);
        \draw (P6+) -- ++(V1-) +($-.5*(V1-)$) node[anchor=north] {$-v_1$};
        \draw[->,>=angle 90,black] (P6+) ++(V1-) -- ++(V2-); 
        \draw (P6+) ++(V1-) ++(V2-) +($-.5*(V2-)$) +(0.6,.15)
        node[anchor=north] {$-v_2$};  
        \draw[->,>=angle 90,black] (P6+) ++(V1-) ++(V2-) -- ++(V3-);
        \draw[->,>=angle 90,black] (P6+) ++(V1-) ++(V2-) ++(V3-) -- ++(V4-);
        \draw[->,>=angle 90,black] (P6+) ++(V1-) ++(V2-) ++(V3-) ++(V4-) --
        ++(V5-);
        \draw[->,>=angle 90,black] (P6+) ++(V1-) ++(V2-) ++(V3-) ++(V4-)
        ++(V5-) -- ++(V6-); 

        \draw (P6+) +(0.05,-.3) node[anchor=north] {$v_{k-1}$}; 

        \filldraw (origin) circle (.4mm) node[anchor=south west] {$0$};

        \draw[very thick,blue] (P6-) ++(W-) -- ++(V7+) (P6+) ++(W-) -- ++(V7+);

        \filldraw (P6+) circle (.4mm) node[anchor=south west] {$p_{k-1}$};
        \filldraw (P6+) +(W+) circle (.4mm) node[anchor=south west] {$p_k$};

        \filldraw (P6-) circle (.4mm) node[anchor=north east] {$-p_{k-1}$};
        \filldraw (P6-) +(W-) circle (.4mm) node[anchor=north east] {$-p_k$};

        \draw (rightorigin) ++(P6-) -- ++(W-) -- ++(V1) --
        ++(V2) -- ++(V3) -- ++(V4) -- ++(V5) -- ++(V6) -- ++(W+) -- ++(W+) --
        ++(V1-) -- ++(V2-) -- ++(V3-) -- ++(V4-) -- ++(V5-) -- ++(V6-) --
        ++(W-) -- cycle; 

        \filldraw[fill=white, draw=black] (rightorigin) ++(P6-) ++(W+) -- ++(V1) --
        ++(V2) -- ++(V3) -- ++(V4) -- ++(V5) -- ++(V6) -- 
        ++(V1-) -- ++(V2-) -- ++(V3-) -- ++(V4-) -- ++(V5-) -- ++(V6-) --
        cycle; 

        \draw[black!35] (rightorigin) ++(P6-) ++(W+) -- +(V7-) ++(V1) -- +(V7-) 
        ++(V2) -- +(V7-) ++(V3) -- +(V7-) ++(V4) -- +(V7-) ++(V5) -- +(V7-) ;

        \draw[very thick,blue] (rightorigin) ++(P6-) ++(W-) -- ++(V7+)
        (rightorigin) ++(P6+) ++(W-) -- ++(V7+);

        \small
        \draw (origin) ++(1.5,.3) node {$P_{k-1}$}; 

        \draw (P6-) ++(W-) ++(V1) ++(V2) ++(V3) node[anchor=north west] {$P_k$}; 

        \draw[black!35] (rightorigin) ++(P6-) ++(W+) ++(V6) -- ++(V1) --
        ++(V2) -- ++(V3) -- ++(V4) -- ++(V5);

        \draw[black!35] (rightorigin) ++(P6-) ++(W+) ++(V6) ++(V1) -- +(V6-) 
        ++(V2) -- +(V6-) ++(V3) -- +(V6-) ++(V4) -- +(V6-) ;

        \draw[black!35] (rightorigin) ++(P6-) ++(W+) ++(V6) ++(V5) -- ++(V1) --
        ++(V2) -- ++(V3) -- ++(V4) ;

        \draw[black!35] (rightorigin) ++(P6-) ++(W+) ++(V6) ++(V5) ++(V1) -- +(V5-) 
        ++(V2) -- +(V5-) ++(V3) -- +(V5-) ;

        \draw[black!35] (rightorigin) ++(P6-) ++(W+) ++(V6) ++(V5) ++(V4) --
        ++(V1) -- ++(V2) -- ++(V3) ;

        \draw[black!35] (rightorigin) ++(P6-) ++(W+) ++(V6) ++(V5) ++(V4)
        ++(V1) -- +(V4-) ++(V2) -- +(V4-) ;

        \draw[black!35] (rightorigin) ++(P6-) ++(W+) ++(V6) ++(V5) ++(V4)
        ++(V3) -- ++(V1) -- ++(V2) ;

        \draw[black!35] (rightorigin) ++(P6-) ++(W+) ++(V6) ++(V5) ++(V4)
        ++(V3) ++(V1) -- +(V3-) ;

      \end{tikzpicture}
      \end{center}
      \caption{The polygon $P_k$ expressed as $P_{k-1} + S_k$,
        and tiled by parallelograms $v_i \times v_j$.} 
      \label{polyfig}

    \end{figure}

    The point $p_k$ is the initial point of the path $\partial^- P_{k-1}
    + \set{ v_k / 2}$, which is on the boundary. Similarly, $-p_k$ is the
    initial point of $\partial^+ P_{k-1} + \set{ -v_k/2}$, and conclusion
    \ref{t1} is now clear. 

    Next divide $P_k$ into two regions, one bounded by $\partial
    P_{k-1} + \set{v_k/2}$, and the other bounded by the paths
    $\partial^+ P_{k-1} + \set{v_k/2}$, $\partial^+ P_{k-1} +
    \set{-v_k/2}$, and the two copies of $S_k$. The first region is
    isometric to $P_{k-1}$ and can be tiled by the parallelograms $v_i
    \times v_j$, $i < j < k$, by the induction hypothesis. The second
    region is the sum $\partial^+ P_{k-1} + S_k$, and is tiled by the
    parallelograms $v_i \times v_k$, for all $i < k$, since
    $\partial^+ P_{k-1}$ is a concatenation of the vectors $v_i$, $i <
    k$. \qedhere 

  \end{proof}
  
  Since $P_\alpha(q)$ is $P_k$ for the vectors $\set{m_e v_e \st e
  \text{ appears in }\alpha}$, ordered appropriately, we immediately
  deduce the following from Lemma \ref{tiling}\ref{t2}. 

  \begin{lemma}\label{area-sum}

   For any $\alpha \in \scc$ and $q \in \QD^1(S)$, we have
   \[
     \area(P_\alpha(q)) = \sum_{e, e'} m_em_{e'} \abs{v_e \wedge v_{e'}}, 
   \]
   where the sum is taken over all unordered pairs of distinct saddle
   connections $e, e'$ appearing in $\alpha^q$. \qed

  \end{lemma}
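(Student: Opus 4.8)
The plan is to read off the formula directly from the tiling established in Lemma~\ref{tiling}\ref{t2}. Let $e_1, \dotsc, e_k$ be the distinct saddle connections appearing in $\alpha^q$, and set $v_i = m_{e_i}\,{v}_{e_i}$. Each $m_{e_i}$ is a positive integer, so $v_i$ is a positive scalar multiple of ${v}_{e_i}$; in particular $\theta(v_i) = \theta({v}_{e_i}) \in [0,\pi)$, and after relabeling we may assume $0 \leq \theta(v_1) \leq \dotsb \leq \theta(v_k) < \pi$. Comparing definitions, the polygon $P_\alpha(q) = \set{\sum_i t_i v_i \st -1/2 \leq t_i \leq 1/2}$ is exactly the polygon $P_k$ associated to the ordered tuple $(v_1, \dotsc, v_k)$, so Lemma~\ref{tiling} applies verbatim.

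By Lemma~\ref{tiling}\ref{t2}, $P_k$ is tiled by the parallelograms $v_i \times v_j$, one for each unordered pair $i \neq j$, with distinct tiles having disjoint interiors. Hence their areas sum to the area of $P_k$:
\[
  \area(P_\alpha(q)) \;=\; \sum_{i < j} \area(v_i \times v_j) \;=\; \sum_{i < j} \abs{v_i \wedge v_j} \;=\; \sum_{i < j} m_{e_i} m_{e_j} \abs{{v}_{e_i} \wedge {v}_{e_j}},
\]
which is precisely the claimed identity, the sum ranging over unordered pairs of distinct saddle connections appearing in $\alpha^q$. The one case that deserves an explicit word is $\alpha \in \hcyl(q)$: then all the ${v}_{e_i}$ are parallel, every parallelogram $v_i \times v_j$ degenerates to a segment of zero area, $P_\alpha(q)$ itself degenerates to a line segment, and both sides of the identity vanish, in agreement with Proposition~\ref{polygon}\ref{tw4}.

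I do not anticipate any genuine obstacle here, since all of the geometric content already resides in Lemma~\ref{tiling}. The only point to pin down is that ``tiling'' in Lemma~\ref{tiling}\ref{t2} is understood in the usual sense — the parallelograms cover $P_k$ and have pairwise disjoint interiors — so that passing from ``$P_k$ is tiled by the $v_i \times v_j$'' to ``$\area(P_k) = \sum_{i<j}\area(v_i \times v_j)$'' is just finite additivity of area; this is automatic from the explicit inductive decomposition given in the proof of Lemma~\ref{tiling} and illustrated in Figure~\ref{polyfig}.
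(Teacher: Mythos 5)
Your proof is correct and is exactly the paper's argument: the paper also deduces the formula immediately from Lemma~\ref{tiling}\ref{t2} by observing that $P_\alpha(q)$ is $P_k$ for the vectors $m_e v_e$ ordered by direction, so the parallelogram areas $\abs{m_{e}v_{e} \wedge m_{e'}v_{e'}} = m_e m_{e'}\abs{v_e \wedge v_{e'}}$ sum to $\area(P_\alpha(q))$. Your extra remarks on the degenerate case and on finite additivity are harmless elaborations of the same argument.
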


  \subsection{Rigidity}
   
  A key tool that we use is the following result of Duchin, Leininger, and
  Rafi. Our formulation of the statement is slightly different from theirs, 
  but their proof still works without modification. 
  
  \begin{proposition}[\cite{DLR10}, Lemma 22]\label{rigidity}

    Let $q, q' \in \QD^1(S)$ be half-translation surfaces. If\/
    $\hcyl(q) \subset \hcyl(q')$ then $\SL \cdot q = \SL \cdot
    q'$. \qed 

  \end{proposition}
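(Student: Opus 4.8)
The plan is to follow the argument of Lemma~22 in \cite{DLR10}, using the flat circle $\PMF(q)$ as an intermediary; the point of our reformulation is that $\hcyl(q)$ is precisely the data that argument needs. I begin with two observations. For a curve $\alpha$ and a direction $\theta\in\RP$, the number $i(\nu_q^{\theta},\alpha)$ equals the length of the orthogonal projection of $\alpha^q$ onto the line perpendicular to $\theta$; hence $\alpha\in\hcyl(q)$ exactly when this vanishes for some $\theta$, and such a $\theta$ is then unique, since a geodesic representative lies in only one direction. (Equivalently, by Proposition~\ref{polygon}\ref{tw4}, $\hcyl(q)=\set{\alpha\in\scc\st\area(P_\alpha(q))=0}$, with $P_\alpha(q)$ a segment in direction $\theta$.) In particular, if $[\nu_q^{\theta}]\in\PMF(q)$, then every curve $\alpha$ with $i(\nu_q^{\theta},\alpha)=0$ lies in $\hcyl(q)$. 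Secondly, by Masur's closed-trajectory theorem the periodic directions of $q$ are dense in $\RP$, and for each such direction the core curves of the corresponding cylinders lie in $\hcyl(q)$.

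It suffices to prove $\PMF(q)\subseteq\PMF(q')$. Indeed both sets are compact and are embedded circles -- the map $\theta\mapsto[\nu_q^{\theta}]$ is injective because distinct flat foliations are transverse and so intersect positively -- so an inclusion of one in the other forces $\PMF(q)=\PMF(q')$; and from that equality, the conclusion $\SL\cdot q=\SL\cdot q'$ is exactly Lemma~22 of \cite{DLR10}, whose proof is unchanged by our hypothesis: it reconstructs $q$ up to the $\SL$--action from the parametrised flat circle, using that the transverse pair $(\nu_q^{\theta},\nu_q^{\theta+\pi/2})$, normalised so that $i(\nu_q^{\theta},\nu_q^{\theta+\pi/2})=1$, pins $q$ down up to the diagonal flow. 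To prove the inclusion, note that $\PMF(q')$ is closed and $\theta\mapsto[\nu_q^{\theta}]$ is continuous, so it is enough to treat $\theta$ in the dense set of periodic directions of $q$; fix such a $\theta$, with cylinders $C_1,\dotsc,C_k$ and core curves $\alpha_1,\dotsc,\alpha_k\in\hcyl(q)\subseteq\hcyl(q')$.

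The remaining step is the main obstacle: one must show that $\alpha_1,\dotsc,\alpha_k$ are straight in a common direction $\psi$ on $q'$, that $\nu_{q'}^{\psi}$ arises from a cylinder decomposition of $q'$ carried by these same homotopy classes, and that the cylinder moduli on $q'$ agree with those on $q$ up to scale, so that $[\nu_{q'}^{\psi}]=[\nu_q^{\theta}]\in\PMF(q')$. That the cores become parallel on $q'$ is forced by also feeding into $\hcyl(q')$ the other curves that are straight in direction $\theta$ on $q$ -- curves threading the boundary saddle connections of the $C_j$ and linking consecutive cores, and boundary curves of regular neighbourhoods of such configurations -- and observing that no $q'$--geodesic realisation of the whole picture is possible unless the $\alpha_i$ are mutually disjoint and parallel. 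The genuinely delicate part is the recovery of the transverse measures, since $\hcyl$ records no lengths: here one exploits the density of periodic directions of $q$ accumulating on $\theta$, and the combinatorics of how the $q$--geodesic representatives of all of these straight curves interleave and cut $q$ into flat polygons, transporting that combinatorial data to $q'$ in order to identify its decomposition in direction $\psi$. This is carried out in \cite{DLR10} and their argument applies here verbatim; together with the density reduction and the circle argument above, it gives $\PMF(q)=\PMF(q')$ and hence $\SL\cdot q=\SL\cdot q'$.
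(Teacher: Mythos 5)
You should know at the outset that the paper does not reprove this statement: Proposition \ref{rigidity} is quoted from \cite{DLR10} (their Lemma 22), with only the remark that the formulation in terms of $\hcyl$ differs slightly from theirs but that their proof works without modification. Your write-up ultimately makes the same move: the two steps that carry all the weight --- that the inclusion $\hcyl(q)\subseteq\hcyl(q')$ forces the cores of a $q$--parallel cylinder family to be parallel on $q'$, and that the transverse measures/moduli can be recovered so that the directional foliation classes match --- are each discharged by asserting that the argument of \cite{DLR10} ``applies here verbatim.'' As a citation that is acceptable and coincides with what the paper does; but the scaffolding you add on top of the citation is presented as an argument, and it has a concrete gap.

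Namely, Masur's theorem gives density of directions containing \emph{at least one} cylinder, not density of completely periodic directions. For a direction $\theta$ that is not completely periodic, $\nu_q^\theta$ has minimal components carrying transverse measure, so the homotopy classes of the cylinder curves in direction $\theta$ --- even together with their moduli --- do not determine $[\nu_q^\theta]$, and your proposed certification that $[\nu_{q'}^{\psi}]=[\nu_q^{\theta}]$ by matching ``a cylinder decomposition of $q'$ carried by these same homotopy classes'' is simply not available for a dense set of $\theta$ as claimed (completely periodic directions need not be dense, or even plentiful, on a general half-translation surface). Likewise the pivotal assertion that ``no $q'$--geodesic realisation of the whole picture is possible unless the $\alpha_i$ are mutually disjoint and parallel'' is essentially the content of the lemma being proved, and is given no argument. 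The parts you do carry out are fine --- injectivity of $\theta\mapsto[\nu_q^\theta]$, the observation that an embedded circle contained in an embedded circle is the whole circle, and the step from $\PMF(q)=\PMF(q')$ to $\SL\cdot q=\SL\cdot q'$ (a quadratic differential is determined by its transverse horizontal--vertical pair, and every transverse pair of directional foliations of $q$ is realised on a $\mathrm{GL}^{+}(2,\R)$--image of $q$) --- but these are the easy reductions; the heart of the proposition remains with the citation, exactly as in the paper.
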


  Therefore, if $\SL \cdot q \not= \SL \cdot q'$ then both
  $\hcyl(q) - \hcyl(q')$ and $\hcyl(q') - \hcyl(q)$ are non-empty. 

  \begin{remark}

    One consequence of Proposition \ref{rigidity} is that when $3g-3+p >
    1$, every $q \in \QD^1(S)$ has a crooked curve. This is true
    because $\QD^1(S)$ always has real dimension greater than $3$, and
    hence cannot be a single $\SL$--orbit. 

  \end{remark}
      
  \begin{remark}
   
    Theorem 1 of \cite{DLR10} states that the marked length spectrum of
    simple closed curves determines the half-translation surface $q$.
    Associated to $q$ is the \emph{marked polygonal area spectrum}, which
    is the $\SL$--invariant function $\scc \to \R$ given by $\alpha \mapsto
    \area(P_\alpha(q))$. We observe that, by Proposition
    \ref{polygon}\ref{tw4} and Proposition \ref{rigidity}, the marked
    polygonal area spectrum of $q$ determines its $\SL$--orbit. 

  \end{remark}
      
  \subsection{$\SL$--orbit closures}
  
  The space $\QD^1(S)$ of unit-area half translation structures on $S$ is
  naturally partitioned into strata $\Qk$, where $\kappa$ is a partition of
  $4g - 4$ specifying the orders of the singularities. The mapping class
  group $\mcg(S)$ acts on $\QD^1(S)$ and each stratum $\Qk$ by change of
  marking. The $\SL$--action on $\QD^1(S)$ preserves each stratum, and also
  descends naturally to the moduli space of half-translation surfaces
  $\QMod(S) = \QD^1(S) / \mcg(S)$, as well as each unmarked stratum $\QkMod
  = \Qk / \mcg(S)$. Let $\pi \from \QD^1(S) \to \QMod(S)$ be the natural
  projection, which is an orbifold covering map. Given $q \in \Qk$, let
  $\aim_q$ be the closure of $\pi(\SL \cdot q)$ in $\QkMod$. We call
  $\aim_q$ the \emph{orbit closure} associated to $q$. 
  
  The structure of $\aim_q$ has been elucidated in the work of Eskin,
  Mirzakhani, and Mohammadi, as follows: 

  \begin{theorem}[\cite{EMM15}, Theorem 2.1]\label{EMM}

    For any $q \in \Qk$, the orbit closure $\aim_q$ is an affine
    invariant submanifold of\/ $\QkMod$. \qed

  \end{theorem}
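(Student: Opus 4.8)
The statement is the Eskin--Mirzakhani--Mohammadi orbit closure theorem, and the natural plan is to follow their strategy, which converts this topological assertion into a measure--classification statement together with an equidistribution statement for the $\SL$--action. The first and by far the deepest step is to classify the ergodic probability measures on $\QkMod$ invariant under the upper--triangular subgroup $P \subset \SL$: one shows that every such measure is $\SL$--invariant and \emph{affine}, i.e.\ supported on a suborbifold that is linear in period coordinates and is given there by the restriction of Lebesgue measure to the corresponding subspace. I would obtain this via the Eskin--Mirzakhani measure rigidity argument, adapting the Benoist--Quint exponential drift method to the non--homogeneous setting of a single stratum: one tracks how two nearby $P$--generic points separate under the combined action of the diagonal (geodesic) and unipotent (horocycle) flows, controls the ``extra'' drift directions using quantitative recurrence together with the Lyapunov/growth estimates for the Kontsevich--Zorich cocycle (Forni, Avila--Viana), and concludes that the measure must possess additional invariance, which in this setting is precisely affineness.

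Granting the measure classification, the second step is equidistribution: for every $q$ the orbit $\pi(P \cdot q)$ --- equivalently $\pi(\SL \cdot q)$ --- equidistributes, as one averages over growing pieces of $P$, toward an affine measure $\mu_q$ whose support is precisely the orbit closure $\aim_q$. This requires (a) a non--escape--of--mass / quantitative non--divergence input (in the spirit of Minsky--Weiss) guaranteeing that the relevant time averages place no mass at the cusp, so that their weak-$*$ limits are probability measures; (b) the countability of the set of affine invariant submanifolds of a fixed stratum, which permits an induction on ``complexity''; and (c) a linearization argument ruling out intermediate limit measures, so that the limit is the unique affine measure whose support contains the orbit. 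Combining the two steps gives the conclusion: $\aim_q = \operatorname{supp}(\mu_q)$ is the support of an affine measure, hence an affine invariant submanifold of $\QkMod$, which is Theorem \ref{EMM}.

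I expect the main obstacle to be the measure--classification step. In contrast with the homogeneous--space setting, where Ratner's theorems apply directly, $\QkMod$ is not a homogeneous space, so the exponential--drift argument must be carried out explicitly in period coordinates rather than deduced from a structure theory of subgroups. The two principal difficulties are the non--compactness of strata --- handled by sharp quantitative non--divergence estimates ensuring that $P$--generic trajectories return to compact sets with controlled frequency --- and the identification of the extra invariance: one must show that the new tangent directions produced by the drift are genuinely tangent to the support of the measure and linear in period coordinates, and it is here that the representation theory of $\SL$ and the fine structure of the Kontsevich--Zorich cocycle are indispensable.
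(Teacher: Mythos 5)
This is a quoted external result: the paper gives no proof of Theorem \ref{EMM}, only the citation to \cite{EMM15} together with the remark that the theorem there is stated for abelian differentials and transfers to quadratic differentials by passing to the canonical orientation double cover. Your outline is a faithful high-level description of the strategy actually used in the cited work --- measure classification for $P$-invariant measures via the exponential drift method of Eskin--Mirzakhani (building on Benoist--Quint and on Forni/Avila--Viana control of the Kontsevich--Zorich cocycle), followed in \cite{EMM15} by quantitative non-divergence, countability of affine invariant submanifolds, and a linearization/avoidance argument yielding equidistribution and hence the orbit closure statement. But what you have written is a roadmap, not a proof: every step you ``grant'' is itself the content of a long and difficult paper, and nothing in your sketch could be checked independently of those sources. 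You also omit the one genuinely local point the authors do address, namely the reduction from the quadratic-differential setting ($\Qk$, $\QkMod$) to the abelian-differential setting in which \cite{EMM15} is stated, via the branched double cover on which $q$ pulls back to the square of an abelian differential. For the purposes of this paper the correct ``proof'' is simply the citation, and if you want to invoke the theorem you should do the same rather than re-derive it.
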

  
  The statement given in \cite{EMM15} actually refers to abelian
  differentials rather than quadratic differentials. However, the result
  applies equally well to the setting of quadratic differentials, by
  considering an appropriate two-fold branched covering of the surface. 

  The precise definition of ``affine invariant submanifold'' is rather
  involved and we shall not repeat it here in its entirety. It
  suffices to note that it includes the following: 

  \begin{itemize}

    \item $\aim_q$ is $\SL$--invariant, 

    \item $\aim_q$ is the image of a properly immersed orbifold
      $f \from N \to \QkMod$. 

  \end{itemize}

  Since $\pi \from \Qk \to \QkMod$ is an orbifold covering, the
  preimage $\pi^{-1} (\aim_q)$ is also the image of a properly
  immersed orbifold (in fact, manifold) of the same dimension as
  $N$. 

  The main conclusion we need to draw from Theorem \ref{EMM} is that
  $\pi^{-1}(\aim_q)$ is locally path connected. Let $\aimup_q$ be the
  connected component of $\pi^{-1}(\aim_q)$ containing $q$. Then local
  path connectedness of $\pi^{-1}(\aim_q)$ implies that $\aimup_q$ is
  open and closed in $\pi^{-1}(\aim_q)$. 

  Now let $\Gamma_q \leq \mcg(S)$ be the (setwise) stabilizer of
  $\aimup_q$, and define 
  \[
    \orb_q = \Gamma_q \cdot (\SL \cdot q).
  \] 

  \begin{lemma}\label{EMMlemma}

    $\aimup_q$ is the closure of\/ $\orb_q$ in $\Qk$. 

  \end{lemma}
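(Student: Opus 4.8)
The plan is to show $\aimup_q$ and $\overline{\orb_q}$ (closure in $\Qk$) contain each other. For one inclusion, observe that $\orb_q = \Gamma_q \cdot (\SL \cdot q) \subset \aimup_q$: indeed $\SL \cdot q \subset \aimup_q$ since $q \in \aimup_q$ and $\aimup_q$ is $\SL$--invariant (being a connected component of the $\SL$--invariant set $\pi^{-1}(\aim_q)$), and $\Gamma_q$ preserves $\aimup_q$ by definition. Since $\aimup_q$ is closed in $\pi^{-1}(\aim_q)$ and $\pi^{-1}(\aim_q)$ is closed in $\Qk$ (it is the preimage of the closed set $\aim_q$ under the continuous map $\pi$), $\aimup_q$ is closed in $\Qk$, hence $\overline{\orb_q} \subset \aimup_q$.

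For the reverse inclusion I would argue that $\overline{\orb_q}$ is an $\SL$--invariant subset of $\Qk$ whose image under $\pi$ is all of $\aim_q$. First, $\pi(\orb_q) = \pi(\SL \cdot q)$ since $\Gamma_q \leq \mcg(S)$ and $\pi$ is the quotient by $\mcg(S)$; because $\pi$ is a covering map it is continuous and — restricted suitably — open, so $\pi(\overline{\orb_q})$ is a closed set containing $\pi(\SL\cdot q)$, and one checks using properness/local structure that $\pi(\overline{\orb_q}) = \overline{\pi(\SL \cdot q)} = \aim_q$. Thus $\overline{\orb_q}$ is a closed, $\SL$--invariant subset of $\pi^{-1}(\aim_q)$ that surjects onto $\aim_q$ and contains $q$.

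The key point is then that $\overline{\orb_q}$ is connected, so that it must coincide with the connected component $\aimup_q$. Connectedness follows because $\SL \cdot q$ is connected (being the continuous image of the connected group $\SL$), $\Gamma_q \cdot (\SL \cdot q)$ is a union of translates of $\SL \cdot q$ all lying in the connected set $\aimup_q$ — and in fact since we have already shown $\orb_q \subset \aimup_q$ with $\aimup_q$ closed, we get $\overline{\orb_q} \subset \aimup_q$ for free, so the real content is the opposite direction: every point of $\aimup_q$ is a limit of points of $\orb_q$. For this I would use that $\pi$ restricted to $\aimup_q$ is a (branched) covering onto $\aim_q$ with deck group contained in the image of $\Gamma_q$ in an appropriate sense: given $x \in \aimup_q$, its image $\pi(x) \in \aim_q = \overline{\pi(\SL\cdot q)}$, so there are $g_n \in \SL$ with $\pi(g_n \cdot q) \to \pi(x)$; lifting this convergent sequence through the covering $\pi|_{\aimup_q}$ and using local path connectedness of $\pi^{-1}(\aim_q)$ to stay in the component $\aimup_q$, one obtains $\gamma_n \in \mcg(S)$ with $\gamma_n \cdot (g_n \cdot q) \to x$ and $\gamma_n \cdot g_n \cdot q \in \aimup_q$; since $\gamma_n g_n \cdot q$ and $g_n \cdot q$ lie in the same $\mcg$--orbit and in $\aimup_q$, the element $\gamma_n$ lies in $\Gamma_q$ (it maps the component $\aimup_q$ into itself, as it carries the point $g_n\cdot q \in \aimup_q$ to a point of $\aimup_q$), whence $\gamma_n g_n \cdot q \in \orb_q$ and $x \in \overline{\orb_q}$.

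The main obstacle is this last lifting argument: making precise that a path or sequence in $\aim_q$ converging to $\pi(x)$ lifts to a sequence converging to $x$ inside the single component $\aimup_q$, and that the covering transformations realizing the lift can be taken in $\Gamma_q$. This is where local path connectedness of $\pi^{-1}(\aim_q)$ — the one consequence we extracted from Theorem \ref{EMM} — is essential: it guarantees $\aimup_q$ is open in $\pi^{-1}(\aim_q)$, so that a short path from $\pi(g_n\cdot q)$ to a nearby point lifts, starting at the point $g_n \cdot q \in \aimup_q$, to a path remaining in $\aimup_q$; patching these lifts and invoking that $\pi$ is a covering on the preimage of a path-connected set then delivers the required $\gamma_n \in \Gamma_q$. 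Everything else is formal manipulation of closures, continuity, and invariance.
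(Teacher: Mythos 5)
Your argument is correct and follows essentially the same route as the paper: one inclusion from closedness of $\aimup_q$ in $\Qk$, the other by approximating a point of $\aimup_q$ by points of $\mcg(S)\cdot(\SL\cdot q)$ and using openness of $\aimup_q$ in $\pi^{-1}(\aim_q)$ (from local path connectedness) to force almost all approximants into $\aimup_q$, hence the mapping classes into $\Gamma_q$. The paper does the approximation upstairs, via $\pi^{-1}(\aim_q)=\overline{\mcg(S)\cdot(\SL\cdot q)}$ (which follows from openness of $\pi$), thereby sidestepping the explicit lifting you flag as the main obstacle; your intermediate ``connected set surjecting onto $\aim_q$'' detour is unnecessary but harmless, since you correctly abandon it.
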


  \begin{proof} 

    Certainly, $\aimup_q$ contains $\orb_q$ and is closed in $\Qk$, so
    it contains the closure of $\orb_q$. Also, $\pi^{-1}(\aim_q)$ is
    the closure of $\mcg(S) \cdot (\SL \cdot q)$. Note that if $g \in
    \mcg(S) - \Gamma_q$ then $g \cdot (\SL \cdot q)$ is contained in
    the component $g \aimup_q$ of $\pi^{-1}(\aim_q)$, which is
    disjoint from $\aimup_q$. Now if $q' \in \aimup_q$ is a limit of a
    sequence of points $q_i \in g_i \cdot (\SL \cdot q)$, the open set
    $\aimup_q$ must contain almost all $q_i$, and therefore $g_i \in
    \Gamma_q$ for almost all $i$, and $q'$ is in the closure of
    $\orb_q$. \qedhere 
    
  \end{proof}
  
  We will also make use of the following finiteness result.
  
  \begin{theorem}[\cite{EMM15}, Theorem 2.2]\label{EMMfinite}
   
   Any closed $\SL$--invariant subset of\/ $\Qk$ is a finite union
   of\/ $\SL$--orbit closures. \qed 
   
  \end{theorem}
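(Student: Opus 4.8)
The plan is to derive Theorem~\ref{EMMfinite} from Theorem~\ref{EMM} together with the other two main results of \cite{EMM15}: the classification of $\SL$--invariant ergodic probability measures on $\Qk$ (each is the natural Lebesgue-class measure $\mu_{\mathrm N}$ supported on an affine invariant submanifold $\mathrm N \subseteq \Qk$), and the fact that these natural measures are finite and form a uniformly tight family (no escape of mass). Let $X \subseteq \Qk$ be closed and $\SL$--invariant. The first step is to observe that $X$ is a union of orbit closures, each of which is an affine invariant submanifold: for $q \in X$ the set $\overline{\SL \cdot q}$ is closed and $\SL$--invariant, hence contained in $X$, and it is an affine invariant submanifold of $\Qk$ by Theorem~\ref{EMM} together with the local path connectedness of preimages under $\pi$ discussed before Lemma~\ref{EMMlemma}. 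Conversely, by the measure classification every affine invariant submanifold $\mathrm N \subseteq X$ is itself an $\SL$--orbit closure, since its natural measure is ergodic with full support, so $\SL$--generic points of $\mathrm N$ have dense orbit. Thus $X = \bigcup_{q \in X} \overline{\SL \cdot q}$, and it suffices to extract a finite subfamily of these orbit closures whose union is still $X$.

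Next I would pass to maximal pieces. Affine invariant submanifolds are properly immersed submanifolds, and a closed subset of the same dimension inside a connected one must be the whole thing; hence $\mathrm N_1 \subsetneq \mathrm N_2$ forces $\dim \mathrm N_1 < \dim \mathrm N_2$. Since $\dim \Qk < \infty$, strictly increasing chains of affine invariant submanifolds are finite, so each orbit closure $\overline{\SL \cdot q}$ with $q \in X$ is contained in one that is maximal among affine invariant submanifolds contained in $X$. Let $\{\mathrm N_i\}_{i \in I}$ be this set of maximal pieces; by the measure classification remark above each $\mathrm N_i = \overline{\SL \cdot q_i}$ for a suitable $q_i$, and $X = \bigcup_{i \in I} \mathrm N_i$. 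It remains to show that $I$ is finite.

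Suppose for contradiction that $\mathrm N_1, \mathrm N_2, \dots$ are infinitely many distinct maximal pieces. Each carries its natural $\SL$--invariant probability measure $\mu_i$, supported in the closed set $X$. By uniform tightness a subsequence of $(\mu_i)$ converges weakly to an $\SL$--invariant probability measure $\mu$, again supported in $X$ (the portmanteau theorem gives $\mu(X) \geq \limsup_i \mu_i(X) = 1$), and by the measure classification $\mu = \mu_{\mathrm N}$ for some affine invariant submanifold $\mathrm N \subseteq X$. The crux — and the step I expect to be the main obstacle — is an \emph{absorption lemma}: $\mathrm N_i \subseteq \mathrm N$ for all sufficiently large $i$. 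Hausdorff convergence of the underlying sets is by itself too weak for this, since nearby affine invariant submanifolds of equal dimension need not be nested (consider rotating linear subspaces); one must instead exploit the rigidity supplied by the measure classification together with the local structure of affine invariant submanifolds as real-linear subspaces in period coordinates — near an $\SL$--generic point of $\mathrm N$ the densities of $\mu_i$ converge to that of $\mu$, which should force the linear subspaces cutting out the $\mathrm N_i$ to be eventually contained in the one cutting out $\mathrm N$. Granting the absorption lemma, the maximality of each $\mathrm N_i$ in $X$, combined with $\mathrm N_i \subseteq \mathrm N \subseteq X$, forces $\mathrm N_i = \mathrm N$ for all large $i$, contradicting their distinctness. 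Therefore $I$ is finite and $X = \bigcup_{i \in I} \overline{\SL \cdot q_i}$ is a finite union of $\SL$--orbit closures.
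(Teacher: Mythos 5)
This statement is not proved in the paper at all: it is quoted directly from \cite{EMM15} (their Theorem 2.2), with the \qed indicating that the proof is by citation. So the only question is whether your sketch genuinely establishes the result, and it does not. Your outline in fact reproduces the skeleton of Eskin--Mirzakhani--Mohammadi's own argument: write a closed $\SL$--invariant set as a union of orbit closures (affine invariant submanifolds by Theorem \ref{EMM}), pass to maximal ones via finiteness of dimension, and rule out infinitely many maximal pieces by a compactness argument on their affine measures. But the step you yourself single out as the crux --- the ``absorption lemma'' that if the affine measures $\mu_i$ of distinct submanifolds converge weakly to $\mu_{\mathrm N}$ then $\mathrm N_i \subseteq \mathrm N$ for all large $i$ --- is a genuine gap, and not a small one: together with the ``no escape of mass'' assertion you invoke (that a weak limit of the $\mu_i$ is still a probability measure), it is precisely Theorem 2.3 of \cite{EMM15}, the isolation/equidistribution theorem whose proof occupies the bulk of that paper. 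It is of a completely different order of difficulty from the rest of your sketch, and it does not follow from measure classification plus tightness: weak-$*$ convergence gives no control of ``densities'' (when $\mathrm N_i \not\subseteq \mathrm N$ the measures $\mu_i$ and $\mu_{\mathrm N}$ are mutually singular), and affine invariant submanifolds of equal dimension can be pairwise disjoint while equidistributing toward a strictly larger one, so the gestured argument via period-coordinate linear subspaces cannot be completed by soft means. As written, your proof replaces one citation to \cite{EMM15} by another, with the decisive step left unproven; the honest course, and the one the paper takes, is simply to cite \cite{EMM15}.

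Two secondary points. First, the measure classification you use (every ergodic $\SL$--invariant probability measure is the affine measure of an affine invariant submanifold) is the content of the companion paper of Eskin--Mirzakhani, not of \cite{EMM15} itself, and your claim that every affine invariant submanifold contained in $X$ is an orbit closure is fine granted ergodicity of its affine measure. Second, all of this measure theory lives on the unmarked strata $\QkMod$ (the marked stratum $\Qk$ carries no $\SL$--invariant probability measures), so your argument should be run downstairs and then transported through $\pi$ as in Lemma \ref{EMMlemma}; this is a presentational issue, not the substantive gap.
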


  \subsection{Veech surfaces}
  
  Recall that $q\in \QD(S)$ is a \emph{Veech surface} if its group of
  affine automorphisms is a lattice in $\SL$. We now state several
  characterizations of Veech surfaces due to Smillie and Weiss
  \cite{SW10}, which builds on work of Vorobets \cite{Vor96}. By a
  \emph{triangle} on $q$, we mean a Euclidean triangle on $q$ with
  isometrically embedded interior, whose sides are saddle connections
  on $q$. Let $\hol(q)$ be the set of holonomy vectors arising from
  saddle connections on $q$.
  
  \begin{theorem}[\cite{SW10}]\label{nst}

   For any $q \in \Qk$, the following are equivalent.

    \begin{enumerate}[label=\textup{(\roman*)}]
      \item $q$ is a Veech surface, \label{sw1} 
      \item $q$ has no small triangles: there is a lower bound $\epsilon >
        0$ on the areas of all triangles on $q$, 
      \item $q$ has no small virtual triangles: there exists $\epsilon > 0$
        such that $\abs{{u} \wedge {v}} > \epsilon$ for all pairs
        of non-parallel holonomy vectors $ {u}, {v} \in
        \hol(q)$, \label{sw3} 
      \item the virtual triangle area spectrum $\VT(q) = \set{\abs{{u}
            \wedge {v}} \st {u}, {v} \in 
        \hol(q)}$ is discrete, \label{sw3a}
      \item $\pi(\SL \cdot q)$ is closed in $\QkMod$. \label{sw4} 
        \qed
   \end{enumerate}

  \end{theorem}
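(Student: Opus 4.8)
The plan is to prove the five conditions equivalent via the cycle $(i)\Rightarrow(iv)\Rightarrow(iii)\Rightarrow(ii)\Rightarrow(i)$ together with $(i)\Leftrightarrow(v)$, reconstructing the arguments of Vorobets \cite{Vor96} and Smillie--Weiss \cite{SW10}. Three of the links are essentially formal and I would dispatch them first. Note that $0\in\VT(q)$ trivially, and that $\VT$ is an $\SL$--invariant of $q$, since $\abs{Au\wedge Av}=\abs{u\wedge v}$ for $A\in\SL$; the latter is what makes conditions $(ii)$--$(iv)$ invariant along the orbit. For $(iv)\Rightarrow(iii)$: if $\VT(q)$ is discrete then some interval $(0,\epsilon)$ is disjoint from it, so $\abs{u\wedge v}\ge\epsilon$ for every non-parallel pair of holonomy vectors. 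For $(iii)\Rightarrow(ii)$: a triangle on $q$ has non-parallel saddle-connection side holonomies $u,v$ and area $\tfrac12\abs{u\wedge v}$, so a gap $\epsilon$ for $\VT(q)$ forces every triangle to have area at least $\tfrac12\epsilon$. For $(i)\Rightarrow(iv)$: on a Veech surface every saddle-connection direction is a cylinder (parabolic) direction, so $\hol(q)$ is a finite union of $\Gamma$--orbits of discrete collinear subsets of $\R^2$, where $\Gamma\le\SL$ is the Veech group; since $\Gamma$ preserves the wedge pairing and $\hol(q)$ is itself discrete in $\R^2$, a short computation shows the set of values $\abs{u\wedge v}\le M$ is finite for every $M$, i.e.\ $\VT(q)$ is discrete.

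The equivalence $(i)\Leftrightarrow(v)$ is Smillie's closed--orbit theorem, which I would prove following \cite{SW10}. For $(i)\Rightarrow(v)$: the orbit map $\SL\to\QkMod$ factors through a continuous injection $\SL/\Gamma\hookrightarrow\QkMod$; when $\Gamma$ is a lattice this map is proper, because a sequence leaving every compact set of the finite--volume space $\SL/\Gamma$ must enter a cusp, forcing a saddle connection to shrink and hence leaving every systole--bounded--below part of $\QkMod$, so the image is closed. For $(v)\Rightarrow(i)$: if the orbit is closed then $\SL/\Gamma$ embeds as a properly embedded, $\SL$--invariant subset of the finite--measure space $\QkMod$, and recurrence of the Teichm\"uller geodesic flow on $\QkMod$ forces recurrence of the geodesic flow on $\SL/\Gamma$ --- which for a Fuchsian group $\Gamma$ fails unless $\Gamma$ has finite covolume.

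The crux is $(ii)\Rightarrow(i)$, and I would argue it in contrapositive form through $(i)\Leftrightarrow(v)$: assuming $q$ is not Veech, so $\pi(\SL\cdot q)$ is not closed in $\QkMod$, I would produce triangles of arbitrarily small area. The first step is the reformulation of ``no small virtual triangles'' as a uniform statement along the orbit: there is an $\epsilon>0$ such that on \emph{every} $A\cdot q$ all saddle connections of length $<\epsilon$ are mutually parallel. (Two short non-parallel ones would have wedge $<\epsilon^2$; conversely a virtual triangle with $\abs{u\wedge v}<\delta$ can be sheared and then scaled to two non-parallel saddle connections of length $<\sqrt\delta$.) Failure of closedness produces a sequence $A_n\in\SL$ that is unbounded in $\SL/\Gamma$ but along which $A_n\cdot q$ stays systole--bounded--below in $\QkMod$; the content of \cite{Vor96,SW10} is that, because the directions of short saddle connections on a non--Veech surface are \emph{not} confined to finitely many parabolic lines, such an unbounded--in--$\SL$ but bounded--in--$\QkMod$ excursion must at some scale involve two non-parallel short saddle connections --- a virtual triangle of area tending to zero. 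One then converts these into honest triangles by a Euclidean shortening argument inside the thin parallelogram they span (the same lemma gives $(ii)\Rightarrow(iii)$), and since triangle area $=\tfrac12\abs{u\wedge v}$ is $\SL$--invariant the small triangles descend to $q$ itself.

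I expect the main obstacle to be precisely this last step: making rigorous that non--latticeness of the Veech group forces a genuinely two--dimensional collapse along the orbit --- two independent shrinking saddle connections --- rather than only the one--dimensional cylinder collapses that occur at the cusps of a genuine lattice quotient. This is the heart of the Vorobets and Smillie--Weiss arguments and would dominate a complete proof; the accompanying lemma upgrading a pair of nearly parallel saddle connections to an embedded small--area triangle requires care but is routine Euclidean geometry.
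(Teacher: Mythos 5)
You should first note that the paper does not prove this statement at all: Theorem~\ref{nst} is imported verbatim from Smillie--Weiss \cite{SW10} (building on Vorobets \cite{Vor96}) and used as a black box, which is why it carries a \qed with no argument. So your proposal is not an alternative route to the paper's proof but a reconstruction of the external one. Your organization is sensible: $(iv)\Rightarrow(iii)\Rightarrow(ii)$ are indeed formal (discreteness of $\VT(q)$ plus $0\in\VT(q)$ gives a gap; a nondegenerate triangle has two non-parallel saddle-connection sides $u,v$ and area $\tfrac12\abs{u\wedge v}$), and reducing $(i)\Rightarrow(iv)$ to the structure of $\hol(q)$ as finitely many $\Gamma$--orbits of discrete collinear sets is the right starting point --- though the ``short computation'' is itself a real lemma of \cite{SW10}: one must show uniform discreteness of $\Gamma\cdot v$ and control the orbit points lying in a thin strip about a fixed line, since the wedge is only invariant under the diagonal action.

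There are two genuine gaps. First, your argument for $(v)\Rightarrow(i)$ fails as stated: a closed $\SL$--orbit has measure zero in $\Qk/\mcg(S)$, so Poincar\'e recurrence of the Teichm\"uller geodesic flow for the Masur--Veech measure says nothing about recurrence of the flow \emph{restricted to that orbit}. Smillie's theorem actually requires quantitative nondivergence (Minsky--Weiss/Kleinbock--Margulis type estimates) to show that geodesic trajectories on the closed orbit spend a definite fraction of time in a fixed compact set, and it is this that forces $\Gamma$ to have finite covolume. Second, and as you yourself flag, the implication $(ii)\Rightarrow(i)$ --- extracting two non-parallel shrinking saddle connections from the failure of latticeness, rather than the one-dimensional degenerations seen at cusps --- is left at the level of ``the content of \cite{Vor96,SW10} is that\ldots''. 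That step is the entire substance of the theorem; without it the proposal is a correct map of which implications are formal and which are deep, but not a proof. Given that the authors themselves treat this as a citation, the honest options are either to cite it as they do or to supply the nondivergence argument in full.
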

  
  Note that condition \ref{sw4} is the same as saying that 
  $\aim_q = \pi(\SL \cdot q)$. Applying Lemma \ref{EMMlemma}, we deduce
  the following.
  
  \begin{corollary}\label{veechcor}
  
    A half-translation surface $q$ is a Veech surface if and only
    if\/ $\aimup_q = \orb_q$. \qed
    
  \end{corollary}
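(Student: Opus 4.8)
The plan is to unwind the definitions and chain together Theorem \ref{nst}\ref{sw4} and Lemma \ref{EMMlemma}. Recall that $\orb_q = \Gamma_q \cdot (\SL \cdot q)$ where $\Gamma_q \leq \mcg(S)$ is the setwise stabilizer of $\aimup_q$, and that $\aimup_q$ is the connected component of $\pi^{-1}(\aim_q)$ containing $q$. By Lemma \ref{EMMlemma}, $\aimup_q$ is precisely the closure of $\orb_q$ in $\Qk$. So the equality $\aimup_q = \orb_q$ is equivalent to saying that $\orb_q$ is already closed in $\Qk$.

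First I would prove the forward direction. Suppose $q$ is a Veech surface. By Theorem \ref{nst}, condition \ref{sw4} holds, i.e.\ $\pi(\SL \cdot q)$ is closed in $\QkMod$, which (as noted in the excerpt) means $\aim_q = \pi(\SL \cdot q)$. Taking preimages under the orbifold covering $\pi$, we get $\pi^{-1}(\aim_q) = \pi^{-1}(\pi(\SL \cdot q)) = \mcg(S) \cdot (\SL \cdot q)$, which is therefore closed in $\Qk$. Now $\aimup_q$ is one connected component of this set; since $\pi^{-1}(\aim_q)$ is locally path connected (the main conclusion drawn from Theorem \ref{EMM}), each component is open and closed in $\pi^{-1}(\aim_q)$, hence $\aimup_q$ is closed in $\Qk$. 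It then suffices to check $\aimup_q = \orb_q$ as sets. The inclusion $\orb_q \subseteq \aimup_q$ is immediate. For the reverse, take $q' \in \aimup_q \subseteq \pi^{-1}(\aim_q) = \mcg(S) \cdot (\SL \cdot q)$, so $q' = g \cdot q''$ for some $g \in \mcg(S)$ and $q'' \in \SL \cdot q$; since $q' \in \aimup_q$ and, for $g \notin \Gamma_q$, the translate $g \cdot (\SL \cdot q)$ lies in the component $g\aimup_q$ disjoint from $\aimup_q$ (as in the proof of Lemma \ref{EMMlemma}), we must have $g \in \Gamma_q$, so $q' \in \Gamma_q \cdot (\SL \cdot q) = \orb_q$.

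For the converse, suppose $\aimup_q = \orb_q$. Then by Lemma \ref{EMMlemma}, $\orb_q$ equals its own closure in $\Qk$, i.e.\ $\orb_q$ is closed. I want to deduce that $\pi(\SL \cdot q)$ is closed in $\QkMod$, so that Theorem \ref{nst}\ref{sw4} $\Rightarrow$\ref{sw1} gives the conclusion. Observe that $\pi(\orb_q) = \pi(\SL \cdot q)$, since $\Gamma_q \leq \mcg(S)$ and $\pi$ quotients by all of $\mcg(S)$. Now $\mcg(S) \cdot \orb_q = \mcg(S) \cdot (\SL \cdot q)$ is the full $\pi$-preimage of $\pi(\SL \cdot q)$, and I claim this is closed: it is the union of the $\mcg(S)$--translates of the closed set $\orb_q$, and this union is closed because $\pi^{-1}(\aim_q)$ is locally path connected with each $g\aimup_q = g \cdot \overline{\orb_q}$ a component—a convergent sequence in $\mcg(S) \cdot \orb_q$ eventually lies in a single component $g\aimup_q = g\cdot \orb_q$, which is closed, so the limit lies there too. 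Since $\pi$ is a covering (in particular a quotient map), a set in $\QkMod$ is closed iff its preimage is closed; hence $\pi(\SL \cdot q)$ is closed, as desired.

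I expect the only delicate point to be the bookkeeping with connected components in the converse direction—specifically the argument that a $\mcg(S)$--orbit of a single closed component of a locally path connected space is again closed. This is essentially the same local-finiteness-of-components argument already used in the proof of Lemma \ref{EMMlemma}, so it should be stated briefly by reference rather than re-derived. Everything else is a formal manipulation of the covering map $\pi$ and the definitions of $\aim_q$, $\aimup_q$, $\Gamma_q$, and $\orb_q$.
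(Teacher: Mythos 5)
Your proof is correct and follows essentially the same route as the paper, which derives the corollary directly by combining Theorem \ref{nst}\ref{sw4} (Veech $\Leftrightarrow$ $\pi(\SL\cdot q)$ closed, i.e.\ $\aim_q = \pi(\SL\cdot q)$) with Lemma \ref{EMMlemma} and the component/stabilizer bookkeeping already used in that lemma's proof. The only implicit steps in your converse direction (the limit of a sequence in $\mcg(S)\cdot\orb_q$ lies in the closed set $\pi^{-1}(\aim_q)$, and every component of $\pi^{-1}(\aim_q)$ meeting the orbit is a translate $g\aimup_q$) are immediate from what is already established, so the argument stands as written.
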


\section{Continuity results}

  \label{sec:continuity}

  \subsection{Continuity of the auxiliary polygon}

  Here we show that the polygon $P_\alpha(q)$ is continuous in $q$,
  with respect to the Hausdorff topology in the plane. First we recall
  some basic notions from convex geometry. See, for instance, Sections
  1.7 and 1.8 of \cite{Sch14}. 

  For any non-empty compact convex subset $K \subset \R^2$, the
  \emph{support function} $h_K \from S^1 \to \R$ is defined by 
  \[ 
    h_K(u) = \sup \set{\langle x, u\rangle \st x \in K}.
  \]
  Here $\langle \, \param \, , \param \, \rangle$ is the usual
  inner product on $\R^2$. The \emph{width function} $w_K \from S^1
  \to \R$ is defined by 
  \[
    w_K(u) = h_K(u) + h_K(-u).
  \]
  Note that $w_K$ is even, and descends to a function on $\RP$ which we
  also denote by $w_K$. Now let $d_H$ denote Hausdorff distance. We have
  the following standard fact: 

  \begin{lemma}[\cite{Sch14},Lemma~1.8.14]\label{hausdorff}

    Suppose $K$ and $L$ are non-empty compact convex subsets of\/ 
    $\R^2$. Then  
    \begin{equation}
      d_H(K,L) = \sup_{u \in S^1} \abs{h_K(u) - h_L(u)}. \tag*{\qed}
    \end{equation}

  \end{lemma}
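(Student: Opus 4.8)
The plan is to express the Hausdorff distance in terms of Minkowski sums with the closed unit disc $B \subset \R^2$, and then translate the defining inclusions into pointwise inequalities between support functions. Recall that $d_H(K,L)$ is the infimum of those $\epsilon \geq 0$ for which both $K \subseteq L + \epsilon B$ and $L \subseteq K + \epsilon B$ hold; since $K$ and $L$ are compact, $L + \epsilon B$ and $K + \epsilon B$ are again compact convex, and the infimum will turn out to be attained.

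First I would record three elementary facts about support functions of non-empty compact convex sets $A, A' \subset \R^2$: (i) $A \subseteq A'$ if and only if $h_A(u) \leq h_{A'}(u)$ for all $u \in S^1$ --- the forward implication is immediate from the definition, and the converse uses the separating hyperplane theorem, since any point not in the compact convex set $A'$ is strictly separated from it by some unit vector $u$; (ii) $h_{A+A'} = h_A + h_{A'}$, directly from the definition of the supremum over a Minkowski sum; and (iii) $h_{\epsilon B}(u) = \epsilon \norm{u} = \epsilon$ for $u \in S^1$ and $\epsilon \geq 0$. All three are standard (see \cite{Sch14}, Section 1.7).

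Combining (i)--(iii), the inclusion $K \subseteq L + \epsilon B$ is equivalent to $h_K(u) \leq h_L(u) + \epsilon$ for every $u \in S^1$, that is, to $\sup_{u \in S^1}\bigl(h_K(u) - h_L(u)\bigr) \leq \epsilon$; symmetrically, $L \subseteq K + \epsilon B$ is equivalent to $\sup_{u \in S^1}\bigl(h_L(u) - h_K(u)\bigr) \leq \epsilon$. The two inclusions hold simultaneously precisely when $\sup_{u \in S^1} \abs{h_K(u) - h_L(u)} \leq \epsilon$, so taking the infimum over admissible $\epsilon$ yields $d_H(K,L) = \sup_{u \in S^1} \abs{h_K(u) - h_L(u)}$. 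The only point needing a word of care is that the right-hand side is itself an admissible $\epsilon$: since $h_K$ and $h_L$ are continuous and $S^1$ is compact, $\epsilon_0 := \sup_{u \in S^1}\abs{h_K(u) - h_L(u)}$ is finite, the inequalities $h_K \leq h_L + \epsilon_0$ and $h_L \leq h_K + \epsilon_0$ hold pointwise, and hence by (i) both inclusions hold with $\epsilon = \epsilon_0$ --- so the infimum equals $\epsilon_0$ and not merely a bound for it. There is no genuine obstacle; essentially all the content is in fact (i), which is the one place the convexity hypothesis is used.
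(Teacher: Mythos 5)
Your proof is correct; the paper does not prove this lemma at all but simply cites it as Lemma~1.8.14 of \cite{Sch14}, and your argument (Hausdorff distance via Minkowski sums with the unit disc, plus monotonicity and additivity of support functions) is precisely the standard proof given in that reference. Nothing further is needed.
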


  Next consider \emph{centrally symmetric} convex sets: these are
  convex sets $K$ such that $K = -K$. Note that the auxiliary polygons
  $P_\alpha(q)$ are both convex and centrally symmetric. If $K$ is
  centrally symmetric then $h_K(u) = h_K(-u)$ for all $u$, and
  therefore 
  \begin{equation}\label{heightwidth}
    w_K = 2h_K.
  \end{equation}

  Now let $\norm{f}$ denote the sup norm for functions $f \from \RP
  \to \R$, and let $\cont(\RP,\R)$ be the space of continuous
  functions, with the sup metric. Let $\convexspace$ be the space of
  non-empty centrally symmetric compact convex sets in $\R^2$, with
  the Hausdorff metric. The next lemma follows directly from Lemma
  \ref{hausdorff} and equation \eqref{heightwidth}. 

  \begin{lemma}\label{hausdorffwidth}

    If $K, L \in \convexspace$ then 
    \begin{equation} 
      2d_H(K,L) = \sup_{\theta \in \RP} \abs{w_K(\theta) - w_L(\theta)} =
      \norm{w_K - w_L}. \tag*{\qed}
    \end{equation}

  \end{lemma}

  \begin{corollary}\label{embedding}

    The map $W \from \convexspace \to \cont(\RP,\R)$ given by $K \mapsto
    \frac{1}{2}w_K$ is an isometric embedding. \qed

  \end{corollary}

  Let us now return our attention to the auxiliary polygons. 

  \begin{theorem}\label{polygoncontinuity}
    
    The map $\QD^1(S) \times \scc \to \convexspace$ defined by $(q,
    \alpha) \mapsto P_\alpha(q)$ is continuous in the first factor. 

  \end{theorem}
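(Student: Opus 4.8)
The plan is to factor the continuity assertion through the isometric embedding $W \from \convexspace \to \cont(\RP,\R)$ of Corollary \ref{embedding}: it suffices to show that $(q,\alpha) \mapsto \tfrac{1}{2} w_{P_\alpha(q)}$ is continuous in $q$, i.e.\ that for fixed $\alpha$ the map $q \mapsto w_{P_\alpha(q)} \in \cont(\RP,\R)$ is continuous with respect to the sup norm. By Proposition \ref{polygon}\ref{tw2} applied to the rotated surface $e^{i\theta}\cdot q$ (using that $P_\alpha(e^{i\theta}\cdot q) = e^{i\theta}\cdot P_\alpha(q)$, so that the width of $P_\alpha(q)$ in direction $\theta$ equals the width/height of the rotated polygon in the coordinate directions), the width function is identified with an intersection number: concretely, for $\theta \in \RP$ we have $w_{P_\alpha(q)}(\theta) = l^{H}_{\alpha}(e^{-i\theta}\cdot q)$ or equivalently $w_{P_\alpha(q)}(\theta) = i\big(\nu_q^{\frac{\pi}{2}+\theta},\alpha\big)$, matching the description in the introduction. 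Thus the theorem reduces to continuity of the function $q \mapsto \big(\theta \mapsto i(\nu_q^{\frac{\pi}{2}+\theta},\alpha)\big)$ from $\QD^1(S)$ into $\cont(\RP,\R)$.

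First I would recall the two continuity inputs already cited in the Background: the map $\QD^1(S)\times\RP \to \MF(S)$, $(q,\theta)\mapsto \nu_q^\theta$, is continuous \cite{HM79}, and geometric intersection number $i \from \MF(S)\times\MF(S)\to\R$ is continuous \cite{FLP79}; composing, $(q,\theta) \mapsto i(\nu_q^{\frac{\pi}{2}+\theta},\alpha)$ is a continuous function on $\QD^1(S)\times\RP$. The remaining point is to upgrade joint continuity of this two-variable function to continuity of the associated map into the function space $\cont(\RP,\R)$ with the sup metric. This is where compactness of $\RP$ does the work: a standard argument (essentially the exponential-law / uniform-continuity-on-compact-sets statement) shows that if $F \from X \times K \to \R$ is continuous and $K$ is compact, then $x \mapsto F(x,\param)$ is continuous from $X$ into $(\cont(K,\R),\norm{\cdot})$. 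I would give this argument: fix $q_0$ and $\varepsilon > 0$; for each $\theta \in \RP$ choose neighborhoods $U_\theta \ni q_0$ and $V_\theta \ni \theta$ with oscillation of $F$ less than $\varepsilon$; extract a finite subcover $V_{\theta_1},\dots,V_{\theta_n}$ of $\RP$ and intersect the $U_{\theta_j}$ to obtain the desired neighborhood of $q_0$.

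Finally I would assemble the pieces: for a fixed $\alpha$, the composite $q \mapsto w_{P_\alpha(q)}$ is continuous $\QD^1(S) \to \cont(\RP,\R)$ by the previous paragraph, hence $q \mapsto P_\alpha(q)$ is continuous into $\convexspace$ by Corollary \ref{embedding} (equivalently, by Lemma \ref{hausdorffwidth}, since $2\,d_H(P_\alpha(q),P_\alpha(q_0)) = \norm{w_{P_\alpha(q)} - w_{P_\alpha(q_0)}}$). One degenerate-case remark is worth including: when $\alpha \in \hcyl(q)$ the polygon $P_\alpha(q)$ collapses to a segment, but segments are legitimate elements of $\convexspace$ and the width-function description remains valid there, so no separate treatment is needed. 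The main obstacle — and the only genuinely non-formal step — is the passage from joint continuity to continuity into the sup-metric function space; everything else is a direct citation of results already recorded in the excerpt (Proposition \ref{polygon}, Lemma \ref{hausdorffwidth}, Corollary \ref{embedding}) together with the two standard continuity statements about $\nu_q^\theta$ and $i(\param,\param)$.
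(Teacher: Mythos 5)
Your argument is correct and is essentially identical to the paper's own proof: both identify $w_{P_\alpha(q)}(\theta)$ with $i\big(\nu_q^{\frac{\pi}{2}+\theta},\alpha\big)$ via Proposition \ref{polygon}\ref{tw2} and rotation-equivariance, invoke continuity of $(q,\theta)\mapsto\nu_q^\theta$ and of the intersection pairing together with compactness of $\RP$, and then transfer the result to $\convexspace$ through the isometric embedding of Corollary \ref{embedding}. The only difference is that you spell out the standard compactness argument upgrading joint continuity to continuity into $(\cont(\RP,\R),\norm{\cdot})$, which the paper leaves implicit.
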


  \begin{proof}

  Let $\alpha \in \scc$ be fixed. Applying Proposition \ref{polygon},
  we see that 
  \[
    w_{P_\alpha(q)}(\theta) = \width(e^{-i\theta}\cdot P_\alpha(q)) =
    l_\alpha^H(e^{-i\theta}\cdot q) = i\big(\nu_q^{\frac{\pi}{2}+\theta},
    \alpha\big)
  \] 
  for all $q \in \QD^1(S)$ and $\theta \in \RP$. The map $\QD^1(S) \times
  \RP \to \MF(S)$ given by $(q, \theta) \mapsto
  \nu_q^{\frac{\pi}{2}+\theta}$ is continuous. By continuity of
  intersection number on $\MF(S) \times \MF(S)$ and compactness of
  $\RP$, the map $q \mapsto w_{P_\alpha(q)}$ defines a continuous
  function from $\QD^1(S)$ to $\cont(\RP,\R)$. Moreover, its image is
  contained in $W(\convexspace)$, and composing this map with
  $\frac{1}{2}W^{-1}$ yields the function $q \mapsto
  P_\alpha(q)$. This map is continuous by Corollary
  \ref{embedding}. \qedhere  
  
  \end{proof}

  Finally, applying continuity of $\area \from \convexspace \to \R$
  \cite[Theorem~1.8.20]{Sch14} yields the desired result.

  \begin{corollary}\label{areacontinuity}
    
    The function $\area \from \QD^1(S) \times \scc \rightarrow \R_{\geq 0}$
    defined by $\area(q, \alpha) = \area(P_\alpha(q))$ is continuous and
    $\SL$--invariant in the first factor. \qed

  \end{corollary}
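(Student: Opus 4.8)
The plan is to deduce Corollary \ref{areacontinuity} directly from the continuity of $(q,\alpha) \mapsto P_\alpha(q)$ established in Theorem \ref{polygoncontinuity}, combined with the general fact that area (the two-dimensional Lebesgue measure of a compact convex set) is a continuous function on the space of compact convex subsets of $\R^2$ equipped with the Hausdorff metric. Concretely, fix $\alpha \in \scc$; then Theorem \ref{polygoncontinuity} tells us that the map $q \mapsto P_\alpha(q)$ is a continuous map $\QD^1(S) \to \convexspace$, and post-composing with the continuous area functional $\area \from \convexspace \to \R_{\geq 0}$ gives that $q \mapsto \area(P_\alpha(q))$ is continuous. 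This handles the continuity assertion.

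For the continuity of the area functional on $\convexspace$, I would simply cite \cite[Theorem~1.8.20]{Sch14}, which states that volume (here, planar area) is continuous with respect to the Hausdorff metric on compact convex sets. There is one minor wrinkle: the auxiliary polygon $P_\alpha(q)$ may degenerate to a line segment (when $\alpha \in \hcyl(q)$), so $\convexspace$ contains such degenerate sets; but area is still continuous there since it is continuous on all compact convex sets and the degenerate ones simply have area zero, which is consistent with Proposition \ref{polygon}\ref{tw4}. No special treatment is needed.

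For the $\SL$--invariance: recall that the auxiliary polygon construction commutes with $\SL$--deformations, i.e.\ $P_\alpha(A \cdot q) = A \cdot P_\alpha(q)$ for all $A \in \SL$, as noted in the text preceding Proposition \ref{polygon}. Since every $A \in \SL$ has determinant $1$, the linear map $A$ preserves Lebesgue measure on $\R^2$, hence $\area(A \cdot P_\alpha(q)) = \area(P_\alpha(q))$. Therefore $\area(A \cdot q, \alpha) = \area(q, \alpha)$ for all $A \in \SL$, which is precisely the claimed $\SL$--invariance in the first factor.

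I do not anticipate any real obstacle here; the corollary is a routine consequence of the already-proved Theorem \ref{polygoncontinuity} together with a standard citation from convex geometry and the elementary observation that area-preserving linear maps act trivially on areas. The only point requiring a moment's care is making sure the area functional from \cite{Sch14} applies on all of $\convexspace$ including degenerate (lower-dimensional) convex bodies, which it does since singletons and segments are perfectly good compact convex sets of zero area and continuity at such sets follows from the general theorem.
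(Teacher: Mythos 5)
Your proposal is correct and follows exactly the paper's route: compose the continuous map $q \mapsto P_\alpha(q)$ from Theorem \ref{polygoncontinuity} with the area functional on $\convexspace$, citing \cite[Theorem~1.8.20]{Sch14} for its continuity, with $\SL$--invariance coming from $P_\alpha(A\cdot q) = A\cdot P_\alpha(q)$ and $\det A = 1$. Your extra remark about degenerate polygons is a harmless (and accurate) elaboration the paper leaves implicit.
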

  
  \subsection{Continuity of \SL--infimal length}
  
  In this section, we apply continuity of the auxiliary polygon to deduce
  continuity of\/ $\SL$--infimal length.

  Given $K \in \convexspace$, let $r^-(K) = \inf_{u \in S^1} h_K(u)$ and
  $r^+(K) = \sup_{u \in S^1} h_K(u)$. One can show that these two numbers
  coincide with the minimum and maximum distances to the origin of
  points on $\partial K$. Define the \emph{eccentricity} of $K$ 
  to be $\ecc(K) = \frac{r^+(K)}{r^-(K)} \geq 1$. Note that $r^-(K) =
  0$ if and only if $K$ is degenerate. 

  Recall that any matrix $A \in \SL$ has a \emph{singular value
  decomposition} 
  \[
    A \ = \ e^{i\theta_1} \begin{pmatrix}\lambda & 0 \\
    0 & \lambda^{-1} \end{pmatrix} e^{i\theta_2},
  \]
  for some \emph{stretch factor} $\lambda \geq 1$ and $\theta_1,
  \theta_2 \in \R$. Moreover, $\lambda = \sv(A)$ is unique, and
  $\sv(AB) \leq \sv(A)\sv(B)$ and $\sv(A^{-1}) = \sv(A)$ for all $A,B \in
  \SL$. Also note that
  \begin{equation}\label{radiuslambda}
    \sv(A)r^-(K)  \leq    r^+(A \cdot K)  \leq  \sv(A)r^+(K) \quad
    \textrm{and} \quad 
    \frac{r^-(K)}{\sv(A)}  \leq  r^-(A \cdot K)  \leq  \frac{r^+(K)}{\sv(A)}
  \end{equation}
  for all $K \in \convexspace$.
  Since $A$ acts as a $\sv(A)$--Lipschitz map from $\R^2$ to itself, we deduce
  for all $K,L \in \convexspace$ that
  \begin{equation}\label{hdstretch}
    d_H(A \cdot K, A \cdot L) \ \leq \ \sv(A)~ d_H(K,L).
  \end{equation}

  \begin{lemma}\label{ecchaus}

    Fix $r_0 > 0$, and suppose $K,L \in \convexspace$ satisfy $r^-(K),
    r^-(L) > r_0$. Then 
    \[
      \left(1 + \frac{d_H(K,L)}{r_0}\right)^{-2} < \
      \frac{\ecc(K)}{\ecc(L)} \ < \ \left(1 +
        \frac{d_H(K,L)}{r_0}\right)^2. 
    \]

  \end{lemma}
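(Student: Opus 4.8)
The plan is to bound the four quantities $r^\pm(K)$ and $r^\pm(L)$ against each other using only the hypothesis $d_H(K,L) < \infty$ together with the lower bounds $r^-(K), r^-(L) > r_0$, and then combine these bounds to estimate the ratio $\ecc(K)/\ecc(L) = \frac{r^+(K)\,r^-(L)}{r^-(K)\,r^+(L)}$. The starting observation is that, by Lemma \ref{hausdorffwidth} (or directly by Lemma \ref{hausdorff} and \eqref{heightwidth}), $d_H(K,L) = \sup_{u}\abs{h_K(u) - h_L(u)}$, so for every $u \in S^1$ we have $h_L(u) - d_H(K,L) \leq h_K(u) \leq h_L(u) + d_H(K,L)$. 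Taking suprema and infima over $u$ gives $\abs{r^+(K) - r^+(L)} \leq d_H(K,L)$ and $\abs{r^-(K) - r^-(L)} \leq d_H(K,L)$.

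Next I would convert these additive bounds into multiplicative ones, which is where the hypothesis $r^-(K), r^-(L) > r_0$ enters. Write $d = d_H(K,L)$. From $r^+(K) \leq r^+(L) + d$ and $r^-(K) \geq r^-(L) - d$ we get
\[
  \frac{r^+(K)}{r^-(K)} \ \leq \ \frac{r^+(L) + d}{r^-(L) - d},
\]
and since $r^+(L) \geq r^-(L) > r_0$ we may estimate $r^+(L) + d \leq r^+(L)\bigl(1 + \tfrac{d}{r_0}\bigr)$ and $r^-(L) - d \geq r^-(L)\bigl(1 - \tfrac{d}{r_0}\bigr)$ — but $1 - \tfrac{d}{r_0}$ may fail to be positive, so instead I bound the denominator below more carefully by $r^-(L) - d = r^-(L)\bigl(1 - \tfrac{d}{r^-(L)}\bigr) \geq r^-(L)/(1 + \tfrac{d}{r^-(L)}) \geq r^-(L)/(1 + \tfrac{d}{r_0})$, using the elementary inequality $1 - x \geq (1+x)^{-1}$ for $x \in [0,1)$ (and the fact that if $r^-(L) - d \leq 0$ the original quantity $r^-(K) \geq r^-(L) - d$ is vacuous and we instead use $r^-(K) > r_0$ directly). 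Combining these yields $\ecc(K) \leq \ecc(L)\bigl(1 + \tfrac{d}{r_0}\bigr)^2$, i.e. $\ecc(K)/\ecc(L) < \bigl(1 + \tfrac{d}{r_0}\bigr)^2$. The lower bound follows by interchanging the roles of $K$ and $L$, which gives $\ecc(L)/\ecc(K) < \bigl(1 + \tfrac{d}{r_0}\bigr)^2$, equivalent to $\ecc(K)/\ecc(L) > \bigl(1 + \tfrac{d}{r_0}\bigr)^{-2}$.

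The only delicate point is making the strict inequalities honest: when $d = 0$ both bounds collapse to the (true) equality $\ecc(K) = \ecc(L)$, yet the statement asserts strict inequality. I would handle this by noting that if $d = 0$ then $K = L$ (Hausdorff distance zero forces equality for compact sets), and the claimed strict inequalities $1 < \ecc(K)/\ecc(L) \cdot \text{(nothing)}$ — wait, at $d=0$ the bounds read $1 < \ecc(K)/\ecc(L) < 1$, which is false — so in fact the hypothesis implicitly assumes $K \neq L$, or the lemma is only used with $d > 0$; I would simply observe that for $d > 0$ the estimates $r^+(L) + d < r^+(L)(1 + d/r_0)$ are strict because $r^+(L) > r_0$ strictly (from $r^-(L) > r_0$), so all intermediate inequalities are strict and the final bounds are strict as stated. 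The main obstacle, such as it is, is bookkeeping: keeping track of which of the four radii appears in each numerator and denominator and ensuring the denominators stay positive, rather than any genuine mathematical difficulty.
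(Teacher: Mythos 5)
Your opening reductions are fine ($\abs{r^+(K)-r^+(L)} \le d_H(K,L)$ and $\abs{r^-(K)-r^-(L)} \le d_H(K,L)$ follow from the support-function formula), but the step that converts them into the multiplicative bound is broken. You bound $\ecc(K) \le \frac{r^+(L)+d}{r^-(L)-d}$ and then claim $r^-(L)-d \ge r^-(L)/(1+\tfrac{d}{r^-(L)}) \ge r^-(L)/(1+\tfrac{d}{r_0})$, citing the ``elementary inequality'' $1-x \ge (1+x)^{-1}$ for $x \in [0,1)$. That inequality is reversed: $(1-x)(1+x)=1-x^2 \le 1$, so in fact $1-x \le (1+x)^{-1}$. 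The bound you actually need, $r^-(L)-d \ge r^-(L)/(1+\tfrac{d}{r_0})$, is equivalent (for $d>0$) to $d \le r^-(L)-r_0$, which the hypotheses do not give. Nor can the chain be repaired after the first step: with $r_0=1$, $r^+(L)=r^-(L)=1.1$, $d=1$, your intermediate quantity is $\frac{r^+(L)+d}{r^-(L)-d}=21$ while the target $\ecc(L)\bigl(1+\tfrac{d}{r_0}\bigr)^2=4$, so the inequality $\frac{r^+(L)+d}{r^-(L)-d} \le \ecc(L)\bigl(1+\tfrac{d}{r_0}\bigr)^2$ is simply false. The root cause is that by replacing $r^-(K)$ with $r^-(L)-d$ you discard the hypothesis $r^-(K)>r_0$, which is exactly the information needed to control the denominator; your fallback ``use $r^-(K)>r_0$ directly'' is only invoked when $r^-(L)-d\le 0$ and does not cover the problematic regime $0<r^-(L)-d$ small.

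The paper avoids subtraction entirely by pairing the radii the other way: write $\frac{\ecc(K)}{\ecc(L)}=\frac{r^+(K)}{r^+(L)}\cdot\frac{r^-(L)}{r^-(K)}$ and use only the additive upper bounds $r^+(K)\le r^+(L)+D$ and $r^-(L)\le r^-(K)+D$, so that
\[
\frac{\ecc(K)}{\ecc(L)} \ \le \ \Bigl(1+\frac{D}{r^+(L)}\Bigr)\Bigl(1+\frac{D}{r^-(K)}\Bigr) \ < \ \Bigl(1+\frac{D}{r_0}\Bigr)^2,
\]
since both denominators $r^+(L)\ge r^-(L)>r_0$ and $r^-(K)>r_0$ are controlled by hypothesis; the lower bound follows by symmetry. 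If you rewrite your argument with this pairing, the case analysis and the false inequality disappear. (Your side remark about strictness at $d=0$ is a genuine but harmless degeneracy of the statement — there $K=L$ and the ratio equals $1$ — and it is immaterial in the lemma's application, where the bounds are only compared against $(1+\epsilon)^{\pm 2}$.)
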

   
  \begin{proof}

    Let $D = d_H(K,L)$. Applying Lemma \ref{hausdorff} gives
    \[ 
      \frac{\ecc(K)}{\ecc(L)} \ = \
      \frac{r^+(K)}{r^+(L)}\frac{r^-(L)}{r^-(K)} \ \leq \
      \left(\frac{r^+(L) + D}{r^+(L)}\right)\left(\frac{r^-(K) + 
      D}{r^-(K)}\right) \ < \ \left(1 + \frac{D}{r_0}\right)^2.
    \] 
    The other bound can be deduced similarly. \qedhere
   
  \end{proof}
  
  Define the \emph{eccentricity} of a curve $\alpha \in \scc$ on $q \in
  \QD^1(S)$ to be $\ecc_\alpha(q) = \ecc(P_\alpha(q))$. Note that
  $\ecc_\alpha(q) = \infty$ if and only if $\alpha \in \hcyl(q)$. By
  Theorem \ref{polygoncontinuity}, $\ecc_\alpha$ is continuous on the set
  of quadratic differentials on which $\alpha$ is crooked.
   
  \begin{lemma}\label{lambdagrowth}
  
    Let $\alpha$ be a crooked curve on $q \in \QD^1(S)$, and suppose the
    \SL--infimal length of $\alpha$ is attained at $m \in \SL\cdot q$.
    There is a constant $c \geq 1$ (independent of\/ $S$, $q$, and
    $\alpha$) such that
    \[
      \frac{\sv(A)^2}{c} \ \leq \ \ecc_\alpha(A \cdot m) \ \leq \
      c\sv(A)^2
    \]
    for all $A \in \SL$. In particular, $\ecc_\alpha(m) \leq c$.
  
  \end{lemma}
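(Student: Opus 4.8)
The plan is to reduce everything to two facts: the auxiliary polygon transforms equivariantly under $\SL$, i.e. $P_\alpha(A\cdot m) = A\cdot P_\alpha(m)$, and the radius functions $r^\pm$ satisfy the sandwich estimates \eqref{radiuslambda} with respect to the stretch factor $\sv(A)$. Writing $K = P_\alpha(m)$, we have $\ecc_\alpha(A\cdot m) = \ecc(A\cdot K) = r^+(A\cdot K)/r^-(A\cdot K)$. Applying the upper bound for $r^+(A\cdot K)$ and the lower bound for $r^-(A\cdot K)$ from \eqref{radiuslambda} gives
\[
  \ecc(A\cdot K) \ \leq \ \frac{\sv(A)\,r^+(K)}{r^-(K)/\sv(A)} \ = \ \sv(A)^2\,\ecc(K),
\]
and applying the other two bounds gives $\ecc(A\cdot K) \geq \sv(A)^2\ecc(K)^{-1}\cdot \big(r^+(K)/r^-(K)\big)\cdot\big(r^-(K)/r^+(K)\big)$; more carefully, from $r^+(A\cdot K)\geq \sv(A)\,r^-(K)$ and $r^-(A\cdot K)\leq r^+(K)/\sv(A)$ we get $\ecc(A\cdot K)\geq \sv(A)^2\, r^-(K)/r^+(K) = \sv(A)^2/\ecc(K)$. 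So the lemma follows provided $\ecc(K) = \ecc_\alpha(m)$ is bounded above by a universal constant $c$.

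Thus the heart of the matter is the final sentence, $\ecc_\alpha(m)\leq c$, and this is where the hypothesis that $m$ \emph{attains} the $\SL$--infimal length is essential — it is the only place that hypothesis is used. The idea is that if $P_\alpha(m)$ were very eccentric, one could apply a diagonal matrix $\mathrm{diag}(\mu,\mu^{-1})$ in the appropriate direction to make it ``rounder'' and strictly decrease its perimeter, contradicting minimality of $l_\alpha(m) = \tfrac12\,\mathrm{perimeter}(P_\alpha(m))$ by Proposition \ref{polygon}\ref{tw1}. To make this quantitative I would work with the inscribed and circumscribed radii $r^- = r^-(K)$, $r^+ = r^+(K)$: the perimeter of any $K\in\convexspace$ is at least $4r^-$ (it must enclose a disk of radius $r^-$... actually at least $2\pi r^-$, but $4r^-$ suffices) and at least $4r^+$ is false, but the perimeter is at least $2r^+\cdot 2 = 4r^+$ since $K$ contains a segment of length $2r^+$ through the origin, hence its boundary, traversed as in our convention, has length at least $4r^+$. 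Meanwhile, after rotating so that the diameter direction of $K$ is horizontal and applying $g_t = \mathrm{diag}(e^{-t}, e^{t})$, the set $g_t\cdot K$ has $\width \leq e^{-t}\,\width(K) \approx 2e^{-t}r^+$ and $\height \leq e^{t}\,\height(K)$; choosing $t$ so that $e^{-t}r^+ \asymp e^{t}r^-$, i.e. $e^{2t}\asymp r^+/r^- = \ecc(K)$, balances the two and makes both the width and height of $g_t\cdot K$ of order $\sqrt{r^+ r^-}$, so its perimeter is $O(\sqrt{r^+r^-})$.

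Comparing, minimality of the perimeter at $m$ forces $4r^+ \leq \mathrm{perimeter}(K) \leq \mathrm{perimeter}(g_t\cdot K) + o(1)$; but actually the clean inequality is $\mathrm{perimeter}(K)\leq \mathrm{perimeter}(g_t\cdot K)$ directly, and $\mathrm{perimeter}(g_t\cdot K) \leq 2(\width + \height)(g_t\cdot K) \leq 2(e^{-t}\,\width(K) + e^t\,\height(K))$. Since $\width(K), \height(K) \leq 2r^+$ and, choosing $t = \tfrac12\log\ecc(K)$, we have $e^{-t}\cdot 2r^+ + e^t\cdot 2r^+ $ — which is not yet small, so instead I would bound $\height(K)\leq 2r^+$ crudely but also note $\height(g_t\cdot K) = e^t\,\height(K)$ could still be large unless $K$'s narrow direction coincides with its short one. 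The correct statement is that $K$ contains a disk of radius $r^-$ and lies in a disk of radius $r^+$, so after a rotation taking the long axis horizontal, $\width(g_t\cdot K)\leq 2e^{-t}r^+$ and $\height(g_t\cdot K)\leq 2e^{t}r^+$; this does not use $r^-$, so the perimeter bound gives nothing. The fix: use that $g_t\cdot K$ still contains a disk of radius $e^{-t}r^-$ along the long axis and of radius... — I would instead invoke that for any origin-centered $K$, $\mathrm{perimeter}(K)$ is comparable to $r^+(K)$ alone (both $\geq 4r^+$ and $\leq 8r^+$ up to the traversal convention), and then observe that by \eqref{radiuslambda}, $r^+(g_t\cdot K)$ can be made as small as $\sqrt{r^+(K)\,r^-(K)}$ by a suitable $g_t$ (balancing $\sv(g_t)r^-(K)$ against $r^+(K)/\sv(g_t)$). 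Hence $4\,r^+(K) \leq \mathrm{perimeter}(K)\leq \mathrm{perimeter}(g_t\cdot K)\leq 8\,r^+(g_t\cdot K) \leq 8\sqrt{r^+(K)\,r^-(K)}$, giving $r^+(K)\leq 4\,r^-(K)$ and hence $\ecc_\alpha(m) = \ecc(K)\leq 16$. One then sets $c$ to be the resulting universal constant (taking the max with $1$); this is independent of $S$, $q$, $\alpha$ since every estimate was purely planar and convex-geometric. The main obstacle is getting the comparison constants in ``$\mathrm{perimeter}(K)\asymp r^+(K)$'' and in the balancing step $r^+(g_t\cdot K)\asymp\sqrt{r^+r^-}$ exactly right so that the final contradiction is genuine; everything else is bookkeeping with \eqref{radiuslambda} and Proposition \ref{polygon}.
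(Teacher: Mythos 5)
Your overall route is genuinely different from the paper's. The paper disposes of the key point $\ecc_\alpha(m)\leq c$ by citing Lemma 6.4 and Corollary 7.2 of \cite{TW15}, which supply a matrix $A_0$ with $\ecc_\alpha(A_0\cdot m)\leq 2$ and $\sv(A_0)<c'$ for a universal $c'$; the general inequality then follows, exactly as in your first paragraph, from equivariance $P_\alpha(A\cdot q)=A\cdot P_\alpha(q)$, submultiplicativity of $\sv$, and \eqref{radiuslambda}. You instead try to prove $\ecc_\alpha(m)\leq c$ from scratch by a perimeter-minimization argument, using Proposition \ref{polygon}\ref{tw1} to identify $2l_\alpha(A\cdot m)$ with $\mathrm{perimeter}(A\cdot P_\alpha(m))$ and the two-sided comparison $4r^+(K)\leq\mathrm{perimeter}(K)\leq 2\pi r^+(K)$ for centrally symmetric $K$. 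That is a legitimate, self-contained replacement for the citation, and the reduction and the final chain of inequalities are sound.

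There is, however, one step whose justification as written is wrong: the claim that ``by \eqref{radiuslambda}, $r^+(g_t\cdot K)$ can be made as small as $\sqrt{r^+(K)\,r^-(K)}$.'' The inequalities \eqref{radiuslambda} cannot give this: they provide the \emph{lower} bound $r^+(A\cdot K)\geq\sv(A)\,r^-(K)$ and an upper bound $\sv(A)\,r^+(K)$ that only grows with $\sv(A)$; nothing there forces $r^+(A\cdot K)$ to be small, and indeed for a badly oriented $g_t$ it is not. The claim is true, but needs the geometric input you circled around earlier in the paragraph: if $u_0$ realizes $r^-(K)=h_K(u_0)$, then by central symmetry $K$ lies in the strip $\abs{\langle x,u_0\rangle}\leq r^-$ as well as in the disk of radius $r^+$, hence (after rotating $u_0$ to the vertical) in the rectangle $[-r^+,r^+]\times[-r^-,r^-]$. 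Applying $g_t=\mathrm{diag}(e^{-t},e^{t})$ with $e^{2t}=r^+/r^-$ carries this rectangle to a square of side $2\sqrt{r^+r^-}$, so $r^+(g_tR\cdot K)\leq\sqrt{2}\,\sqrt{r^+r^-}$. With that substitution your chain $4r^+\leq\mathrm{perimeter}(K)\leq\mathrm{perimeter}(g_tR\cdot K)\leq 2\pi\, r^+(g_tR\cdot K)$ closes and yields $\ecc_\alpha(m)\leq\pi^2/2$ (your quoted constants $4$ and $16$ are off, but any universal constant suffices). Once this step is repaired the argument is complete.
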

  
  \begin{proof}
  
    By Lemma 6.4 and Corollary 7.2 of \cite{TW15}, there exists $A_0 \in
    \SL$ such that $\ecc_\alpha(A_0 \cdot m) \leq 2$ and $\sv(A_0) < c'$
    for a constant $c'$ independent of $S$, $q$, and
    $\alpha$. Taking $c \geq 2(c')^2$, the result 
    follows by applying the inequalities in
    (\ref{radiuslambda}), with $K = P_\alpha (A_0 \cdot m)$. \qedhere 
  
  \end{proof}
  
  \begin{lemma}\label{fellow}
   
    Let $\alpha$ be a crooked curve on $q \in \QD^1(S)$. Then for all
    $\epsilon > 0$ and $\lambda_0 \geq 1$, there exists an open
    neighborhood $U \subseteq \QD^1(S)$ of $q$ such that for all $q'
    \in U$ and $A \in \SL$ satisfying $\sv(A) \leq \lambda_0$, we have:
    \begin{enumerate}[label=\textup{(\roman*)}]
      \item $(1+\epsilon)^{-2} < \frac{\ecc_\alpha(A \cdot
        q')}{\ecc_\alpha(A \cdot q)} < (1 + \epsilon)^2$, \label{f2}
      \item $|l_\alpha(A \cdot q) - l_\alpha(A \cdot q')| < \pi
        \epsilon$. \label{f3} 
    \end{enumerate}
   
  \end{lemma}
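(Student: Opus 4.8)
The plan is to reduce both statements to continuity results already established, using Lemma \ref{lambdagrowth} to control the stretch factors that can arise and the compactness of the relevant parameter space to pass from pointwise to uniform estimates. The key observation is that although $A$ ranges over an unbounded set, the constraint $\sv(A) \leq \lambda_0$ together with the singular value decomposition $A = e^{i\theta_1}\mathrm{diag}(\lambda,\lambda^{-1})e^{i\theta_2}$ means that $A$ lies in a \emph{compact} subset $K_{\lambda_0} \subset \SL$ (the angles $\theta_1,\theta_2$ live in the circle and $\lambda \in [1,\lambda_0]$). So the real content is a uniform-over-$K_{\lambda_0}$ continuity statement.

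First I would fix $\epsilon$ and $\lambda_0$ and treat part \ref{f3}. By Proposition \ref{polygon}\ref{tw1}, $l_\alpha(q') = \tfrac{1}{2}\mathrm{perimeter}(P_\alpha(q'))$, and perimeter is continuous on $\convexspace$ with respect to the Hausdorff metric (it is a standard mixed-volume/support-function fact, or one can see it directly from Corollary \ref{embedding} since perimeter is continuous in the width function). Hence $q \mapsto l_\alpha(q)$ is continuous on $\QD^1(S)$, by Theorem \ref{polygoncontinuity}. Now consider the map $F \from K_{\lambda_0} \times \QD^1(S) \to \R$, $F(A,q') = l_\alpha(A\cdot q')$; this is continuous, being the composition of the continuous action map $(A,q')\mapsto A\cdot q'$ with the continuous length function. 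Since $K_{\lambda_0}$ is compact, $F$ is uniformly continuous in $q'$ with a modulus independent of $A$: there is a neighborhood $U_1$ of $q$ such that $q' \in U_1$ implies $|F(A,q) - F(A,q')| < \pi\epsilon$ for all $A \in K_{\lambda_0}$. This gives \ref{f3}.

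For part \ref{f2} I would argue similarly but first secure a positive lower bound on $r^-(P_\alpha(A\cdot q))$ so that Lemma \ref{ecchaus} can be applied. Since $\alpha$ is crooked on $q$, it is crooked on $A\cdot q$ for every $A$ (the $\SL$--action preserves crookedness, by Proposition \ref{polygon}\ref{tw4} and the fact that $P_\alpha(A\cdot q) = A\cdot P_\alpha(q)$), so $r^-(P_\alpha(A\cdot q)) > 0$ for each $A$; by continuity of $(A,q')\mapsto P_\alpha(A\cdot q')$ into $\convexspace$ and compactness of $K_{\lambda_0}$, the infimum $r_0 := \inf_{A \in K_{\lambda_0}} r^-(P_\alpha(A\cdot q))$ is attained and hence positive. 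Shrinking $U$ further using uniform continuity of $(A,q')\mapsto P_\alpha(A\cdot q')$ on the compact set $K_{\lambda_0}$, we may assume $d_H(P_\alpha(A\cdot q), P_\alpha(A\cdot q')) < r_0\big((1+\epsilon)^{1/2}-1\big)$ for all $A\in K_{\lambda_0}$ and all $q'\in U$, and that this bound also keeps $r^-(P_\alpha(A\cdot q')) > r_0/2 > 0$ (so the roles of $q$ and $q'$ are symmetric after halving $r_0$ if necessary). Then Lemma \ref{ecchaus}, applied with the pair $P_\alpha(A\cdot q), P_\alpha(A\cdot q')$ and the lower bound $r_0/2$, yields
\[
  (1+\epsilon)^{-2} \ < \ \frac{\ecc_\alpha(A\cdot q')}{\ecc_\alpha(A\cdot q)} \ < \ (1+\epsilon)^2
\]
for all $q'\in U$ and all $A$ with $\sv(A)\leq \lambda_0$, after adjusting the Hausdorff-distance threshold so that $1 + d_H/(r_0/2)$ is bounded by $(1+\epsilon)^{1/2}$. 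Taking $U = U_1 \cap U_2$ with $U_2$ the neighborhood just produced completes the proof.

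The main obstacle is purely bookkeeping: one must be careful that the lower radius bound $r_0$ depends on $\lambda_0$ (and on $q$, $\alpha$) but \emph{not} on $A$, which is exactly what compactness of $K_{\lambda_0}$ delivers; and one must verify that the Hausdorff bound chosen to feed into Lemma \ref{ecchaus} simultaneously (i) controls the eccentricity ratio and (ii) preserves a uniform lower radius bound for $q'$ so that the hypothesis of Lemma \ref{ecchaus} holds symmetrically. Both are routine once the compactness reduction is in place, so there is no genuine difficulty — the lemma is really just "uniform continuity on a compact family of deformations."
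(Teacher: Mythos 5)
Your argument is correct, but it takes a genuinely different route from the paper's. You reduce everything to a soft compactness statement: the set $K_{\lambda_0}=\set{A \in \SL \st \sv(A)\le\lambda_0}$ is compact, so a tube-lemma argument upgrades pointwise continuity of $(A,q')\mapsto P_\alpha(A\cdot q')$ (and of $(A,q')\mapsto l_\alpha(A\cdot q')$) to continuity in $q'$ uniformly over $A\in K_{\lambda_0}$, and you extract the uniform lower radius bound $r_0$ as an attained infimum over the compact set. The paper instead never invokes compactness of the family of matrices: it chooses a single $\delta$-neighborhood $U$ of $q$ using continuity of $q'\mapsto P_\alpha(q')$ at $q$ alone, and then transfers the bound to every $A$ with $\sv(A)\le\lambda_0$ via the equivariance $P_\alpha(A\cdot q')=A\cdot P_\alpha(q')$ together with the explicit Lipschitz estimate \eqref{hdstretch} and the radius inequalities \eqref{radiuslambda}; in particular $r_0$ is written down explicitly as $r^-(P_\alpha(q))/2\lambda_0$ rather than obtained by an extremum argument. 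Both proofs then finish identically: Lemma \ref{ecchaus} for the eccentricity ratio, and Cauchy's formula $\mathrm{perimeter}=\int_0^\pi w_K$ together with Proposition \ref{polygon}\ref{tw1} and Lemma \ref{hausdorffwidth} for the length estimate. Your approach is more generic (it would apply to any continuous family of deformations, with no equivariance available) at the cost of some point-set topology; the paper's is more quantitative and stays entirely within the convex-geometry estimates it has already recorded. One minor bookkeeping remark: to feed Lemma \ref{ecchaus} you only need the Hausdorff distance below $(r_0/2)\,\epsilon$ (so that $1+d_H/(r_0/2)\le 1+\epsilon$); your threshold involving $(1+\epsilon)^{1/2}$ is more conservative than necessary but still yields the stated bound.
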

   
  \begin{proof}
    
    Set $r_0 = \frac{r^-(P_\alpha(q))}{2\lambda_0}$, and choose $\delta <
    \min \set{\frac{r^-(P_\alpha(q))}{2},
      \frac{r_0\epsilon}{\lambda_0}, \frac{\epsilon}{\lambda_0}}$.
    
    By Theorem \ref{polygoncontinuity}, there is an open neighborhood $U$
    of $q$ with $d_H(P_\alpha(q), P_\alpha(q')) < \delta$ for all $q' \in
    U$. Choose any $q' \in U$ and $A \in \SL$ satisfying $\sv(A) \le
    \lambda_0$. Then 
    \[
      d_H(P_\alpha(A \cdot q), P_\alpha(A \cdot q')) < \min
      \set{ r_0 \epsilon, \epsilon}
    \] 
    by \eqref{hdstretch}. Now by \eqref{radiuslambda}, we deduce
    \[
      r^-(P_\alpha(A \cdot q')) \ \geq \
      \frac{r^-(P_\alpha(q'))}{\sv(A)} \ 
      > \ \frac{r^-(P_\alpha(q))-\delta}{\lambda_0} \ > \
      \frac{r^-(P_\alpha(q))}{2\lambda_0} \ = \ r_0. 
    \] 
    Applying Lemma \ref{ecchaus} yields the first claim. For the second
    claim, we use the well-known fact that the perimeter of any convex
    region $K \subset \R^2$ is $\int_0^\pi w_K(u) du$, where $w_K(u)$ is
    the width of $K$ at $u \in S^1$. Set $w_q(u)$ to be the width of
    $P_\alpha(q)$ at $u$. Then using Proposition
    \ref{polygon}\ref{tw1} and Lemma \ref{hausdorffwidth} we have 
    \begin{align*} 
      |l_\alpha(A\cdot q) - l_\alpha(A \cdot q')| \ 
      & = \ \frac{1}{2} \left| \int_0^\pi w_{A\cdot q}(u) - w_{A \cdot
        q'}(u) du \right| \\
      & \le \ \frac{1}{2} \int_0^\pi \left| w_{A\cdot q}(u) - w_{A \cdot
      q'}(u) \right| du \\
      & \le \ \pi d_H(P_\alpha(A\cdot q), P_\alpha(A \cdot q')) \ < \
        \pi \epsilon. \qedhere
    \end{align*}
   
  \end{proof}

  \begin{proposition}\label{infimalcontinuity}
    
    For any curve $\alpha \in \scc$, the function $\lsl_\alpha \from
    \QD^1(S) \rightarrow \R_{\geq 0}$ is continuous and $\SL$--invariant.

  \end{proposition}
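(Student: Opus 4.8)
The plan is to prove continuity of $\lsl_\alpha$ at a fixed $q \in \QD^1(S)$ by splitting into two cases according to whether $\alpha$ is crooked on $q$. $\SL$--invariance is immediate from the definition, since for $A \in \SL$ the orbit $\SL \cdot (A\cdot q)$ equals $\SL \cdot q$. The easy case is when $\alpha \in \hcyl(q)$, i.e.\ $\lsl_\alpha(q) = 0$ by Proposition~\ref{polygon}\ref{tw4} (together with Proposition~\ref{polygon}\ref{tw3}): here I would use upper semicontinuity. For any $q'$ near $q$, Proposition~\ref{polygon}\ref{tw3} gives $\lsl_\alpha(q')^2 \le 8\area(P_\alpha(q'))$, and $\area(P_\alpha(q'))$ is continuous in $q'$ by Corollary~\ref{areacontinuity} with $\area(P_\alpha(q)) = 0$; hence $\lsl_\alpha(q') \to 0 = \lsl_\alpha(q)$. (One should observe separately that $\lsl_\alpha$ is always upper semicontinuous: $l_\alpha(q')$ is continuous in $q'$ for fixed $q'' \in \SL\cdot q$, so the infimum over the orbit is an infimum of continuous functions.)

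The substantial case is when $\alpha$ is crooked on $q$, so $\lsl_\alpha(q) > 0$. Fix $\epsilon \in (0,1)$. By hypothesis the infimum is attained at some $m \in \SL\cdot q$, with $l_\alpha(m) = \lsl_\alpha(q)$, and by Lemma~\ref{lambdagrowth} we have $\ecc_\alpha(m) \le c$. The key point is that to compute $\lsl_\alpha$ near $q$ it suffices to look at a \emph{bounded} part of the $\SL$--orbit: if $A \cdot m$ has $\ecc_\alpha(A\cdot m)$ too large, then $l_\alpha(A\cdot m)$ is much bigger than $\lsl_\alpha(q)$ and cannot compete for the infimum. Concretely, since any centrally symmetric convex $K$ with $\ecc(K) = \ecc$ and perimeter $p$ satisfies a lower bound on $p$ growing with $\ecc$ (one can see this, e.g., from $\area(K) \le r^+(K)\cdot p$ and $\area(K) \ge$ something in terms of $r^-(K)$, or more simply: $p \ge 4 r^+(K) \ge 4 r^-(K)\ecc$, while $\area(K) \le \pi (r^+)^2$ forces... ) — the clean route is: perimeter $\ge 2 \cdot \mathrm{diam} \ge 4 r^+(K)$, and combining with Lemma~\ref{lambdagrowth} applied to $A\cdot m$, once $\sv(A)$ exceeds a threshold $\lambda_0 = \lambda_0(c,\epsilon,\lsl_\alpha(q))$ we get $l_\alpha(A\cdot m) = \tfrac12\,\mathrm{perim}(P_\alpha(A\cdot m)) \ge 2 r^+(P_\alpha(A\cdot m)) \ge 2\sv(A) r^-(P_\alpha(m)) / c$, which grows without bound and in particular exceeds $\lsl_\alpha(q)+1$. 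So for a fixed such $\lambda_0$,
\[
  \lsl_\alpha(q) = \inf_{\sv(A) \le \lambda_0} l_\alpha(A\cdot m).
\]

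Now apply Lemma~\ref{fellow} with this $\lambda_0$ and with $\epsilon$ replaced by $\epsilon/\pi$: there is a neighborhood $U$ of $q$ such that for all $q' \in U$ and all $A \in \SL$ with $\sv(A) \le \lambda_0$, $|l_\alpha(A\cdot q) - l_\alpha(A\cdot q')| < \epsilon$. Writing $m = B \cdot q$ for some $B \in \SL$, the surface $m' := B\cdot q'$ lies in $\SL\cdot q'$, and $q' \in U$ guarantees closeness of $l_\alpha(A\cdot m)$ and $l_\alpha(A\cdot m')$ for $\sv(A)\le\lambda_0$ — but one must be careful, since $B$ itself may have large stretch; the fix is to choose the neighborhood $U$ adapted to $B$, i.e.\ run Lemma~\ref{fellow} at $q$ with test maps of the form $AB$ (whose stretch is $\le \sv(A)\sv(B) \le \lambda_0\sv(B)$), which is still a bounded family. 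With $U$ so chosen, for $q' \in U$ we get $l_\alpha(A\cdot m') < l_\alpha(A\cdot m) + \epsilon$ for every $A$ with $\sv(A)\le\lambda_0$, hence $\lsl_\alpha(q') \le \inf_{\sv(A)\le\lambda_0} l_\alpha(A \cdot m') \le \lsl_\alpha(q) + \epsilon$; the reverse inequality $\lsl_\alpha(q') \ge \lsl_\alpha(q) - \epsilon$ follows symmetrically, after checking (again via the eccentricity-growth estimate, now at $q'$, which is legitimate once $U$ is small enough that $r^-(P_\alpha(q'))$ stays bounded below) that the infimum for $q'$ is also realized within the bounded family $\{\sv(A) \le \lambda_0\}$. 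Letting $\epsilon \to 0$ gives continuity at $q$.

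The main obstacle is the bookkeeping in the last paragraph: making the ``truncate the orbit to a bounded family'' argument uniform enough that the \emph{same} neighborhood $U$ and stretch bound $\lambda_0$ work for both the upper and lower estimates, and handling the a priori unbounded stretch of the matrix $B$ realizing the infimum at $q$. This is purely a matter of choosing the constants in the right order: first fix $c$ from Lemma~\ref{lambdagrowth}, then pick $\lambda_0$ large in terms of $c$ and $\lsl_\alpha(q)$ so that the eccentricity/perimeter bound kicks in, then shrink $U$ using Lemma~\ref{fellow} (applied with the $B$-twisted family) and Theorem~\ref{polygoncontinuity}. No genuinely new idea is needed beyond Lemmas~\ref{lambdagrowth} and~\ref{fellow} and Corollary~\ref{areacontinuity}.
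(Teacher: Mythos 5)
Your proof is correct, and its overall architecture matches the paper's: the case $\alpha \in \hcyl(q)$ is handled identically via Proposition \ref{polygon}\ref{tw3} and Corollary \ref{areacontinuity}, and the crooked case is reduced to a bounded family $\set{\sv(A) \leq \lambda_0}$ on which Lemma \ref{fellow}\ref{f3} gives uniform closeness of lengths. Where you genuinely differ is in how that reduction is achieved. The paper pins down the minimizer $A_2$ for $q'$ by an eccentricity comparison: Lemma \ref{fellow}\ref{f2} transfers $\ecc_\alpha(A_1 \cdot q) \leq c$ to $\ecc_\alpha(A_1 \cdot q') < 4c$, and Lemma \ref{lambdagrowth} applied at the minimizer for $q'$ converts this into $\sv(A_1 A_2^{-1}) \leq 2c$, hence $\sv(A_2) \leq \lambda_0$. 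You instead truncate \emph{both} infima to a common bounded family using the direct growth estimate $l_\alpha(A \cdot m) = \tfrac12 \mathrm{perim}(P_\alpha(A \cdot m)) \geq 2 r^+(P_\alpha(A\cdot m)) \geq 2\sv(A)\, r^-(P_\alpha(m))$, which needs only \eqref{radiuslambda} and a lower bound on $r^-$ that is stable under small Hausdorff perturbation of the polygon. This bypasses Lemma \ref{fellow}\ref{f2} and most of the eccentricity machinery, which is a real simplification; the price is the $B$--conjugation bookkeeping, which you could avoid entirely by running the truncation from the basepoint $m = q$ itself rather than from a minimizer --- your growth bound does not require $m$ to minimize, whereas the paper's Lemma \ref{lambdagrowth} does. (Minor point: attainment of the infimum is not a hypothesis of the proposition, as you suggest; but the paper assumes it tacitly as well, and in your setup it follows from compactness of $\set{A \st \sv(A) \leq \lambda_0}$ together with the growth bound, so nothing is lost.)
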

  
  \begin{proof}

    Fix $\alpha \in \scc$ and $q \in \QD^1(S)$. We now prove that
    $\lsl_\alpha$ is continuous at $q$. In the case where $\alpha \in
    \hcyl(q)$, we have $\lsl_\alpha(q) = 0 = \area(q,\alpha)$. Continuity
    of $\lsl_\alpha$ at $q$ then follows from Proposition
    \ref{polygon}\ref{tw3} and Corollary \ref{areacontinuity}.
    
    Assume $\alpha$ is crooked on $q$. Given any $\epsilon \in (0,1)$,
    we shall show that $|\lsl_\alpha(q) - \lsl_\alpha(q')| < \pi
    \epsilon$ for all $q'$ in a sufficiently small neighborhood of
    $q$. Suppose $\lsl_\alpha(q)$ is attained at $A_1 \cdot q$, for
    some $A_1 \in \SL$. Choose $\lambda_0 \geq 2c\sv(A_1)$,
    where $c$ is the constant from Lemma \ref{lambdagrowth}. Let $U$ be an
    open neighborhood of $q$ which satisfies the conclusion of Lemma
    \ref{fellow}. For $q' \in U$, let $A_2 \in \SL$ be such that
    $\lsl_\alpha(q') = l_\alpha(A_2 \cdot q')$. By Lemmas
    \ref{lambdagrowth} and \ref{fellow}\ref{f2}, we have 
    \[ 
      \ecc_\alpha(A_1 \cdot q') \ < \ (1+\epsilon)^2 \ecc_\alpha(A_1
      \cdot q) \ < \ (1+\epsilon)^2 c \ < \ 4c. 
    \] 
    Applying Lemma \ref{lambdagrowth} with $m = A_2 \cdot q'$, we
    obtain $\lambda(A_1 A_2^{-1})^2 \leq c \cdot\ecc_\alpha (A_1 \cdot
    q')$, and therefore $\sv(A_1A_2^{-1}) \leq 2c$. Thus, 
    \[ 
      \sv(A_2) \ \leq \ \sv(A_2A_1^{-1})\sv(A_1) \ = \
      \sv(A_1A_2^{-1})\sv(A_1) \ \leq \ 2c\sv(A_1) \ \leq \
      \lambda_0.
    \]  
    Finally, applying Lemma \ref{fellow}\ref{f3} yields
    \begin{multline*}
    \lsl_\alpha(q) - \pi \epsilon \ \leq \ l_\alpha(A_2 \cdot q) - \pi
    \epsilon \ < \ l_\alpha(A_2 \cdot q') \ = \ \lsl_\alpha(q') \\
    \leq \ l_\alpha(A_1 \cdot q') \ < \ l_\alpha(A_1 \cdot q) + \pi
    \epsilon \ = \ \lsl_\alpha(q) + \pi \epsilon 
    \end{multline*}
    as required. \qedhere

  \end{proof}
  
\section{The polygonal area spectrum}
  
  We are now ready to prove Theorem \ref{nscc}. The first step is
  Theorem \ref{discreteness}, which says that $\polyspec(q)$ is
  discrete if $q$ is a Veech surface. 
  
  \proof[Proof of Theorem \ref{discreteness}] 
  
  By Theorem \ref{nst}\ref{sw3a}, the virtual triangle area spectrum
  $\VT(q)$ is discrete. For each simple closed curve $\alpha$, 
  $\area(P_{\alpha}(q))$ is a positive integer combination of numbers
  from the set $\VT(q)$, by Lemma \ref{area-sum}. The result follows. 
  \qedhere
  
  \endproof

  Using Proposition \ref{polygon}\ref{tw3}, we obtain: 

  \begin{corollary}

   If $q \in \QD^1(S)$ is a Veech surface then
   \begin{equation}\inf\set{\lsl_\alpha(q) \st \alpha \text{ is
         crooked on } q} > 0.\tag*{\qed}\end{equation} 

  \end{corollary}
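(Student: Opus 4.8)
The plan is to combine Theorem \ref{discreteness} with the left-hand inequality of Proposition \ref{polygon}\ref{tw3}: discreteness of $\polyspec(q)$ forces a gap above zero in the polygonal areas of crooked curves, and this gap transfers to a gap in the $\SL$--infimal length spectrum.

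In more detail, I would first observe that on any half-translation surface there is at least one cylinder, so $\hcyl(q) \neq \emptyset$; for any $\alpha \in \hcyl(q)$, Proposition \ref{polygon}\ref{tw4} gives $\area(P_\alpha(q)) = 0$, hence $0 \in \polyspec(q)$. The same part of Proposition \ref{polygon} shows that the polygonal areas of crooked curves on $q$ are exactly the elements of $\polyspec(q) \cap (0,\infty)$. Since $q$ is a Veech surface, Theorem \ref{discreteness} says $\polyspec(q)$ is discrete in $\R$, so $0$ is an isolated point of $\polyspec(q)$; thus there is $\epsilon > 0$ with $\polyspec(q) \cap (0,\epsilon) = \emptyset$, i.e.\ $\area(P_\alpha(q)) \geq \epsilon$ for every crooked curve $\alpha$ on $q$. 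Applying Proposition \ref{polygon}\ref{tw3} then gives
\[
  \lsl_\alpha(q)^2 \ \geq \ \pi\,\area(P_\alpha(q)) \ \geq \ \pi\epsilon
\]
for every crooked $\alpha$, so the infimum in question is at least $\sqrt{\pi\epsilon} > 0$.

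I do not expect a genuine obstacle here, as the statement is a short corollary of the preceding results. The only point that warrants a moment's care is the step from ``$\polyspec(q)$ is discrete'' to ``there is a gap above zero'', which relies on $0$ actually lying in $\polyspec(q)$ (equivalently, on discreteness being understood in the sense that rules out accumulation at $0$). Alternatively, one can bypass Theorem \ref{discreteness} altogether: if $\alpha$ is crooked on $q$ then $\alpha^q$ traverses two saddle connections $e, e'$ with non-parallel holonomy vectors, and by Lemma \ref{area-sum} together with Theorem \ref{nst}\ref{sw3} the single term $m_e m_{e'}\,\abs{v_e \wedge v_{e'}}$ already bounds $\area(P_\alpha(q))$ below by a constant depending only on $q$; Proposition \ref{polygon}\ref{tw3} then finishes the argument in the same way.
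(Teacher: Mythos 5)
Your main argument is exactly the paper's: the corollary is stated immediately after Theorem \ref{discreteness} with the one-line justification ``Using Proposition \ref{polygon}\ref{tw3}, we obtain,'' i.e.\ discreteness of $\polyspec(q)$ gives a gap above zero in the polygonal areas of crooked curves, and the inequality $\pi\area(P_\alpha(q)) \leq \lsl_\alpha(q)^2$ transfers it to $\SL$--infimal lengths. Your care about why discreteness yields a gap at $0$ (and your alternative via Lemma \ref{area-sum} and Theorem \ref{nst}\ref{sw3}, which sidesteps that point entirely) is a reasonable tightening of the same route, not a different proof.
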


  For the converse, it is worth remarking that the existence of short 
  crooked curves is not an immediately obvious consequence of having small
  virtual triangles. From a given collection of saddle connections on
  $q$ for which $\abs{{u} \wedge {v}}$ can be taken to be arbitrarily
  small, it appears difficult to construct a sequence of saddle
  connections to satisfy the following:
  \begin{itemize}

   \item no two saddle connections intersect (in their interiors),

   \item consecutive saddle connections meet with an angle of at least
     $\pi$ on both sides, 

   \item their concatenation is homotopic to an essential simple
     closed curve. 

  \end{itemize}
  (Small triangles on $q$ are not particularly useful since their sides
  must meet at an angle of less than $\pi$.) In our proof below, the
  auxiliary polygon plays a key role in bypassing this difficulty.

  For $q \in \Qk$, recall that $\orb_q = \Gamma_q \cdot (\SL \cdot q)$
  is a dense subset of $\aimup_q$ in $\Qk$, where $\Gamma_q \leq \mcg(S)$
  is the stabilizer of $\aimup_q$.
  
  \begin{proposition}\label{notveech-dense}

    Suppose $q\in \QD^1(S)$ is not a Veech surface. Then there is a
    number $a > 0$ such that the polygonal area spectrum $\polyspec(q)$
    contains a dense subset of\/ $[0,a]$.

  \end{proposition}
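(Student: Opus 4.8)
The plan is to exploit the rigidity statement (Proposition \ref{rigidity}) together with local path connectedness of the $\SL$--orbit closure, and then promote a single ``witnessing'' crooked curve to a whole family of small-area curves by a continuity argument. First I would use Proposition \ref{rigidity}: since $q$ is not a Veech surface, by Corollary \ref{veechcor} we have $\aimup_q \neq \orb_q$, so there is a point $q_1 \in \aimup_q$ not lying in $\orb_q = \Gamma_q \cdot (\SL \cdot q)$; in particular $\SL \cdot q_1 \neq \SL \cdot q$ (as $\SL$--orbits in $\Qk$). By Proposition \ref{rigidity} and the remark following it, there is a curve $\alpha$ with $\alpha \in \hcyl(q_1) \smallsetminus \hcyl(q)$. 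Then $\alpha$ is crooked on $q$, so $\area(P_\alpha(q)) > 0$, while $\area(P_\alpha(q_1)) = 0$ by Proposition \ref{polygon}\ref{tw4}. Set $a = \area(P_\alpha(q))$.

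Next I would connect $q$ to $q_1$ inside $\aimup_q$ by a path: since $\pi^{-1}(\aim_q)$ is locally path connected (the consequence drawn from Theorem \ref{EMM}) and $\aimup_q$ is its connected component containing $q$, the set $\aimup_q$ is path connected, so choose a path $\gamma \colon [0,1] \to \aimup_q \subset \Qk$ with $\gamma(0) = q$ and $\gamma(1) = q_1$. Now apply Corollary \ref{areacontinuity}: the function $t \mapsto \area(P_\alpha(\gamma(t)))$ is continuous, equals $a$ at $t=0$ and $0$ at $t=1$, so by the intermediate value theorem its image contains all of $[0,a]$. That is, for every $b \in [0,a]$ there exists $t$ with $\area(P_\alpha(\gamma(t))) = b$. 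The point $\gamma(t)$ need not lie in $\SL \cdot q$, but $\gamma(t) \in \aimup_q = \overline{\orb_q}$ by Lemma \ref{EMMlemma}, so we may approximate it: choose $q_n \to \gamma(t)$ with $q_n \in \orb_q$. Writing $q_n = g_n \cdot q_n'$ with $g_n \in \Gamma_q$ and $q_n' \in \SL \cdot q$, the mapping class $g_n$ carries $\alpha$ to a curve $g_n(\alpha)$, and since polygonal area is a topological invariant composed with the $\mcg$--action in the obvious equivariant way, $\area(P_{g_n^{-1}(\alpha)}(q_n')) = \area(P_\alpha(q_n))$, which $\to b$ as $n \to \infty$ by Corollary \ref{areacontinuity} again. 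Each $q_n'$ is $\SL$--equivalent to $q$, and $\area(P_\beta(\param))$ is $\SL$--invariant (Corollary \ref{areacontinuity}), so $g_n^{-1}(\alpha)$ is a crooked curve on $q$ with $\area(P_{g_n^{-1}(\alpha)}(q))$ arbitrarily close to $b$. Letting $b$ range over a countable dense subset of $[0,a]$ and diagonalizing produces a countable family of crooked curves on $q$ whose polygonal areas are dense in $[0,a]$.

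The main obstacle, and the step requiring the most care, is the bookkeeping in the last paragraph: transferring the value $\area(P_\alpha(\gamma(t)))$ from a point $\gamma(t)$ of the orbit \emph{closure} back to the orbit of $q$ itself. This has two ingredients that must be handled cleanly — the $\mcg$--equivariance of the auxiliary polygon construction (so that a curve realizing a given area at a nearby point of $\orb_q$ pulls back to a curve at an $\SL$--translate of $q$, hence to a curve at $q$ with the same area by $\SL$--invariance), and the double approximation (first the IVT value $b$ by $\area$ at $\gamma(t)$, then $\gamma(t)$ by points of $\orb_q$), which is what forces the conclusion to be \emph{density} in $[0,a]$ rather than containment of a full interval. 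Everything else — the existence of the witness $\alpha$, the path-connectedness of $\aimup_q$, and the intermediate value theorem — is a direct assembly of results already established.
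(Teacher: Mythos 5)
Your proposal is correct and follows essentially the same route as the paper's proof: produce a witness curve via Proposition \ref{rigidity}, use connectedness of $\aimup_q$ and continuity of $\area(\,\param\,,\alpha)$ to realize every value in $[0,a]$, then pull back to $q$ via density of $\orb_q$ and invariance under $\SL$ and change of marking. The only (immaterial) difference is that you take $\alpha \in \hcyl(q_1)\smallsetminus\hcyl(q)$ while the paper takes $\alpha \in \hcyl(q)\smallsetminus\hcyl(q')$; both choices are available since Proposition \ref{rigidity} makes both differences non-empty.
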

  
  \proof

    Applying Corollary \ref{veechcor}, we have $\aimup_q \neq \orb_q$
    and so we may choose $q' \in \aimup_q - \orb_q$. Then $\SL \cdot q \neq
    \SL \cdot q'$, and so by Proposition \ref{rigidity}, there exists
    a curve $\alpha \in \hcyl(q) - \hcyl(q')$. By Proposition
    \ref{polygon}\ref{tw4}, we have $\area(q, \alpha) = 0$ and 
    $\area(q', \alpha) = a > 0$. Since $\aimup_q$ is connected and
    $\area(\, \param \, , \alpha)$ is continuous, by Corollary
    \ref{areacontinuity}, we deduce that $[0,a] \subseteq
    \area(\aimup_q, \alpha)$. It follows that $\area(\orb_q, \alpha)$,
    and hence $\area(\orb_q, \scc)$, contains a dense subset of
    $[0,a]$. Finally, the polygonal area spectrum $\polyspec(q) = 
    \area(q, \scc)$ is invariant under $\SL$--deformations and changes 
    of markings, and therefore $\area(q, \scc) = \area(\orb_q,
    \scc)$. \qedhere 

  \endproof
  
  Note that this proof uses only the fact that $\area(\, \param \, , \alpha)$
  is a continuous \SL--invariant function to $\R_{\geq 0}$ which takes
  the value 0 precisely when $\alpha \in \hcyl(q)$. Since the same properties
  hold for $\lsl_\alpha$, we may argue as above to deduce:
  
   \begin{proposition}

   If $q \in \QD^1(S)$ is not a Veech surface then there exists
   $a > 0$ such that the \SL--infimal length spectrum
   $\set{\lsl_\alpha(q) \st \alpha \in \scc}$
   contains a dense subset of \/ $[0,a]$. \qed
  
   \end{proposition}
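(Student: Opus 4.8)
The plan is to mirror the proof of Proposition \ref{notveech-dense} essentially verbatim, replacing $\area(\param, \alpha)$ by $\lsl_\alpha$ throughout. The note following Proposition \ref{notveech-dense} already identifies the three properties of $\area(\param, \alpha)$ that drive the argument: it is a continuous function $\QD^1(S) \to \R_{\geq 0}$, it is $\SL$--invariant, and it vanishes at $q$ precisely when $\alpha \in \hcyl(q)$. For $\lsl_\alpha$, continuity is Proposition \ref{infimalcontinuity}, $\SL$--invariance is also part of Proposition \ref{infimalcontinuity}, and the vanishing criterion follows from Proposition \ref{polygon}\ref{tw3} together with \ref{tw4} (since $\pi \area(P_\alpha(q)) \leq \lsl_\alpha(q)^2 \leq 8 \area(P_\alpha(q))$, we have $\lsl_\alpha(q) = 0$ iff $\area(P_\alpha(q)) = 0$ iff $\alpha \in \hcyl(q)$).

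First I would invoke Corollary \ref{veechcor}: since $q$ is not Veech, $\aimup_q \neq \orb_q$, so there is a point $q' \in \aimup_q - \orb_q$. Then $\SL \cdot q \neq \SL \cdot q'$, so Proposition \ref{rigidity} supplies a curve $\alpha \in \hcyl(q) - \hcyl(q')$. By the vanishing criterion above, $\lsl_\alpha(q) = 0$ while $\lsl_\alpha(q') = a$ for some $a > 0$. Next, since $\aimup_q$ is connected and $\lsl_\alpha$ is continuous, the intermediate value theorem gives $[0,a] \subseteq \lsl_\alpha(\aimup_q)$. Because $\orb_q$ is dense in $\aimup_q$ and $\lsl_\alpha$ is continuous, $\lsl_\alpha(\orb_q)$ is dense in $\lsl_\alpha(\aimup_q) \supseteq [0,a]$, hence dense in $[0,a]$; a fortiori the full spectrum $\set{\lsl_\beta(\orb_q) \st \beta \in \scc}$ contains a dense subset of $[0,a]$. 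Finally, $\lsl_\alpha$ is $\SL$--invariant and manifestly invariant under changes of marking (as $l_\alpha$ transforms by the marking), so the $\SL$--infimal length spectrum of $q$ equals that of every point in $\orb_q$, and therefore contains a dense subset of $[0,a]$.

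There is essentially no obstacle here beyond citing the right statements in the right order, since all the analytic content — continuity of $\lsl_\alpha$ and local path connectedness of the orbit closure — has already been established. The one point that warrants a sentence of care is the mapping-class-group invariance of $\lsl_\alpha$: this is not literally stated as part of Proposition \ref{infimalcontinuity}, but it is immediate because for $\phi \in \mcg(S)$ the length $l_{\phi(\alpha)}(\phi \cdot q) = l_\alpha(q)$, and the $\SL$--orbit of $\phi \cdot q$ is the $\phi$--image of the $\SL$--orbit of $q$, so $\lsl_{\phi(\alpha)}(\phi \cdot q) = \lsl_\alpha(q)$; thus the \emph{spectrum} (as a set) is unchanged, which is all we need. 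Given this, the proof is short enough that I would simply write ``argue exactly as in the proof of Proposition \ref{notveech-dense}, using Proposition \ref{infimalcontinuity} and Proposition \ref{polygon}\ref{tw3} in place of Corollary \ref{areacontinuity} and Proposition \ref{polygon}\ref{tw4},'' which is in fact what the paper does (the statement is given with a \qed and no separate proof).
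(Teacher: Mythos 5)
Your proposal is correct and is exactly the paper's intended argument: the paper proves this proposition by remarking that $\lsl_\alpha$ shares the three relevant properties of $\area(\,\param\,,\alpha)$ (continuity, $\SL$--invariance, and vanishing precisely on $\hcyl(q)$, the last via Proposition \ref{polygon}\ref{tw3} and \ref{tw4}) and then repeating the proof of Proposition \ref{notveech-dense} verbatim. Your extra sentence checking invariance under change of marking is a reasonable bit of added care, not a deviation.
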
   
  
  We conclude this section with a proof of Theorem \ref{finiteness},
  which is restated below. For $a > 0$, recall that
  \begin{eqnarray*}
    \PA(a) &=& \set{q \in \QD^1(S) \st \area(\poly_\alpha(q)) \geq a \text{
    for every crooked curve } \alpha \text{ on } q}, \\
    \IL(a) &=& \set{q \in \QD^1(S) \st \lsl_\alpha(q) \geq a \text{
    for every crooked curve } \alpha \text{ on } q}.
  \end{eqnarray*}
  
  \begin{theorem}
   
   For any $a>0$, the sets $\PA(a)$ and $\IL(a)$ both contain only finitely
   many affine equivalence classes of half-translation surfaces.
   
  \end{theorem}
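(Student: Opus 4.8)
The plan is to show that each of $\PA(a)$ and $\IL(a)$ is a closed $\SL$--invariant subset of $\QD^1(S)$ that is saturated under the $\mcg(S)$--action, and then invoke Theorem \ref{EMMfinite} to conclude that its image in moduli space is a finite union of orbit closures; since an affine equivalence class is precisely an $\SL$--orbit in $\QMod(S)$, and each orbit closure is the closure of a single such orbit, this will give the finiteness of affine equivalence classes. First I would observe that $\PA(a)$ and $\IL(a)$ are both $\SL$--invariant and $\mcg(S)$--invariant. The $\SL$--invariance follows because being crooked is an $\SL$--invariant condition on a curve (Proposition \ref{polygon}\ref{tw4}, together with the fact that $\hcyl(q)$ is $\SL$--invariant), and because $\area(P_\alpha(\param))$ and $\lsl_\alpha(\param)$ are $\SL$--invariant in $q$ by Corollary \ref{areacontinuity} and Proposition \ref{infimalcontinuity}. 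The $\mcg(S)$--invariance follows since $g$ carries the crooked curves on $q$ bijectively to those on $g \cdot q$ and preserves the relevant quantities. Hence both sets descend to $\SL$--invariant subsets of $\QMod(S)$, and their preimages in each stratum $\Qk$ are $\SL$--invariant.

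The main work is to prove that $\PA(a)$ and $\IL(a)$ are \emph{closed} in $\QD^1(S)$. I would prove this by showing that the complement is open: suppose $q \notin \PA(a)$, so there is a crooked curve $\alpha$ on $q$ with $\area(P_\alpha(q)) < a$. By Corollary \ref{areacontinuity}, the function $q' \mapsto \area(P_\alpha(q'))$ is continuous on all of $\QD^1(S)$ (it is defined and continuous even where $\alpha$ degenerates, taking value $0$ there), so there is a neighborhood $U$ of $q$ on which $\area(P_\alpha(q')) < a$. For each $q' \in U$, the curve $\alpha$ either remains crooked — in which case $q' \notin \PA(a)$ immediately — or lies in $\hcyl(q')$; but in the latter case $\area(P_\alpha(q')) = 0 < a$ still exhibits, via $\alpha$, failure of the defining condition. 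Actually one must be slightly careful: the defining condition quantifies only over curves crooked \emph{on} $q'$, so I would instead argue that $\alpha$ stays crooked on a (possibly smaller) neighborhood. Since $\alpha$ is crooked on $q$, we have $\area(P_\alpha(q)) > 0$; by continuity there is a neighborhood $U'$ on which $0 < \area(P_\alpha(q')) < a$, and on $U'$ the curve $\alpha$ is crooked by Proposition \ref{polygon}\ref{tw4} and witnesses $q' \notin \PA(a)$. The identical argument works for $\IL(a)$ using continuity of $\lsl_\alpha$ (Proposition \ref{infimalcontinuity}) and the fact that $\lsl_\alpha(q) > 0$ exactly when $\alpha$ is crooked.

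With closedness established, the preimage of (the moduli-space image of) $\PA(a)$ in each stratum $\Qk$ is a closed $\SL$--invariant set, hence by Theorem \ref{EMMfinite} a finite union of $\SL$--orbit closures $\aim_{q_1}, \dotsc, \aim_{q_n}$. Each $\aim_{q_i}$ is the closure of the single $\SL$--orbit $\pi(\SL \cdot q_i)$, so it contains points from at most\ldots\ rather, I would instead argue directly: if $\PA(a) \cap \Qk$ contained infinitely many affine equivalence classes, pick representatives $q_1, q_2, \dotsc$; their images in $\QkMod$ lie in finitely many orbit closures, so infinitely many lie in a single $\aim_{q_0}$, but an affine invariant submanifold $\aim_{q_0}$ of positive codimension can still contain infinitely many $\SL$--orbits. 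So the cleaner route is: a closed $\SL$--invariant subset of $\QkMod$ which is a \emph{finite} union of orbit closures contains, as its "generic" orbits, exactly those of the $q_i$, and any orbit it contains is either dense in some $\aim_{q_i}$ or contained in a proper affine invariant subvariety of one; iterating the finiteness theorem on these lower-dimensional pieces (which are again closed $\SL$--invariant) and using that dimension strictly drops, one gets an exhaustion by finitely many orbit closures at each level, hence finitely many orbits total. Summing over the finitely many strata $\Qk$ meeting $\PA(a)$ — here one needs that $\QD^1(S)$ has finitely many strata, which is standard — yields the claim. The same argument applies verbatim to $\IL(a)$. The main obstacle I anticipate is the bookkeeping in this last step: deducing finiteness of orbits (not just orbit closures) from Theorem \ref{EMMfinite}, which requires the observation that the boundary $\aim_{q_i} \setminus \pi(\SL \cdot q_i)$ of each orbit closure is again a closed $\SL$--invariant set of strictly smaller dimension, so an induction on dimension closes the argument.
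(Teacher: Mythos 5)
Your invariance and closedness arguments are fine, and in fact they fill in details the paper leaves implicit (the paper simply asserts closedness from Corollary \ref{areacontinuity}; your point that one should shrink the neighborhood so that $\alpha$ \emph{remains crooked}, using Proposition \ref{polygon}\ref{tw4} and positivity of $\area(P_\alpha(q))$, is exactly the right care to take). The gap is in the final step, and your own hesitation there is warranted: a finite union of $\SL$--orbit closures need \emph{not} contain only finitely many orbits --- the entire stratum $\QkMod$ is itself a closed $\SL$--invariant set and a single orbit closure for a generic $q$, yet contains uncountably many orbits. So no amount of bookkeeping can extract finiteness of orbits from Theorem \ref{EMMfinite} alone. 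Moreover the induction you sketch is unsound as stated: if $\pi(\SL\cdot q_0)$ is dense in an orbit closure $\aim_{q_0}$ of dimension greater than $3$, then $\aim_{q_0}\setminus\pi(\SL\cdot q_0)$ is also dense in $\aim_{q_0}$, hence is not a closed $\SL$--invariant set of smaller dimension, and Theorem \ref{EMMfinite} cannot be applied to it.

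The missing ingredient is Theorem \ref{nscc} (for $\IL(a)$ directly, and for $\PA(a)$ via Proposition \ref{polygon}\ref{tw3}, which converts $\area(P_\alpha(q))\geq a$ into $\lsl_\alpha(q)\geq\sqrt{\pi a}$): every element of $\PA(a)$ and of $\IL(a)$ is a \emph{Veech surface}. Each orbit closure $\aim_{q_i}$ in the finite decomposition of $C\cap\Qk$ has its base point $q_i$ in $C\cap\Qk$, hence $q_i$ is Veech, hence by Theorem \ref{nst}\ref{sw4} the orbit $\pi(\SL\cdot q_i)$ is already closed and $\aim_{q_i}$ is a single orbit. This is how the paper concludes; combined with the finiteness of strata it gives finitely many affine equivalence classes. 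Without invoking Theorem \ref{nscc} (or some equivalent input guaranteeing closedness of the individual orbits), the argument does not close.
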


  \proof
  
    Applying Corollary \ref{areacontinuity}, we deduce that $\PA(a)$ is a
    closed and $\SL$--invariant subset of $\QD^1(S)$. Moreover, $\PA(a)$
    is invariant under the action of $\mcg(S)$, and so $\PA(a)$ descends
    to a closed $\SL$--invariant subset $C$ in $\QMod(S)$. It follows that
    $C \cap \Qk$ is closed and $\SL$--invariant in each stratum $\Qk$
    under the subspace topology. By Theorem \ref{EMMfinite}, $C \cap \Qk$
    is a finite union of $\SL$--orbit closures. Since elements of $\PA(a)$
    are necessarily Veech surfaces by Theorem \ref{nscc}, $C \cap \Qk$
    must be a finite union of $\SL$--orbits. Finally, there are only finitely
    many strata for a given genus, and so the desired result follows.
    
    The proof for $\IL(a)$ proceeds identically using Proposition
    \ref{infimalcontinuity} in place of Corollary
    \ref{areacontinuity}.\qedhere 
    
  \endproof
  
\section{The virtual triangle area spectrum} 

  \label{sec:triangles}
  
  We are almost ready to prove Theorem \ref{nsvt}. In the introduction we
  defined the virtual triangle area spectrum $\VT(q)$. Let $\VT_0(q)
  \subset \VT(q)$ be the subset consisting of the numbers $\abs{u \wedge
  v}$ such that $u$ and $v$ are the holonomy vectors of a pair of saddle
  connections with a common endpoint (that is, a \emph{based} virtual
  triangle). Note that saddle connections forming a based virtual triangle
  need not form a triangle, since they may have angle $\pi$ or more on both
  sides. We also define $\VT_0(X) = \bigcup_{q \in X}\VT_0(q)$ for any set
  $X\subset \QD^1(S)$.
  
  We know from the implication $\ref{sw1} \Rightarrow \ref{sw3}$ of Theorem
  \ref{nst} that if $q$ is a Veech surface then $\VT(q)$ has a gap above
  zero. The remainder of Theorem \ref{nsvt} follows from the next
  proposition: 

  \begin{proposition}\label{densetriangles}

    Suppose $q \in \QD^1(S)$ is not a Veech surface. Then there is a
    number $a > 0$ such that $\VT_0(q)$ contains a dense subset of\/
    $[0,a]$. 

  \end{proposition}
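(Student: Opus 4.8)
The plan is to mirror the argument used for Proposition~\ref{notveech-dense}, but now tracking based virtual triangles rather than polygonal areas. Since $q$ is not a Veech surface, Corollary~\ref{veechcor} gives $\aimup_q \neq \orb_q$, so we may pick $q' \in \aimup_q - \orb_q$. Then $\SL \cdot q \neq \SL \cdot q'$, and by Proposition~\ref{rigidity} there is a curve $\alpha \in \hcyl(q) - \hcyl(q')$. By Proposition~\ref{polygon}\ref{tw4}, $\area(P_\alpha(q)) = 0$ while $\area(P_\alpha(q')) = a > 0$ for some $a$. Using continuity of $\area(\param,\alpha)$ (Corollary~\ref{areacontinuity}) and connectedness of $\aimup_q$, we obtain $[0,a] \subseteq \area(\aimup_q,\alpha)$, and since $\orb_q$ is dense in $\aimup_q$, the set $\area(\orb_q,\alpha)$ is dense in $[0,a]$. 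The point is that for each surface $q''$ in $\orb_q$ on which $\alpha$ is crooked, the geodesic $\alpha^{q''}$ uses at least two saddle connections with a common endpoint; by Lemma~\ref{area-sum}, $\area(P_\alpha(q''))$ is a positive-integer combination of numbers $\abs{v_e \wedge v_{e'}}$ over unordered pairs of saddle connections in $\alpha^{q''}$, at least one of which is a \emph{based} pair.

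The key new ingredient, and the main obstacle, is passing from ``$\area(P_\alpha(q''))$ is small'' to ``some single based virtual triangle area $\abs{v_e \wedge v_{e'}} \in \VT_0(q'')$ is small.'' This requires controlling the combinatorial complexity of $\alpha^{q''}$ — specifically, bounding the number of saddle connections appearing in the geodesic representative and bounding the multiplicities $m_e$, uniformly over the relevant surfaces. If $\alpha^{q''}$ uses $N$ saddle connections with multiplicities bounded by $M$, then $\area(P_\alpha(q''))$ is a sum of at most $\binom{N}{2}$ terms each of the form $m_e m_{e'}\abs{v_e\wedge v_{e'}} \le M^2 \abs{v_e \wedge v_{e'}}$, and since at least one pair shares an endpoint, the minimum such based-pair area is at most a fixed multiple of $\area(P_\alpha(q''))$. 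I expect this bound on complexity to come from the Tang--Webb machinery: on the surfaces realizing the $\SL$--infimal configuration for $\alpha$ (equivalently, where $\ecc_\alpha$ is bounded, as in Lemma~\ref{lambdagrowth}), the geodesic $\alpha^{q''}$ has uniformly bounded combinatorics, and the needed estimates should be extractable from \cite{TW15} in the same spirit as Lemma~\ref{lambdagrowth}. One must be slightly careful that the saddle connections forming the small based pair are genuinely on $q''$ and the pair has a common endpoint — but this is automatic since $\alpha^{q''}$ is a concatenation of saddle connections, so consecutive ones in the cyclic word share an endpoint, and two non-parallel consecutive saddle connections exist precisely because $\alpha$ is crooked on $q''$.

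Concretely, the steps are: (1) apply Corollary~\ref{veechcor}, Proposition~\ref{rigidity}, and Proposition~\ref{polygon}\ref{tw4} to produce $\alpha$ and $a>0$ with $[0,a] \subseteq \overline{\area(\orb_q,\alpha)}$; (2) for each $q'' \in \orb_q$ with $\alpha$ crooked, normalize within $\SL \cdot q''$ so that $\ecc_\alpha$ is bounded by a universal constant (Lemma~\ref{lambdagrowth} and its inputs from \cite{TW15}), giving a uniform bound on the number of saddle connections in $\alpha^{q''}$ and on their multiplicities; (3) invoke Lemma~\ref{area-sum} to write $\area(P_\alpha(q'')) = \sum m_e m_{e'}\abs{v_e\wedge v_{e'}}$, observe that consecutive saddle connections in $\alpha^{q''}$ form based virtual triangles, and conclude $\min\{\abs{v_e\wedge v_{e'}} : e,e' \text{ based}\} \le C\cdot\area(P_\alpha(q''))$ for a uniform constant $C$ (also noting such a based pair is non-parallel since $\alpha$ is crooked, so the area is positive); (4) since $\area(P_\alpha(q''))$ takes values dense in $[0,a]$ and these values are controlled above and below by based virtual triangle areas of $q''$, the set $\VT_0(\orb_q)$ is dense in $[0,a/C]$; (5) since $\VT_0$ is $\SL$--invariant and mapping-class-group invariant, $\VT_0(q) = \VT_0(\orb_q)$, completing the proof after renaming $a/C$ as $a$.
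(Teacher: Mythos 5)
Your steps (1), (2), and (5) match the paper's setup, and your upper bound in step (3) is fine --- in fact no complexity bound is needed for it, since every term of the sum in Lemma~\ref{area-sum} is nonnegative, so any single based pair $e,e'$ in $\alpha^{q''}$ satisfies $\abs{v_e\wedge v_{e'}}\leq m_em_{e'}\abs{v_e\wedge v_{e'}}\leq \area(P_\alpha(q''))$. The genuine gap is in step (4). Knowing that for each $q''\in\orb_q$ there is \emph{some} based pair whose area lies between $\area(P_\alpha(q''))/C$ and $\area(P_\alpha(q''))$ does not yield density of $\VT_0(\orb_q)$ in any interval; it only shows that $\VT_0(q)$ accumulates at $0$. (A selection $s\mapsto\psi(s)\in[s/C,s]$, made over a dense set of values $s\in[0,a]$, can have image a discrete set such as $\set{2^{-k}}$.) Accumulation at $0$ does recover the ``gap implies Veech'' direction of Smillie--Weiss, but it is strictly weaker than the stated proposition. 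The underlying difficulty is that ``the minimum based-pair area of $\alpha^{q''}$'' is not a continuous function of $q''$ --- the combinatorics of $\alpha^{q''}$, and hence which pairs are based, change as $q''$ moves through $\aimup_q$ --- so there is no intermediate-value argument available for it, and your proposal does not supply a substitute.

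The paper's proof fills exactly this gap by producing a single continuous function whose values are based virtual triangle areas. One takes a path $q_t$ in $\aimup_q$ from $q$ to $q'$, arranges after restriction and reparametrization that $\area(q_t,\alpha)>0$ for $t>0$ while the saddle connections $e_1,\dotsc,e_k$ of $\alpha^{q_0}$ persist along the whole path, and shows that some consecutive (hence based) pair $e_i,e_{i+1}$ must become non-parallel at some time: otherwise all the $e_i(t)$ stay parallel, the angles between consecutive ones are locked in the discrete set $\pi\Z$, so the concatenation remains a geodesic representative of $\alpha$ and $\area(q_t,\alpha)=0$, a contradiction. Then $\phi_i(t)=\abs{v_i(t)\wedge v_{i+1}(t)}$ is continuous on the open set $U\subset\aimup_q$ where $e_i$ and $e_{i+1}$ persist, takes the values $0$ and $a>0$, so $\phi_i(U)\supseteq[0,a]$; density of $U\cap\orb_q$ in $U$ and continuity of $\phi_i$ then give density of $\phi_i(U\cap\orb_q)\subseteq\VT_0(\orb_q)$ in $[0,a]$. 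This persistence-plus-intermediate-value step is the missing idea; note also that the interval obtained this way comes from $\phi_i(1)$, not from $\area(q',\alpha)$ as in your step (1).
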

  
  \begin{proof} 
  
    As in the proof of Proposition \ref{notveech-dense}, there exist a
    half-translation surface $q' \in \aimup_q - \orb_q$ and a curve
    $\alpha \in \hcyl(q) - \hcyl(q')$, and we know that $\area(q,
    \alpha) = 0$ and $\area(q',\alpha) > 0$. Let $q_t$ be a path in
    $\aimup_q$ from $q_0 = q$ to $q_1 = q'$. Consider the set 
    \[
      \set{t \in [0,1] \st \area(q_t, \alpha) > 0},
    \]
    which is an open neighborhood of $1$ in $[0,1]$ that does not
    contain $0$. It has a connected component $(t_0, 1]$. Replacing 
    the path $q_t$ by its restriction to $[t_0, 1]$ and
    reparametrizing over $[0,1]$, we have $\area(q_0, \alpha) = 0$ and
    $\area(q_t, \alpha) > 0$ for all $t \in (0,1]$. 

    Now consider the geodesic representative $\alpha^{q_0}$ and
    express it as a concatenation of saddle connections $e_1 \dotsm
    e_k$. Let $\alpha_i$ be the topological arc represented by
    $e_i$; it is an isotopy class rel endpoints, where the interior of
    the arc is required to avoid the singularities. Consider the set 
    \[
      \set{t \in [0,1] \st \text{each } \alpha_i \text{ is represented
          by a saddle connection in } q_t}.
    \]
    This set is an open neighborhood of $0$ in $[0,1]$. Again,
    replacing the path $q_t$ by its restriction to an interval $[0,
    \epsilon]$ and reparametrizing over $[0,1]$, we may assume that
    the arcs $\alpha_1, \dotsc, \alpha_k$ are represented by saddle
    connections for all $t \in [0,1]$. Let $e_i(t)$ denote the saddle
    connection in $q_t$ representing $\alpha_i$. Let $v_i(t)$ be the
    holonomy vector of $e_i(t)$. 

    Define functions $\phi_i(t) = \abs{v_i(t) \wedge v_{i+1}(t)}$ for
    each $i$ (with indices taken mod $k$).
    These are continuous because holonomy vectors vary continuously
    where defined. Since all $e_i$ are parallel on $q_0$,
    we have $\phi_i(0) = 0$ for all $i$.
    We claim that $\phi_i(t_1) > 0$ for some $t_1>0$ and some $i$.
    If not, then for all $t \in [0,1]$ the saddle
    connections $e_i(t)$ are all parallel. The angles
    between consecutive saddle connections (on either side) must
    remain constant, since they are constrained to lie in the discrete
    set $\pi\Z$, and therefore the concatenation $e_1(t) \dotsm
    e_k(t)$ remains a geodesic representative for $\alpha$ on $q_t$. 
    But this contradicts the fact that $\area(q_t, \alpha)
    > 0$ for all $t \in (0,1]$. 

    Restricting $q_t$ to $[0,t_1]$ and reparametrizing over $[0,1]$ one
    last time, we have a path $q_t$ in $\aimup_q$ and a pair of saddle
    connections $e_i(t)$, $e_{i+1}(t)$ which persist on $q_t$ throughout
    the path, such that $\phi_i(0) = \abs{v_i(0) \wedge v_{i+1}(0)} = 0$
    and $\phi_i(1) = \abs{v_i(1) \wedge v_{i+1}(1)} = a > 0$.  The function
    $\phi_i$ is defined and continuous on the open set $U \subset \aimup_q$
    where $e_i$ and $e_{i+1}$ persist. This set contains the path $q_t$ and
    hence $\phi_i(U)$ contains $[0,a]$. Since $U \cap \orb_q$ is dense in
    $U$, it follows that $\phi_i(U \cap \orb_q)$, and hence
    $\VT_0(\orb_q)$, contains a dense subset of $[0,a]$. Finally, the based
    virtual triangle area spectrum is invariant under change of marking and
    $\SL$, and so $\VT_0(q) = \VT_0(\orb_q)$. \qedhere

  \end{proof}

  \small

\providecommand{\bysame}{\leavevmode\hbox to3em{\hrulefill}\thinspace}
\providecommand{\MR}{\relax\ifhmode\unskip\space\fi MR }
\providecommand{\MRhref}[2]{%
  \href{http://www.ams.org/mathscinet-getitem?mr=#1}{#2}
}
\providecommand{\href}[2]{#2}

\end{document}